\renewcommand{\int}{\operatorname{int}} 
\newcommand{\un}{\operatorname{u}}
\newcommand{\id}{\operatorname{id}}
\newcommand{\sx}{{\sf{x}}}
\newcommand{\Z}{\mathbb{Z}} 
\newcommand{\N}{\mathbb{N}} 
\newcommand{\R}{\mathbb{R}} 
\newcommand{\F}{\mathbb{F}}
\newcommand{\cA}{\mathcal{A}} 
\newcommand{\cC}{\mathcal{C}}
\newcommand{\cR}{\mathcal{R}}
\newcommand{\cV}{\mathcal{V}} 
\newcommand{\cW}{\mathcal{W}} 
\newcommand{\cZ}{\mathcal{Z}}
\newcommand{\fq}{\operatorname{fq}}
\newcommand{\e}{\epsilon} 
\newcommand{\hra}{\hookrightarrow} 
\newcommand{\imra}{\looparrowright} 
\newcommand{\sra}{\twoheadrightarrow} 
\newcommand{\ra}{\longrightarrow}
\newtheorem*{theorem*}{Theorem}
\newtheorem*{LBT}{4D-Light Bulb Theorem}
\newtheorem*{Thm10}{Theorem 10.5(2)}
\theoremstyle{definition}
\theoremstyle{plain}
\newtheorem{thm}{Theorem}[section]
\newtheorem{cor}[thm]{Corollary}
\newtheorem{lem}[thm]{Lemma}
\newtheorem{prop}[thm]{Proposition}
\theoremstyle{definition}
\newtheorem{defn}[thm]{Definition}
\newtheorem{rem}[thm]{Remark}
\title{Homotopy versus isotopy:\\ spheres with duals in 4--manifolds}
\author[R. Schneiderman]{Rob Schneiderman}
\email{robert.schneiderman@lehman.cuny.edu}
\address{Dept. of Mathematics, Lehman College, City University of New York, Bronx, NY}
\author[P. Teichner]{Peter Teichner}
 \email{teichner@mac.com}
\address{Max-Planck-Institut f\"ur Mathematik, Bonn, Germany}
\begin{document}

\maketitle

\begin{abstract}
Dave Gabai recently proved a smooth $4$-dimensional ``Light Bulb Theorem'' in the absence of 2-torsion in the fundamental group. 
We extend his result to 4--manifolds with arbitrary fundamental group by showing that an invariant of Mike Freedman and Frank Quinn gives the complete obstruction to ``homotopy implies isotopy'' for embedded 2--spheres which have a common geometric dual. The invariant takes values in an $\F_2$-vector space generated by elements of order 2 in the fundamental group and has applications to unknotting numbers and pseudo-isotopy classes of self-diffeomorphisms. Our methods also give an alternative approach to Gabai's theorem using various maneuvers with Whitney disks and a fundamental isotopy between surgeries along dual circles in an orientable surface. 
\end{abstract}



\section{Introduction and results}
We work in the category of smooth manifolds. Our starting point is Gabai's $4$-dimensional LBT \cite[Thm.1.2]{Gab}:
\begin{LBT} Let $M$ be an orientable $4$--manifold such that $\pi_1M$ has no elements of order~$2$.
If $R, R':S^2 \hra M$ are embedded spheres in $M$ which are homotopic and have the same geometric dual, then $R$ is isotopic to $R'$.
\end{LBT}

Here a \emph{geometric dual} to a map $R:S^2\to M$ is an embedded sphere with trivial normal bundle which intersects $R$ transversely and in a single point. 
The necessity of the $\pi_1$-condition was shown by Hannah Schwartz in \cite{Schwartz} and also follows from Theorem~\ref{thm:4d-light-bulb}.

In this paper we extend the above LBT to a version  for arbitrary fundamental groups. Fix  a connected orientable $4$--manifold $M$ and a map $f:S^2\to M$ with geometric dual $G:S^2\hra M$. Consider the following set, measuring ``homotopy modulo isotopy'':
\[
\cR^G_{[f]}:= \{ R: S^2 \hra M \mid R \text{ is homotopic to } f  \text{ and has } G \text{ as geometric dual} \}\slash  \text{isotopies of }R.
\]

Let $\F_2T_M$ be the $\F_2$-vector space with basis $T_M:=\{g\in\pi_1M \mid g^2=1\neq g\}$, the elements of order two (2-torsion). It turns out that the self-intersection invariant for maps $S^3 \imra M^4\times \R^2$ with transverse double points gives a homomorphism $\mu_3:\pi_3M\to \F_2T_M$ (see Lemma~\ref{lem:Wall}).

\begin{thm}\label{thm:4d-light-bulb}
The abelian group $\F_2T_M$ acts transitively on $\cR^G_{[f]}$, and $\cR^G_{[f]}\neq\emptyset$ if and only if Wall's reduced self-intersection invariant $\widetilde\mu(f)$ vanishes.
 The stabilizer of $R\in\cR^G_{[f]}$ is the subgroup $\mu_3(\pi_3M)\leq \F_2T_M$.
 If $R,R':S^2\hra M$ represent the same element in $\cR^G_{[f]}$ and agree near $G$ then they are  isotopic by an isotopy supported away from $G$. 
\end{thm}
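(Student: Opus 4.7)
The plan is to prove the theorem in four stages: construct an $\F_2 T_M$-action on $\cR^G_{[f]}$; establish non-emptiness when $\mu(f)=0$ together with transitivity; identify the stabilizer as $\mu_3(\pi_3 M)$; and upgrade to isotopies supported away from $G$.

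\textbf{Construction of the action.} For $g \in T_M$ represented by a based loop $\gamma$ disjoint from $R \cup G$ in its interior, I form $R \cdot g$ by tubing $R$ with a small parallel copy $G'$ of $G$ (produced using the trivial normal bundle of $G$) along a framed arc tracking $\gamma$. The resulting sphere is embedded, homotopic to $R$ (the tubed-in $G'$ is null-homotopic), and still meets $G$ transversely in a single point. For this to descend to an $\F_2$-action on $T_M$ rather than a $\Z\pi_1 M$-action, I would verify: (a) the isotopy class of $R \cdot g$ is independent of the loop representative modulo $\mu_3(\pi_3 M)$; (b) tubing along $\gamma$ followed by $\gamma^{-1}$ produces an isotopic embedding, so the effect of a general $g \in \pi_1 M$ factors through $g + g^{-1}$, which is $2g = 0$ unless $g^2 = 1$; (c) the action of a non-torsion element lies in the stabilizer. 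These verifications rely on Whitney-disk manipulations and on the fundamental surgery-isotopy on $G$ between parallel copies surgered along dual meridians mentioned in the abstract.

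\textbf{Non-emptiness and transitivity.} If $\mu(f) = 0$, pair self-intersections of a generic representative by Whitney disks and use the Norman trick through $G$ to remove all remaining intersections, yielding an embedded $R \in \cR^G_{[f]}$. For transitivity, given $R, R' \in \cR^G_{[f]}$, choose a generic homotopy $H: S^2 \times I \to M$ between them and remove all singularities of $H$ by tubing into $G \times I$ in the trace. The residual double-point configuration is captured by a Freedman--Quinn-type invariant $\fq(H) \in \F_2 T_M$, and I would check that $R \cdot \fq(H)$ is isotopic to $R'$ in $\cR^G_{[f]}$.

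\textbf{Stabilizer.} If $R \cdot g$ is isotopic to $R$, gluing the tubing construction to the isotopy produces an immersed $S^3 \to M \times \R^2$ whose $\mu_3$-invariant is $g$, so $g \in \mu_3(\pi_3 M)$. Conversely, any $\alpha \in \pi_3 M$ is represented by a generic immersion in $M \times \R^2$; using it to modify the constant isotopy of $R$ produces an isotopy from $R$ to $R \cdot \mu_3(\alpha)$. Hence the stabilizer of $R$ is exactly $\mu_3(\pi_3 M)$, and fixing a basepoint yields the bijection $\cR^G_{[f]} \longleftrightarrow \F_2 T_M / \mu_3(\pi_3 M)$.

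\textbf{Isotopy rel $G$ and main obstacle.} For the final statement, let $\nu G$ be a tubular neighborhood where $R$ and $R'$ agree; in $M \setminus \mathring{\nu G}$ both spheres become properly embedded disks with identical boundary. Rerunning the three preceding stages rel boundary shows that the obstruction to an isotopy fixing $\nu G$ also lies in $\F_2 T_M$ and vanishes by hypothesis. The main obstacle I anticipate is in establishing well-definedness and additivity of the action together with the precise identification of the stabilizer with $\mu_3(\pi_3 M)$; both rest on organizing many Whitney disks and on the explicit dual-circle surgery-isotopy identity on $G$ advertised in the abstract, whose careful implementation in dimension four I expect to be the most delicate part of the proof.
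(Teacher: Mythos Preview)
Your construction of the action has a genuine gap. Tubing $R$ into a \emph{single} parallel copy $G'$ of $G$ along an arc changes the homotopy class: the result represents $[R]\pm[G]\in\pi_2M$, and $[G]$ has no reason to vanish (indeed $\lambda(R,G)=1$). So $R\cdot g$ as you define it does not lie in $\cR^G_{[f]}$, and the claim that ``the tubed-in $G'$ is null-homotopic'' is simply false. The paper's action is built differently: for $t=t_1+\cdots+t_n\in\F_2T_M$ one performs $n$ finger moves on $R$ along the $t_i$ to get a generic $f^t$, then does Whitney moves along a clean collection $\cW^t$ whose sheet choice differs from the inverse-finger-move collection $\cW$ by switching sheets at each negative self-intersection. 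Equivalently (via the Norman sphere description in Section~\ref{sec:norman}), one tubes $f^t$ into \emph{pairs} of oppositely-oriented copies of $G$, and it is this cancellation in pairs that keeps the homotopy class fixed.

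Beyond this, your outline underestimates where the work lies. The well-definedness of $t\cdot R$ --- that is, independence of the choices of pairings, Whitney arcs, and Whitney disks once the sheet choice is fixed --- is the technical heart of the paper (Sections~\ref{sec:preliminaries}--\ref{sec:proof-of-theorem}), and is what the dual-circle surgery isotopy (Lemma~\ref{lem:capped-surface-isotopy}) and Proposition~\ref{prop:common-arcs-w-moves} are for. Your item~(b) is also muddled: the reduction to order-2 elements is not because ``$g+g^{-1}=0$'' in some quotient, but because sheet changes at double points with $g^2\neq 1$ are forced to come in pairs (double sheet changes, Lemma~\ref{lem:double-sheet-change}) and because trivial-group-element sheet changes are absorbed by an accessory-disk rotation (Lemma~\ref{lem:rotate-B4}). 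Your stabilizer argument is close in spirit to Lemmas~\ref{lem:stabilizer-contains-mu-pi3}--\ref{lem:stabilizer}, but note that the ``conversely'' direction needs the Mayer--Vietoris step to push the $\pi_3$-class off $R\cup G$ before forming the connected sum.
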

Note that the group action applied to $R\in\cR^G_{[f]}$ 
gives a bijection $\cR^G_{[f]}\longleftrightarrow \F_2T_M/\mu_3(\pi_3M)$. 
As a consequence Gabai's LBT follows:
If $\pi_1M$ contains no 2-torsion then $\F_2T_M=\{0\}$ and hence $ \cR^G_{[f]}$ consists of a single isotopy class. In fact, in the second version of his paper Gabai strengthens his result to a ``normal form'' \cite[Thm.1.3]{Gab} which in our language translates to saying that there is a surjection $\F_2T_M \twoheadrightarrow  \cR^G_{[f]}$. Examples where this projection is not injective were given in \cite{Schwartz}, providing 4-manifolds $M$ for which $\mu_3$ is non-trivial.

\begin{rem}\label{rem:common}
Hannah Schwartz pointed out examples showing that the geometric dual $G$ needs to be common to both spheres.
Consider closed, 1-connected 4-manifolds $M_0, M_1$ 
such that
\[
\varphi: M_0 \# S^2 \times S^2\overset{\cong}{\ra} M_1\# S^2 \times S^2
\]
is a diffeomorphism which preserves the $S^2 \times S^2$-summands homotopically. 
See for instance the examples in \cite{AKMRS}.
Consider the spheres $R:= S^2 \times p$ and $R':=\varphi(S^2 \times p)$ in $M_1\# S^2 \times S^2$ with geometric duals $p \times S^2$ and $\varphi(p \times S^2)$. Then $R$ and $R'$ are homotopic but can't be isotopic, otherwise the ambient isotopy theorem would lead to a diffeomorphism $M_0\cong M_1$. 
\end{rem}

\subsection{Consequences of Theorem~\ref{thm:4d-light-bulb} and its proof}\label{subsec:corollaries-of-LBT}

\begin{cor}\label{cor:infinitely-many}
There exist  $4$--manifolds $M$ and $f:S^2\imra M$ with infinitely many free isotopy classes of embedded spheres homotopic to $f$ (and with common geometric dual). These manifolds also admit infinitely many distinct pseudo-isotopy classes of self-diffeomorphisms.
\end{cor}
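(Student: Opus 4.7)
The strategy is to apply Theorem~\ref{thm:4d-light-bulb} directly, by choosing an oriented $4$-manifold $M$ together with $f:S^2\imra M$ and a geometric dual $G$ so that $\F_2T_M/\mu_3(\pi_3M)$ is infinite; then $\cR^G_{[f]}$ itself is infinite, giving the first assertion. For the second assertion, distinct sphere classes will be converted into self-diffeomorphisms and separated using a concordance-level refinement of the $\mu_3$-obstruction.

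For the construction I would pick $M$ so that $\pi_1 M$ contains the infinite dihedral group $D_\infty=\Z_2*\Z_2=\langle a,b\mid a^2=b^2=1\rangle$, since the reduced words $a,\,b,\,aba,\,bab,\,ababa,\,babab,\ldots$ form an infinite set of pairwise distinct $2$-torsion elements, giving an infinite $\F_2$-basis of $\F_2T_M$. A concrete closed oriented model is
\[
M := \bigl((\RP^3\#\RP^3)\times S^1\bigr)\,\#\,(S^2\times S^2),
\]
for which $\pi_1M=D_\infty\times\Z$. Take $f$ and the geometric dual $G$ to be the two obvious $S^2$-factors in the $S^2\times S^2$-summand; then $f$ is embedded, $\mu(f)=0$, and Theorem~\ref{thm:4d-light-bulb} applies.

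Next I would verify that $\mu_3(\pi_3M)$ has infinite cokernel in $\F_2T_M$. The homomorphism $\mu_3$ of Lemma~\ref{lem:Wall} is defined via self-intersections of immersions $S^3\imra M\times\R^2$, so its image is generated by Whitehead-product contributions from $\pi_2M$ together with classes pulled back from the universal cover of the $\RP^3\#\RP^3$ summand---a source of bounded complexity compared with the infinite-dimensional $\F_2T_M$. A direct calculation (or, if this proves delicate, iterating the construction by connect-summing further $\RP^3$-factors to grow $\F_2T_M$ faster than $\pi_3M$) shows that $\F_2T_M/\mu_3(\pi_3M)$ is infinite; Theorem~\ref{thm:4d-light-bulb} then gives infinitely many free isotopy classes in $\cR^G_{[f]}$.

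The main obstacle is the pseudo-isotopy statement. Using the final clause of Theorem~\ref{thm:4d-light-bulb}, I would first represent the distinct classes $[R_i]\in\cR^G_{[f]}$ by embedded spheres agreeing near $G$, and then construct self-diffeomorphisms $\phi_i:M\to M$, supported away from $G$, with $\phi_i(R_1)=R_i$; each $\phi_i$ is homotopic to the identity. A pseudo-isotopy $F:M\times I\to M\times I$ between $\phi_i$ and $\phi_j$ would restrict on $R_1\times I$ to a smooth concordance from $R_i$ to $R_j$ inside $M\times I$, so it suffices to argue that $\mu_3$ is a concordance invariant---a statement naturally implied by its definition through $3$-dimensional immersions into $M\times\R^2$. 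Granting this, distinct cosets in $\F_2T_M/\mu_3(\pi_3M)$ produce pairwise non-concordant spheres and hence pairwise non-pseudo-isotopic diffeomorphisms of $M$, completing the proof.
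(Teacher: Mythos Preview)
Your outline follows the paper's strategy---apply Theorem~\ref{thm:4d-light-bulb} to an $M$ with infinitely many $2$-torsion elements, then produce diffeomorphisms and separate them via concordance-invariance of $\fq$---but there are two genuine gaps.

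First, you do not actually prove that $\F_2T_M/\mu_3(\pi_3M)$ is infinite. The phrases ``bounded complexity'' and ``grow $\F_2T_M$ faster than $\pi_3M$'' are not arguments: $\pi_3M$ is typically infinitely generated over $\Z$, and bounding the \emph{image} of $\mu_3$ requires real work. The paper handles this by choosing $M'$ built only from $0$-, $1$- and $2$-handles, so that $H_3(\widetilde M)=0$ on the nose; then Lemma~\ref{lem:Hurewicz} (which shows $\mu_3$ factors through the Hurewicz map to $H_3(\widetilde M)$) gives $\mu_3(\pi_3M)=0$ outright. Your closed example in fact also satisfies $H_3(\widetilde M)=0$ (the universal cover of $(\RP^3\#\RP^3)\times S^1$ is $S^2\times\R^2$, and connect-summing copies of $S^2\times S^2$ along an infinite orbit does not create $H_3$), so the same lemma would finish your argument---but you neither invoke Lemma~\ref{lem:Hurewicz} nor compute $H_3(\widetilde M)$.

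Second, you assert the existence of self-diffeomorphisms $\phi_i:M\to M$ with $\phi_i(R_1)=R_i$ without justification. This is precisely the content of Lemma~\ref{lem:pseudo-isotopy}: one observes that a closed regular neighborhood of $R\cup G$ is a standard plumbing $M_n$ depending only on the normal Euler number, and that surgery on the \emph{common} dual $G$ identifies the complements $M_R\cong M_{R'}$; since $\mathrm{Diff}^+(S^3)$ is connected, these glue to a diffeomorphism of $M$ carrying $R$ to $R'$. This is not automatic from Theorem~\ref{thm:4d-light-bulb}, and the extra claims you make (``supported away from $G$'', ``homotopic to the identity'') are neither needed nor established. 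Once the $\phi_i$ exist, your concordance argument is correct and matches the paper's use of Corollary~\ref{cor:concordance}.
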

These self-diffeomorphisms (carrying one sphere to the other) will be constructed in Lemma~\ref{lem:pseudo-isotopy}.
For example, let $M'$ be any $4$--manifold obtained by attaching 2-handles to a boundary connected sum of copies of $S^1\times D^3$ such that $\Z/2\ast\Z/2\leq\pi_1M'$. This infinite dihedral group contains infinitely many distinct reflections (which are of order~$2$). It follows from Theorem~\ref{thm:4d-light-bulb} that there exist infinitely many isotopy classes of spheres homotopic to $p \times S^2$ in $M:=M' \#(S^2\times S^2)$, all with the same geometric dual $S^2 \times p$ and all related by diffeomorphisms. 

Under the hypotheses of Theorem~\ref{thm:4d-light-bulb} we also have the following results.
\begin{cor}\label{cor:concordance}
Concordance implies isotopy for spheres with a common geometric dual. 
\end{cor}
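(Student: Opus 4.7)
Since concordance implies homotopy, two concordant embedded spheres $R,R':S^2\hookrightarrow M$ with common geometric dual $G$ both lie in $\cR^G_{[R]}$, and by Theorem~\ref{thm:4d-light-bulb} it suffices to show that the two classes they represent in $\F_2T_M/\mu_3(\pi_3M)$ coincide. The plan is to use the embedded concordance to exhibit the Freedman--Quinn obstruction between them as lying in the image of $\mu_3$, hence vanishing in the quotient.

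Given a concordance $C:S^2\times I\hookrightarrow M\times I$, I would first equip $C$ with a generic height function on the $I$-factor and perturb its projection to $M$ to obtain a regular homotopy $H:S^2\times I\to M$ from $R$ to $R'$. Because $C$ is embedded in $M\times I$, the double points of $H$ are organised in cancelling pairs by matched index-$1$ and index-$2$ critical levels of the height function (finger moves cancelled by Whitney moves). The proof of Theorem~\ref{thm:4d-light-bulb} computes the Freedman--Quinn class from such a regular homotopy as a weighted $\F_2$-count of 2-torsion $\pi_1$-loops read off from these double points.

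Next I would exhibit this $\F_2$-count as $\mu_3(\alpha)$ for an explicit $\alpha\in\pi_3M$ built directly from the concordance. The arcs in $C$ joining each pair of preimages of a double point of $H$, swept along $I$ and closed up using parallel copies of the common dual $G$, assemble an immersed 3-sphere in $M\times\R^2$ whose Wall self-intersection recovers precisely the 2-torsion sum counted by the Freedman--Quinn obstruction of $H$. Thus the obstruction lies in $\mu_3(\pi_3M)$, and Theorem~\ref{thm:4d-light-bulb} produces the desired isotopy.

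The main obstacle is the $\pi_1$-bookkeeping in the last step: one must verify that the 2-torsion $\pi_1$-elements attached to the paired finger and Whitney moves coming from $C$ match the Wall self-intersections of $\alpha$ exactly, with no spurious contributions from the height function or the perturbation choices. This should reduce to a local model near each pair of cancelling critical points of $C$, where the embeddedness of the concordance (as opposed to a generic homotopy) forces the Whitney arcs and disks to be framed in precisely the way needed to identify the two counts.
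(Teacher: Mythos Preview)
Your overall strategy—reduce to showing $\fq(R,R')=0$ and then invoke Theorem~\ref{thm:4d-light-bulb}—is exactly right, but you are missing the one-line observation that makes the corollary immediate, and your proposed Step~5 is both unnecessary and not obviously correct.

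Recall how $\fq(R,R')$ is defined (Definition~\ref{def:fq} and Section~\ref{sec:homotopies}): one takes \emph{any} based homotopy $H$ from $R\times 0$ to $R'\times 0$ in the \emph{5-manifold} $M\times\R$, forms a generic track $S^2\times I\imra M\times\R\times I$, and sets $\fq(R,R')=[\mu_3(H)]$. Now a concordance $C:S^2\times I\hra M\times I$ already sits inside $M\times 0\times I\subset M\times\R\times I$ as an \emph{embedding}, hence has $\mu_3=0$. Up to a homotopy rel boundary (straightening the $I$-coordinate) and a basing adjustment, $C$ is itself an admissible track for computing $\fq$, so $\fq(R,R')=0$ on the nose. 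That is the entire content of the paper's remark that $\fq$ is a ``based concordance invariant''.

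By contrast, you project $C$ down to the 4-manifold $M$, creating double points that were never there, and then propose to cancel their contribution by exhibiting an explicit $\alpha\in\pi_3M$ built from arcs in $C$ and copies of $G$. This construction is not spelled out: it is unclear why the pieces glue to a 3-sphere rather than some other 3-manifold, why $G$ is needed at all (the statement of $\fq$ never uses $G$), or why the resulting $\mu_3(\alpha)$ matches the 2-torsion count from $H$. You yourself flag this as the main obstacle. Even if it can be made to work, it is recovering by hand the indeterminacy $\mu_3(\pi_3M)$ that is already absorbed into the \emph{definition} of $\fq$. The point of the paper's argument is precisely that no such $\alpha$ is needed: the concordance itself is a homotopy with vanishing $\mu_3$.
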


\begin{cor}\label{cor:5d}
If $R, R':S^2\hra M^4$ have a common geometric dual in $M$ then $R$ and $R'$ are isotopic in  $M \times \R$ if and only if they are isotopic in $M$. 
\end{cor}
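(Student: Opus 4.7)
The ``if'' direction is immediate. For the converse, suppose $R$ and $R'$ are isotopic in $M\times\R$. By Corollary~\ref{cor:concordance} it suffices to produce a concordance from $R$ to $R'$ in $M$ admitting $G$ as a common geometric dual. By isotopy extension we may assume the $M\times\R$-isotopy is ambient, so its trace is an embedded $S^2\times I\hra (M\times\R)\times I$, which is a concordance from $R$ to $R'$ in the $5$-manifold $M\times\R$. The task is to destabilize this embedded $S^2\times I$ one dimension down, into $M\times I$.

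\smallskip

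The plan is to project along the $\R$-factor and resolve the resulting self-intersections by Whitney moves. Projection yields a generic codimension-two map $S^2\times I\to M\times I$; because the source was embedded one dimension higher, this map can be arranged to be an immersion whose self-intersection locus is a properly embedded $1$-submanifold, and each component bounds a canonical framed Whitney disk swept out by the projected-out $\R$-direction. Running these Whitney moves in the $5$-manifold $M\times I$ resolves the self-intersections pairwise, producing an embedded $S^2\times I\hra M\times I$ rel.\ $R\sqcup R'\subset M$. One preserves $G$ as a common dual by first arranging the ambient isotopy to fix $G$ setwise---possible since $G\subset M\times\{0\}$ has trivial normal bundle in $M\times\R$---and then confining all Whitney moves to the complement of $G$.

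\smallskip

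The main obstacle is executing the Whitney moves so that the resulting concordance is genuinely embedded and $G$ remains a geometric dual throughout. The Whitney disks themselves can be embedded in dimension five (their accidental self-intersections are of codimension $\geq 3$ and removable by finger moves), so this is not where the difficulty lies; the care is in pairing the intersection components and keeping all moves off $G$, which is precisely where the hypothesis of a common geometric dual is crucial. Once such a concordance in $M$ with common dual $G$ is obtained, Corollary~\ref{cor:concordance} converts it into the desired isotopy of $R$ to $R'$ in $M$.
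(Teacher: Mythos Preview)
Your central geometric step does not work as stated. After projecting the embedded track $S^2\times I\hra M\times\R\times I$ down to $M\times I$, the double locus is indeed a closed 1-manifold, but the claim that ``each component bounds a canonical framed Whitney disk swept out by the projected-out $\R$-direction'' is not meaningful. The $\R$-direction is transverse to $M\times I$, so whatever it sweeps out lives in $M\times\R\times I$, not in the 5-manifold $M\times I$ where you need to perform moves. Moreover, Whitney disks pair isolated double points of an $n$-manifold in a $2n$-manifold; here you have double \emph{circles} of a 3-manifold in a 5-manifold, and removing such circles is a different problem. In fact, the obstruction to eliminating these singular circles is precisely the Freedman--Quinn invariant (see Section~\ref{sec:singular circles}), so you cannot bypass it by an elementary geometric construction. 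Your phrase ``resolves the self-intersections pairwise'' also suggests a confusion with the 0-dimensional double-point picture.

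The paper's argument is both simpler and avoids this issue entirely. By Definition~\ref{def:fq}, $\fq(R,R')=[\mu_3(H)]$ for \emph{any} based homotopy $H$ from $R$ to $R'$ in $M\times\R$. An isotopy in $M\times\R$ is such a homotopy whose generic track in $M\times\R\times I$ is embedded, so $\mu_3(H)=0$ and hence $\fq(R,R')=0$. Corollary~\ref{cor:fq} then gives the isotopy in $M$. No concordance in $M\times I$ needs to be constructed; the point is simply that $\fq$ is, by definition, computed one dimension up.
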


The two results are actually ``scholia'', i.e.\  corollaries to our proof of Theorem~\ref{thm:4d-light-bulb}. Namely, we show that the bijections in our theorem are induced by a based concordance invariant $\fq(R,R')\in\F_2T_M/\mu_3(\pi_3M) $ used by Mike Freedman and Frank Quinn in \cite[Thm.10.5(2)]{FQ} and later named $\fq$ by Richard Stong \cite[p.2]{Stong}.

As explained in Section~\ref{sec:FQ}, Freedman--Quinn actually use the self-intersection invariant $\mu_3(H)\in\F_2T_M$ of a map $S^2 \times I\imra M \times \R \times I$ with transverse double points obtained by perturbing the track of a based homotopy $H$ between $R$ and $R'$ in $M \times \R$, explaining Corollary~\ref{cor:5d}. Stong states that in the quotient $\F_2T_M/\mu_3(\pi_3M)$, the choice of  $H$ disappears and gives $\fq(R,R')$. This will be proven in Section~\ref{sec:homotopies} for any two spheres $R,R'$ in $M$ that are based homotopic, as is the case for $R,R' \in \cR^G_{[f]}$. 

\begin{cor}\label{cor:fq}
If $R, R':S^2\hra M^4$ have a common geometric dual and are homotopic, then they are isotopic if and only if $\fq(R,R')=0$.
\end{cor}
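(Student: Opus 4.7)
The plan is to deduce Corollary~\ref{cor:fq} directly from Theorem~\ref{thm:4d-light-bulb} by identifying the set-theoretic bijection it provides with the Freedman--Quinn invariant $\fq$.

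First, I would observe that since $R$ and $R'$ are homotopic and share the geometric dual $G$, both represent elements of $\cR^G_{[R]}$ (taking $f:=R$). Theorem~\ref{thm:4d-light-bulb} then supplies a bijection $\cR^G_{[R]} \longleftrightarrow \F_2T_M/\mu_3(\pi_3M)$ sending $R$ to $0$, and its last clause promotes ``same element in $\cR^G_{[R]}$'' to ``isotopic by an isotopy supported away from $G$'' (after an initial adjustment making $R$ and $R'$ agree near $G$, which is harmless since $G$ is a common geometric dual). Thus $R$ and $R'$ are isotopic if and only if the image of $R'$ under this bijection is zero.

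Next, I would invoke the identification of this bijection with $\fq$, as announced in the paragraph just before the corollary and to be carried out in Section~\ref{sec:FQ} and Section~\ref{sec:homotopies}. Concretely: given any based homotopy $H$ from $R$ to $R'$ in $M\times\R$, one perturbs the track of $H$ to a generic map $S^2\times I \imra M\times\R\times I$ with transverse double points and records $\mu_3(H)\in\F_2T_M$; passing to the quotient by $\mu_3(\pi_3M)$ kills the ambiguity in the choice of $H$, producing $\fq(R,R')\in\F_2T_M/\mu_3(\pi_3M)$, and this is exactly the element assigned to $R'$ by the bijection of Theorem~\ref{thm:4d-light-bulb}. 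Combining the two steps yields the stated equivalence: $R$ and $R'$ are isotopic if and only if $\fq(R,R')=0$.

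The main obstacle is not the present corollary, which is essentially formal, but rather the two inputs it rests on: \emph{(i)} well-definedness of $\fq$ modulo $\mu_3(\pi_3M)$ for any two based-homotopic spheres, which amounts to showing that two based homotopies differ (rel boundary) by an element of $\pi_3M$ and that $\mu_3$ is a homomorphism on $\pi_3M$ as in Lemma~\ref{lem:Wall}; and \emph{(ii)} matching $\fq$ with the bijection of Theorem~\ref{thm:4d-light-bulb}, which requires tracking the Whitney-disk and finger-move obstructions produced in the proof of that theorem and identifying them with the Freedman--Quinn self-intersection of a homotopy. Both are handled in the cited sections, so here they only need to be invoked.
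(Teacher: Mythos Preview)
Your proposal is correct and follows essentially the same approach as the paper, which deduces the corollary in one line from the relation $\fq(t\cdot R,R)=[t]$ established in Section~\ref{subsec:geo-action-def} together with the transitivity and stabilizer statements of Theorem~\ref{thm:4d-light-bulb}. Your more detailed unpacking of the bijection and its identification with $\fq$ is exactly what lies behind that one line.
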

This follows from the relation $\fq(t\cdot R,R)=t$ for all $t\in\F_2T_M/\mu_3(\pi_3M)$, between our action and the Freedman--Quinn invariant, see Section~\ref{subsec:geo-action-def}. 

For the next scholium we consider a ``relative unknotting number'' $\un(R,R')$ for homotopic spheres $R,R': S^2\hra M$: By assumption, there is a sequence of finger moves and Whitney moves that lead from $R$ to $R'$, compare Section~\ref{subsec:regular-homotopy}. Let $\un(R,R')\in \N_0$ denote the minimal number of finger moves required in any such homotopy. 

In general, this is an extremely difficult invariant to compute, even though we'll see in Lemma~\ref{lem:compute-mu-by-crossing-changes} that one always has the estimate $\un(R,R') \geq |\fq(R,R')|$.
Here the {\em support} $|t|$ is the number of non-zero coefficients in $t\in \F_2T_M$, and for an equivalence class $[t]\in \F_2T_M/\mu_3(\pi_3M)$ we let $|[t]|$ be the minimum support of all representatives. 
Michael Klug pointed out that in the presence of a common geometric dual this estimate becomes an equality (see the last part of Section~\ref{sec:infinitely-many}):
\begin{cor}\label{cor:unknotting number}
For $R,R'\in \cR^G_{[f]}$, the relative unknotting number equals the support of the Freedman-Quinn invariant: $\un(R,R') = |\fq(R,R')|$.
\end{cor}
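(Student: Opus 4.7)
The lower bound $\un(R,R')\ge |\fq(R,R')|$ is already established by Lemma~\ref{lem:compute-mu-by-crossing-changes}, so the task is to produce a homotopy from $R$ to $R'$ that uses exactly $|\fq(R,R')|$ finger moves. The plan is to realize the Freedman-Quinn invariant on the nose by the geometric action of $\F_2T_M$ on $\cR^G_{[f]}$, and then invoke the ``isotopy supported away from $G$'' clause of Theorem~\ref{thm:4d-light-bulb} to finish the homotopy without introducing any additional finger moves.

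Concretely, I would choose a representative $t\in\F_2T_M$ of the quotient class $\fq(R,R')\in\F_2T_M/\mu_3(\pi_3M)$ with minimum support, so that $|t|=|\fq(R,R')|$ by definition. The relation $\fq(t\cdot R,R)=t$ from Section~\ref{subsec:geo-action-def}, combined with the bijection in Theorem~\ref{thm:4d-light-bulb}, implies that $t\cdot R$ and $R'$ represent the same isotopy class in $\cR^G_{[f]}$. After a small local adjustment making them agree near $G$, the final clause of Theorem~\ref{thm:4d-light-bulb} produces an ambient isotopy from $t\cdot R$ to $R'$ supported away from $G$, which by definition contributes zero finger moves. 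Meanwhile, the geometric construction of the action in Section~\ref{subsec:geo-action-def} obtains $t\cdot R$ from $R$ by performing exactly one finger move for each basis element $g\in T_M$ appearing with nonzero coefficient in $t$, pushing $R$ across a meridian disk along an embedded loop representing $g$. Concatenating, we get a homotopy $R\rightsquigarrow t\cdot R\rightsquigarrow R'$ using precisely $|t|=|\fq(R,R')|$ finger moves.

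The main subtlety is verifying that the action $t\cdot R$ is indeed realized by a regular homotopy whose finger-move count equals $|t|$ rather than exceeds it---this reduces to inspecting the definition of the action, which is set up as a superposition of single finger moves, one per order-two basis element, so the check should be essentially tautological once that construction is in place. The role of the common geometric dual $G$ is indispensable here: it is precisely what allows Theorem~\ref{thm:4d-light-bulb} to upgrade ``same $\cR^G_{[f]}$-class'' to ``ambient isotopic'', ensuring that no further finger moves are needed to pass from $t\cdot R$ to $R'$. Without the dual, one could at best conclude that $t\cdot R$ and $R'$ are concordant or homotopic, and the resulting homotopy might require additional finger moves that would spoil the minimal count.
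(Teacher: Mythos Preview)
Your argument is correct and follows the same route as the paper's proof in Section~\ref{sec:infinitely-many}: the lower bound is taken from Lemma~\ref{lem:compute-mu-by-crossing-changes} (as the paper announces just before stating the corollary), and the upper bound comes from realizing a minimal-support representative $t$ of $\fq(R,R')$ via the geometric action, then using Theorem~\ref{thm:4d-light-bulb} to identify $t\cdot R$ with $R'$ by an isotopy. Two minor points: the relation from Section~\ref{subsec:geo-action-def} reads $\fq(t\cdot R,R)=[t]$, not $t$; and you do not need the ``supported away from $G$'' clause of Theorem~\ref{thm:4d-light-bulb} at all---any isotopy contributes zero finger moves, so the main bijection statement already suffices.
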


Using the 4-manifold $M$ introduced below Corollary~\ref{cor:infinitely-many}, we see that any (arbitrary large) number is realized as the relative unknotting number between spheres in $M$. See \cite{JKRS} for results on unknotting numbers of $2$-spheres in $S^4$ relative to the unknot.

\subsection{An isotopy invariant statement}\label{subsec:more-general-LBT}
Even though the original LBT's in $S^2 \times S^1$ and $S^2 \times S^2$ are extremely well motivated, see \cite{Gab}, readers may find it confusing that our set $\cR^G_{[f]}$ is  {\em not isotopy invariant}: If we do a finger move on $R\in \cR^G_{[f]}$ that introduces two additional intersection points with the dual $G$, the resulting embedded sphere is isotopic to $R$ but not in $\cR^G_{[f]}$ any more. In other words, if one wants isotopy invariant statements, one should not fix a sphere $G$ as in the LBT. This problem can be addressed as follows.
\begin{defn}\label{def:dual-pairs}
For fixed $R:S^2\hra M^4$ with fixed geometric dual $G$, consider pairs of embeddings $R', G':S^2\hra M$ such that:
\begin{itemize}
\item $R'$ is {\em homotopic} to $R$ via $R_s:S^2\to M$,
\item $G'$ is {\em isotopic} to $G$ via $G_s:S^2\hra M$, and
\item $G_s$ is a geometric dual to $R_s$  for each  $s\in I$.
\end{itemize}
Denote by $\cR_{R,G}$ the set of isotopy classes of such pairs $(R',G')$, where an \emph{isotopy of a pair} is a pair $(R_s,G_s)$ as above where $R_s$ is in addition an isotopy.
\end{defn}
Then an isotopy $R_s$ can be embedded into an ambient isotopy $\varphi_s:M\overset{\cong}{\ra} M$ and hence leads to pairs $(R_s, \varphi_s(G))$ that are all equal in $\cR_{R,G}$.

\begin{thm}\label{thm:4d-light-bulb-isotopy}
The group $\F_2T_M$ acts transitively on $ \cR_{R,G}$, with stabilizers $\mu_3(\pi_3M) $. 
The action on the basepoint $(R,G)$ hence gives a bijection $\F_2T_M/\mu_3(\pi_3M)\longrightarrow \cR_{R,G}$. 
The inverse of this bijection is given by
the Freedman-Quinn invariant 
$\fq(R,\cdot):\cR_{R,G}\longrightarrow \F_2T_M/\mu_3(\pi_3M)$.  
\end{thm}

It turns out that this result is equivalent to Theorem~\ref{thm:4d-light-bulb} above, and as a consequence we won't follow up on it in this paper. For example, to derive Theorem~\ref{thm:4d-light-bulb} we can use Lemma~\ref{lem:based} to turn any homotopy $R_s$ into one that satisfies the last condition in the above definition (with $G_s=G$ a constant isotopy).

\subsection{Outline of the proof of Theorem~\ref{thm:4d-light-bulb}}\label{sec:outline-proof}
\begin{figure}[ht!]
        \centerline{\includegraphics[scale=.32]{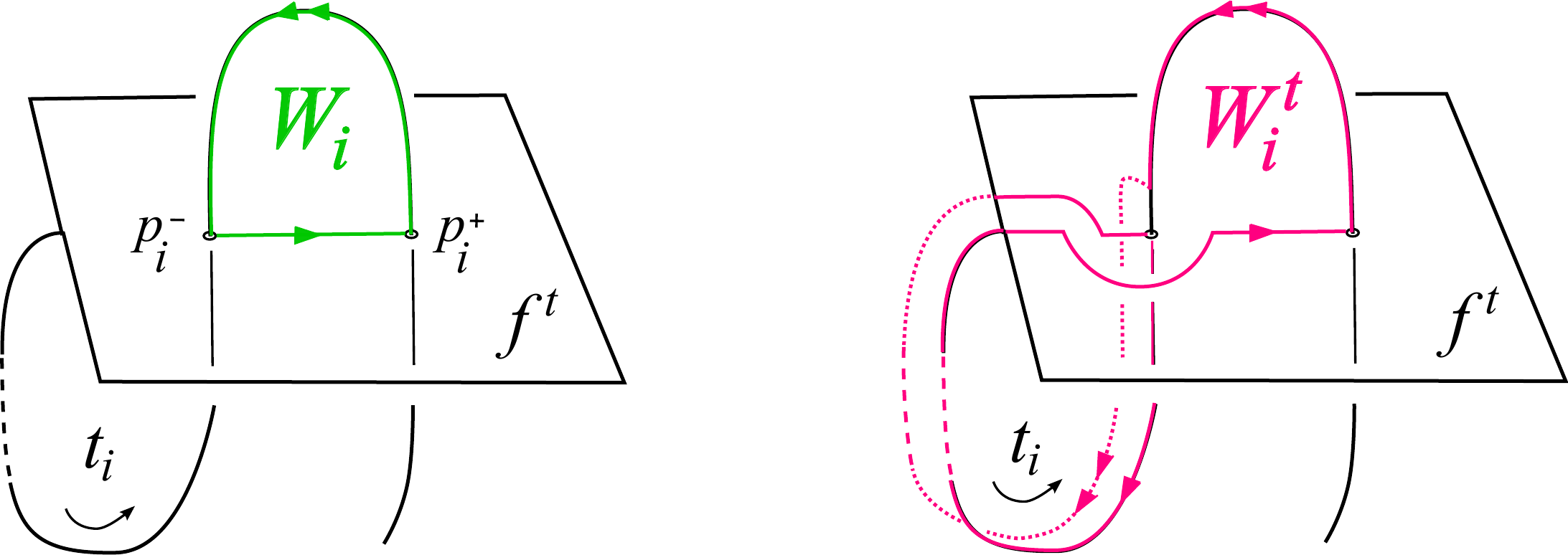}}
        \caption{The Whitney disks $W_i$ and $W_i^t$ pair the same self-intersections $p_i^{\pm}$ of $f^t$. 
        On the boundary $\partial W_i^t$ differs from $\partial W_i$ as it departs and approaches from the negative self-intersection $p_i^-$ in different local sheets of $f^t$. }
        \label{fig:sheet-change-w-disks-1}
\end{figure}

Our action of $t=t_1+\cdots +t_n \in {\F_2T_M}$ on $R\in \cR^G_{[f]}$ will be defined as follows. First create a generic map $f^t:S^2\imra M$ by doing $n$ finger moves on $R$ along arcs representing $t_i\in T_M$. 
There is a collection of $n$ Whitney disks $\cW\subset M\smallsetminus G$ for $f^t$ which are ``inverse'' to the finger moves, i.e.\ the result of doing the Whitney moves along the Whitney disks in $\cW$ is isotopic to $R$. 

Since $t_i^2=1$, we can use $G$ to find a different collection $\cW^t\subset M\smallsetminus G$ of $n$ Whitney disks on $f^t$ which induce the same pairings of self-intersections of $f^t$ as $\cW$ but which induce different sheet choices for the preimages of the self-intersections,  see Figure~\ref{fig:sheet-change-w-disks-1} and Lemma~\ref{lem:choice-of-disks-exists}. 
The result of doing the Whitney moves on the Whitney disks in $\cW^t$ is an embedded sphere denoted by $t\cdot R$, which by construction is homotopic to $R$ and has geometric dual $G$. We'll show in Section~\ref{sec:proof-main} that $t\cdot R$ is isotopic to $R$ if and only if $t\in{\mu_3(\pi_3M) }$.

The isotopy class of $t\cdot R$ can also be described explicitly without knowing the Whitney disk collection $\cW^t$ by the following {\it Norman sphere}, built from $f^t$ and $G$ (see Section~\ref{sec:norman}). Instead of doing Whitney moves on $\cW^t$, the \emph{Norman trick} \cite{Nor} can be applied to eliminate the self-intersections of $f^t$ by tubing into the dual sphere $G$ along arcs in $f^t$. This operation also involves a choice of local sheets for each self-intersection,
and we will show in Section~\ref{sec:choices-of-arcs-and-pairings} that $t\cdot R$ is isotopic to the result of applying the Norman trick using the \emph{opposite} local sheets at each negative self-intersection compared to the original finger moves.

Gabai's proof of his LBT in \cite{Gab} introduces a notion of ``shadowing a homotopy by a tubed surface'', which uses careful manipulations of several types of tubes and their guiding arcs to control the isotopy class of the result of a homotopy between embeddings.
In addition to using the Norman trick, Gabai also works with tubes along framed arcs that extend into the ambient $4$--manifold, including the guiding arcs for finger moves.

Our proof of Theorem~\ref{thm:4d-light-bulb}, which implies Gabai's LBT, takes a different viewpoint by focusing on the generic sphere $f$ which is the middle level of a homotopy between embeddings $R$ and $R'$, given by finger moves and then Whitney moves. By reversing the finger moves, we see that both these embeddings are obtained from $f$ by sequences of Whitney moves along two collections of Whitney disks. We analyze all choices involved in such collections of Whitney disks and show how they are related to the Freedman--Quinn invariant $\fq(R,R')$.

\begin{figure}[ht!]
        \centerline{\includegraphics[scale=.3]{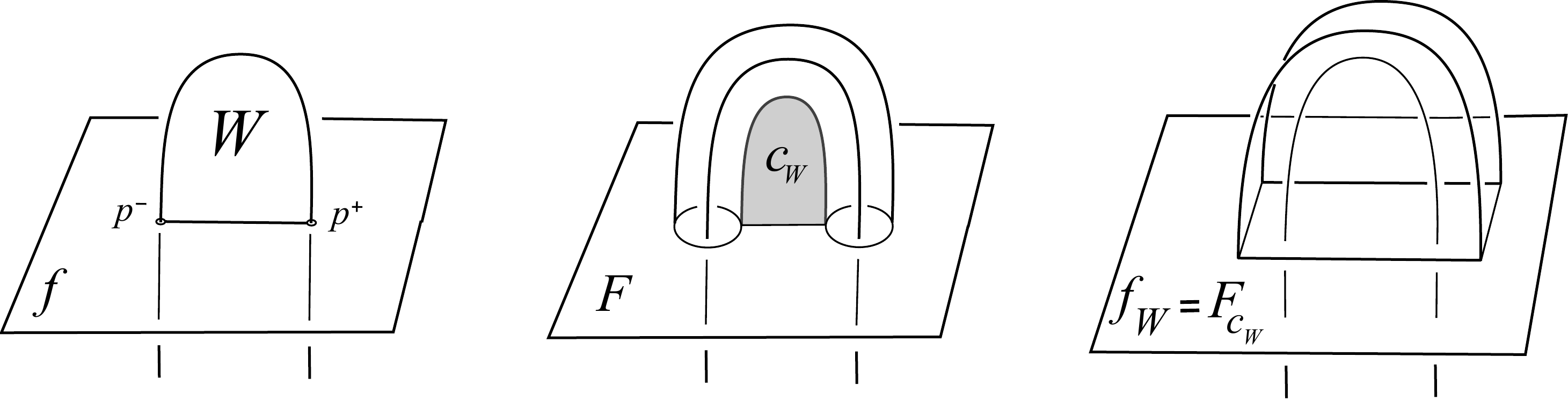}}
        \caption{Left: $W$ pairing self-intersections $p^\pm$ of $f$. Center: The surface $F$ obtained by tubing $f$ to itself admits a cap $c_W$. Right: The result $f_W$ of doing the $W$-move is isotopic to the result $F_{c_W}$ of surgery on $c_W$.}        
        \label{fig:tube-caps-0}
\end{figure}

Our key tool is the relationship between Whitney moves and surgeries on surfaces as shown in Figure~\ref{fig:tube-caps-0}.
Note that the dual curve to $\partial c_W$ on $F$ is a meridional circle to $F$ which bounds a meridional disk. This meridional disk $d$ is usually of not much use since it intersects $F$ (exactly once). However, in the presence of the dual $G$ to $F$, we can tube $d$ into $G$, removing this intersection and obtaining a cap $c_G$ which is disjoint from $F$ and $c_W$, see Figure~\ref{fig:tube-caps-2}. 

Now we can apply the fundamental isotopy between surgeries along dual curves in an orientable surface
showing that surgery on $c_W$ is isotopic to surgery on $c_G$ (see Section~\ref{sec:capped-surface-w-move}). 
And if $W'$ is any other Whitney disk for $f$ having the same Whitney circle $\partial W'=\partial W$, then we see that surgery on $c_G$ is also isotopic to surgery on $c_{W'}$. All together, this implies that the Whitney moves on $f$ along $W$, respectively $W'$, give isotopic results $R$, respectively $R'$! 

This outline finishes our proof in the very simple case that our collections contain only one Whitney disk and the Whitney circles agree. Multiple Whitney disks in our collections correspond to higher genus capped surfaces and the remaining steps in the argument are ``only'' about showing independence of Whitney circles.  Section~\ref{sec:choices-of-arcs-and-pairings} consists of a sequence of lemmas that reduce this dependence only to the choices of sheets at self-intersections whose group elements are of order 2. Fortunately, these are exactly detected by the Freedman--Quinn invariant, finishing our proof.

\subsection{Isotopy of disks with duals}\label{subsec:isotopy-of-disks}

Using the last sentence of Theorem~\ref{thm:4d-light-bulb} one sees that the bijection described by the theorem is equivalent to the following result. In one direction one works in the complement of a tubular neighborhood of the dual sphere $G$, and in the other direction one attaches $D^2 \times S^2$ to the component $S^1 \times S^2\subseteq \partial M$.

\begin{thm}\label{thm:disks}
Let $M$ be a connected oriented manifold with $S^1 \times  S^2\subseteq \partial M$ and let $R:(D^2,S^1) \hra (M,\partial M)$ be an embedding with $\partial R = S^1 \times  p\subset S^1 \times  S^2$. Then the set
\[
\cR_{R}:= \{ R': D^2 \hra M \mid R' \text{ is homotopic rel boundary to } R\} \slash \text{isotopy of $R'$ rel boundary}
\]
is in bijection with $\F_2T_M/\mu_3(\pi_3M)$ via the Freedman--Quinn invariant $\fq(R,\cdot)$.
\end{thm}
Note that $G=q \times S^2\subset\partial M$ is a geometric dual sphere to any $R'\in \cR_{R}$ in the sense that it has trivial normal bundle and intersects $\partial R'$ transversely in the single point $q\times p$. Hence the question arises whether such a theorem can continue to hold for embedded disks (rel boundary) with geometric dual $G:S^2\hra\partial M$ that is \emph{not} assumed to lie in a component $S^1 \times  S^2\subseteq \partial M$.  It's important to point out that our key Lemma~\ref{lem:choice-of-disks-exists} still works in this setting.

The first version of our paper contained a flawed argument that would have shown exactly such a generalization of our theorem. We are thankful to Dave Gabai for pointing out that our technique of sliding Whitney disks over each other may fail to preserve the relevant isotopy class in the case of ``self-slides'' in the setting of disks $R:D^2\hra M$; see the last paragraph in Section~\ref{sec:w-disk-slides}. 

In fact, Gabai recently posted a paper \cite{G2} constructing a generalization of the Freedman--Quinn invariant using work that goes back to Dax \cite{D}. He showed that a ``self-feeding'' construction gives homotopic neat embeddings $R,R':D^2\hra M$ with common dual in $\partial M$ and vanishing Freedman-Quinn invariant but which are {\it not} isotopic rel boundary, as detected by the Dax invariant. Here $M$ is the boundary connected sum of $S^1 \times D^3$ and $S^2 \times  D^2$, so $\partial M$ does not contain $S^1 \times S^2$.

It turns out that our Whitney disk self-sliding does indeed preserve the relevant isotopy class in the setting of spheres $R:S^2\hra M$ with a geometric dual, as is carefully discussed
in this current version of our paper; see Lemmas~\ref{lem:slide-w-disk-across-D-z} and
Lemma~\ref{lem:w-disk-self-slide}. Looking at the proof of Lemma~\ref{lem:slide-w-disk-across-D-z}, we see that it also applies to disks $R:D^2\hra M$ with geometric dual $G:S^2\hra\partial M$ exactly if there is an $S^1$-family of dual spheres running along the boundary of the disk. In other words, the component of $\partial M$ that contains the geometric dual $G$ must be a copy of $S^1 \times S^2$ with $G=q \times S^2$. This is consistent with Theorem~\ref{thm:disks} above.

Danica Kosanovi\'c and the second author recently showed that the Dax invariant classifies isotopy classes of embeddings $D^2\hra M$ with fixed boundary and dual sphere $G:S^2 \hra \partial M$. In particular, this contains the complete classification of Gabai's new examples. The case where the dual sphere $G$ is not contained in the boundary of $M$ is very interesting and completely open.

\subsection{Other proofs}\label{subsec:intro-other-proofs}
In Section~\ref{sec:ambient-morse-pi1-embedding-thm} we provide an alternative proof of Corollary~\ref{cor:fq}, sufficient to conclude Gabai's LBT. We start with the uniqueness part of Freedman and Quinn's Theorem~10.5 \cite{FQ,Stong} which gives a concordance $C: S^2 \times I \hra M\times I$ from $R$ to $R'$ if and only if $\fq(R,R')=0$. By analyzing the handle decomposition on $S^2 \times I$ related to the composition $p_2\circ C$, we show directly that in the presence of a geometric dual ``ambient handles can be cancelled'' and $C$ can be replaced by an isotopy. 
This argument gives a third proof of Gabai's LBT, and a second proof of our generalization. 

It's interesting to note that Gabai also provided a different argument for the special case of $M=S^2 \times S^2$, which was then exposited by Bob Edwards in \cite{Edwards}. 
An important step is ``codimension~2 embedded Morse theory'', sometimes also referred to as ``ambient Morse theory'' applied to a map of a surface to a 4-manifold. This is one dimension below the same technique for 3-manifolds in 5-manifolds used in our Section~\ref{sec:ambient-morse-pi1-embedding-thm}.

\vspace{3mm}
\noindent

{\em Acknowledgements:}  It is a pleasure to thank David Gabai and Daniel Kasprowski for helpful discussions. Thanks also to the referees for careful readings and helpful comments. The first author was supported by a Simons Foundation \emph{Collaboration Grant for Mathematicians}, and both authors thank the Max Planck Institute for Mathematics in Bonn, where this work was carried out.

\tableofcontents


 \section{Preliminaries on surfaces in 4-manifolds}\label{sec:preliminaries} 

We work in the smooth category throughout. 
Smoothing of corners will be assumed without mention during cut-and-paste operations on surfaces.
Orientations will usually be assumed and suppressed, as will choices of basepoints and whiskers.

In the smooth category, a {\em generic} map, written $f:\Sigma^2\imra M^4$,  is a smooth map which is an embedding, except for a finite number of transverse double points. This is the same as a generic immersion and means that there are coordinates on $\Sigma$ and $M$ such that $f$ looks locally like the inclusion $\R^2  \times  \{0\}\subset \R^4$ or like a transverse double point $\R^2 \times \{0\} \cup \{0\} \times \R^2 \subset \R^4$.

\subsection{Homotopy classes of surfaces}\label{subsec:regular-homotopy} 

We will use the following fact (see \cite[Sec.3]{PRT}) about homotopy classes $ [S^2,M]$ of maps $f:S^2\to M$ when $M$ is oriented: The inclusion of generic maps into all smooth (or even all continuous) maps induces a bijection
\[
\{f: S^2 \imra M \mid \mu_1(f) = 0\} /  \{\text{isotopies, finger moves, Whitney moves}\} \longleftrightarrow [S^2,M] 
\]
where $\mu_1(f)\in\Z$ denotes the coefficient at the trivial group element in the self-intersection invariant $\mu(f)$, see Section~\ref{sec:mu}. Note that $\mu_1(f)$ can be changed arbitrarily by (non-regular) cusp homotopies and in the following, we'll always tacitly assume that this has been done such that $\mu_1(f)=0$.  

An isotopy of generic maps is a map $H:S^2 \times I \to M$ such that $H(\cdot,t)$ is generic for all $t\in I$. Note that in a finger move or Whitney move this is true for all but one time $t$.

In the setting of the LBT, finger moves in a generic homotopy from $R$ to $R'$ having common geometric dual $G$ may be assumed to be disjoint from $G$ since finger moves are supported near their guiding arcs.
By the following lemma, the Whitney moves in such a homotopy may also be assumed to be disjoint from $G$ because one easily finds a preliminary isotopy that makes $R$ and $R'$ agree near $G$. This is also \cite[Lem.6.1]{Gab} where the 3D-LBT is used in the proof. For the convenience of the reader, and for completeness, we give an elementary argument.
 
\begin{lem}\label{lem:based}
If $R,R':S^2\hra M$ agree near a common geometric dual $G$ and are homotopic in $M$ then there exists a finite sequence of isotopies, finger moves and Whitney moves in $M\smallsetminus G$ leading from $R$ to $R'$.
\end{lem}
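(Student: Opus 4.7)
The plan is to reduce the lemma to a homotopy of punctured disks in $M\smallsetminus G$ and then to a vanishing in $\pi_2(M\smallsetminus G)$, and finally to prove that vanishing by passing to the universal cover. Since $R$ and $R'$ agree near $G$, I first pick a small disk $D_p\subset S^2$ containing $R^{-1}(p)=(R')^{-1}(p)$ on which $R=R'$, and write $\Delta:=R|_{S^2\smallsetminus D_p}$ and $\Delta':=R'|_{S^2\smallsetminus D_p}$; these are properly embedded $2$-disks in $M\smallsetminus G$ sharing the meridional boundary circle $\mu$. The lemma will follow once I show $\Delta$ and $\Delta'$ are homotopic rel $\partial$ in $M\smallsetminus G$: by the classification recalled in \S\ref{subsec:regular-homotopy}, such a homotopy is then realised by a finite sequence of isotopies, finger moves, and Whitney moves (cusps pair up since both disks are embedded), all supported in $M\smallsetminus G$, and these moves extend trivially across $D_p$ to the required sequence from $R$ to $R'$.

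To convert the rel-$\partial$ homotopy question into a $\pi_2$-vanishing, I glue $\Delta$ to $-\Delta'$ along $\mu$ to form a 2-sphere $\sigma\subset M\smallsetminus G$. Since $R=\Delta\cup R|_{D_p}$ and $R'=\Delta'\cup R|_{D_p}$ share the same piece over $D_p$, the image of $[\sigma]$ in $\pi_2 M$ equals $[R]-[R']=0$; thus $\Delta$ and $\Delta'$ are homotopic rel $\partial$ in $M\smallsetminus G$ precisely when $[\sigma]=0$ in $\pi_2(M\smallsetminus G)$. It therefore suffices to prove that the inclusion induces an injection $\pi_2(M\smallsetminus G)\hookrightarrow\pi_2 M$.

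For injectivity I pass to the universal cover. The dual disk $\Delta$ kills the meridian of $G$ in $\pi_1(M\smallsetminus G)$, so $\pi_1(M\smallsetminus G)\cong\pi_1 M$ and the universal cover of $M\smallsetminus G$ is $\widetilde M\smallsetminus\widetilde G$, where $\widetilde G\subset \widetilde M$ is a disjoint union of 2-spheres indexed by $\pi_1 M$ (each simply-connected with trivial normal bundle, since $G$ has those properties). Both $\widetilde M$ and $\widetilde M\smallsetminus\widetilde G$ are simply-connected, so Hurewicz gives $\pi_2=H_2$ on each, and the claim reduces to $H_2(\widetilde M\smallsetminus\widetilde G)\hookrightarrow H_2(\widetilde M)$. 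Excision and the Thom isomorphism (applied per sphere component) yield $H_3(\widetilde M,\widetilde M\smallsetminus\widetilde G)\cong\bigoplus H_3(S^2\times D^2,S^2\times S^1)\cong\bigoplus H_1(S^2)=0$, and the long exact sequence of the pair then gives the desired injectivity.

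The main technical step is the identification $\pi_1(M\smallsetminus G)=\pi_1 M$, which uses the presence of $\Delta$ as a null-disk for the meridian and is precisely what lets one avoid tracking $\pi_1$-actions and invoke Hurewicz cleanly. A more directly geometric alternative (closer in spirit to other arguments in the paper) would take a generic null-homotopy $h\colon D^3\to M$ of $\sigma$, note that $h^{-1}(G)$ is a closed 1-submanifold of $D^3$, and eliminate its circles one at a time via innermost-disk moves in $G\cong S^2$, appealing to the local vanishing $\pi_3(D^4,D^4\smallsetminus D^2)=0$ to reroute $h$ off $G$ inside a regular neighbourhood of each innermost disk; the hardest part in either route is bookkeeping to ensure that an elimination step neither reintroduces intersections elsewhere nor disturbs the innermost status of the remaining circles, which is handled by standard general position.
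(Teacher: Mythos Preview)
Your overall strategy matches the paper's: reduce to a rel-boundary homotopy of disks in the complement of $G$, glue to a sphere $\sigma$, and show $[\sigma]=0$ by proving $\pi_2(M\smallsetminus G)\hookrightarrow\pi_2 M$ via the long exact sequence of the pair and excision. The only substantive difference is cosmetic---you pass to the universal cover and use the Thom isomorphism where the paper uses $\Z\pi_1$-coefficients and Lefschetz duality; these are equivalent computations.

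There is, however, a genuine gap at the step ``the image of $[\sigma]$ in $\pi_2 M$ equals $[R]-[R']=0$.'' The hypothesis gives only a \emph{free} homotopy $R\simeq R'$, so a priori $[R']=g\cdot[R]$ for some $g\in\pi_1(M,z)$ and hence $[\sigma]=(1-g)\cdot[R]$, which need not vanish. The paper devotes its first two paragraphs to closing exactly this gap: one takes the free homotopy $H$ transverse to $G$, observes that $L:=H^{-1}(G)\subset S^2\times I$ is a $1$-manifold with boundary $z_0\times\{0,1\}$, and uses the arc component $L_0$ joining these endpoints to see that the track of the basepoint is carried by $H(L_0)\subset G\cong S^2$ and is therefore null-homotopic, forcing $g=1$. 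This step genuinely uses the geometric dual $G$ and cannot be skipped; indeed the paper remarks after the proof that this lemma is precisely why free versus based homotopy agree in the presence of a common dual. Once you insert this argument, your proof is complete and essentially identical to the paper's.
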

\begin{proof}
We first show that $R,R'$ are base point preserving homotopic, noting that they both send a base-point $z_0\in S^2$ to $z=R\cap G=R'\cap G$ and hence represent elements $[R],[R']\in\pi_2(M,z)$. Any free homotopy $H$ from $R$ to $R'$ identifies $[R']$ with $g\cdot [R]$, where the loop $H(z_0\times I)$ represents $g\in \pi_1(M,z)$ and we use the $\pi_1$-action on $\pi_2$. 

Now take a free homotopy $H$ that is transverse to $G\subset M$ and consider the submanifold $L:=H^{-1}(G)\subset S^2 \times  I$. $L$ is a 1-manifold with boundary $z_0 \times \{0,1\}$ since $R$ and $R'$ intersect $G$ exactly in $z\in M$. This implies that $L$ has a component $L_0$ which is  homotopic (in $S^2 \times I$) to $z_0 \times  I$ rel endpoints. As a consequence, the above group element $g$ is also represented by $H(L_0)\subset G \cong S^2$ and hence $[R]=[R']$. 

Removing an open normal bundle $\nu G$ of $G$ leads to a 4-manifold $W:= M \smallsetminus \nu G$ with a new boundary component $\partial_0 W\cong S^2 \times S^1$. $W$ contains two embedded disks $D$ and $D'$ with the same boundary in $\partial_0 W$. These disks complete to the spheres $R$ and $R'$ when adding $\nu G$ back into the 4-manifold. 

We claim that $D$ and $D'$ are homotopic rel boundary in $W$ by the homological argument below. Granted this fact, we see from the above discussion that there is also a regular homotopy rel boundary from $D$ to $D'$ in $W$. Approximating it by a generic map we obtain the desired type of homotopy in $M\smallsetminus \nu G$.

To show that $D$ and $D'$ are homotopic rel boundary in $W$, it suffices to show that the glued up sphere $S:=D\cup_{\partial} D'$ is null homotopic in $W$. Since $R$ intersects $G$ in a single point, it follows from Seifert-van Kampen that the inclusion induces an isomorphism $\pi_1W \cong \pi_1M$, with base-points taken on $\partial_0 W$.
The long exact sequence of the pair $(M,W)$ for homology with coefficients in $\Z\pi_1W$ gives exactness for
\[
H_3(M,W;\Z\pi_1W) \ra H_2(W;\Z\pi_1W) \ra H_2(M;\Z\pi_1M).
\]
The Hurewicz isomorphism identifies the map on the right hand side with $\pi_2W\ra \pi_2M$ which sends $S$ to zero by our conclusion on $R,R'$ being based homotopic. By excision and Lefschetz duality,
\[
H_3(M,W;\Z\pi_1W) \cong H_3(S^2 \times D^2, S^2 \times S^1;\Z\pi_1W)\cong H^1(S^2 \times D^2;\Z\pi_1W) =0
\]
which implies that $[S]=0$.
\end{proof}
We note that Lemma~\ref{lem:based} is the reason why free (versus based) homotopy and isotopy agree in the presence of a common dual, and in particular, why we don't have to divide out by the conjugation action of $\pi_1M$ in Theorem~\ref{thm:4d-light-bulb}.

In the rest of the paper, we will turn a sequence of finger moves and Whitney moves as in Lemma~\ref{lem:based} into an isotopy, provided the Freedman--Quinn invariant vanishes. If $f:S^2\imra M$ is the {\em middle level} of such a sequence, i.e.\ the result of all finger moves on $R$, then there are two {\em clean collections of Whitney disks} for $f$ in $M$: One  collection $\cW$ reverses all the finger moves and leads back to $R=f_\cW$, and the other collection $\cW'$ does the interesting Whitney moves to arrive at $R'=f_{\cW'}$. 

Thus the triple $(f,\cW,\cW')$ represents the entire homotopy from $R$ to $R'$ up to isotopy. By construction, each of the two collections of Whitney disks is {\em clean} in the sense of Definition~\ref{def:clean} which formalizes the above discussion. In particular, since the result of Lemma~\ref{lem:based} is a homotopy in the complement of $G$, the notion of clean Whitney disk will include disjointness from $G$.
Then all our maneuvers will stay in the complement of $G$, explaining the last sentence in Theorem~\ref{thm:4d-light-bulb}.

\subsection{Self-intersection invariants}\label{sec:mu}
Let $M$ be a smooth oriented $4$--manifold and let $f:S^2\imra M$ be a generic sphere with a whisker from the base point of $M$ to $f$. A loop in $f(S^2)$ that changes sheets exactly at one self-intersection $p$ is called a {\it double point loop} at $p$. After choosing an orientation of the double point loop, it determines an element $g\in\pi_1M$ associated to $p$.
The orientation of a double point loop corresponds to a \emph{choice of sheets} at $p$, i.e.~a choice of a point $x\in f^{-1}(p)$ that is the starting point of the preimage of the loop.

The self-intersection invariant $\mu(f)\in \Z[\pi_1M]/ \langle g-g^{-1} \rangle $ is defined by summing the group elements represented by double point loops of $f$, with the coefficients coming from the usual signs determined by the orientation of $M$.
The relations $g-g^{-1}=0$ in the integral group ring account for the above choices of sheets.

Then $\mu(f)$ is invariant under regular homotopies of $f$ and changes by $\pm 1$ under a cusp homotopy. 
Therefore, taking $\mu(f)$ in a further quotient that also sets the identity element $1\in\pi_1M$ equal to $0$ makes the resulting {\it reduced} self-intersection invariant $\widetilde\mu(f)$ invariant under arbitrary based homotopies of $f$. The vanishing of $\widetilde \mu(f)$ in fact only depends on the unbased homotopy class of $[f]$.

The analogous reduced self-intersection invariant defined for generic $3$-spheres in $6$--manifolds will  be relevant in Section~\ref{sec:FQ}.

\subsection{Whitney disks and Whitney moves}\label{sec:clean-w-disks-moves}
Suppose that a pair $p^\pm$ of oppositely-signed self-intersection points of $f:S^2\imra M$ have equal group elements for some choices of sheets at $p^+$ and $p^-$. 
Then the pair $p^\pm$ admits an embedded null-homotopic \emph{Whitney circle}  $\alpha\cup\beta=f(a)\cup f(b)$ for disjointly embedded arcs $a$ and $b$ joining the preimages $x^+,y^+$ and $x^-,y^-$ of $p^+$ and $p^-$, as in Figure~\ref{fig:w-arcs-disk-move-group-element-1}. 
Such $\alpha$ and $\beta$ are called \emph{Whitney arcs}.
\begin{figure}[ht!]
        \centerline{\includegraphics[scale=.3]{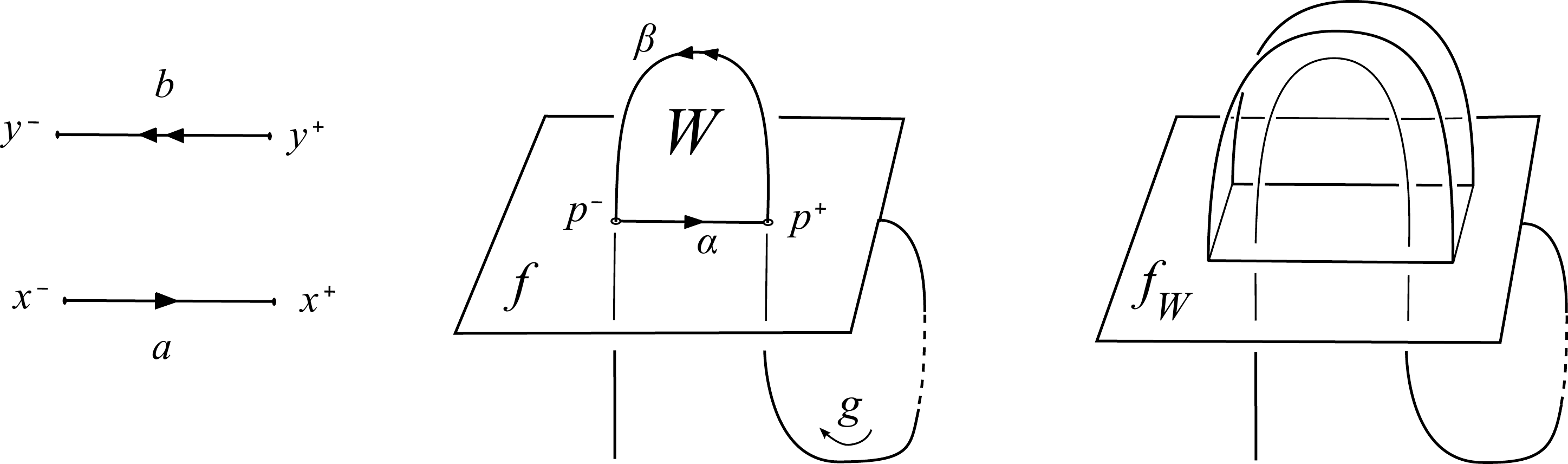}}
        \caption{Left: In the domain of $f$. Center: The horizontal sheet of $f$ appears in the `present' as does the Whitney disk $W$, and the other sheet of $f$ appears as an arc which is understood to extend into `past and future', with the dashed part indicating where $f$ extends outside the pictured $4$--ball neighborhood of $W$ in $M$. Right: After the Whitney move guided by $W$.}
        \label{fig:w-arcs-disk-move-group-element-1}
\end{figure}

The center of Figure~\ref{fig:w-arcs-disk-move-group-element-1} also shows a \emph{Whitney disk} $W$ with boundary $\partial W=\alpha\cup\beta$ pairing self-intersections $p^\pm$ with group element $g\in\pi_1M$. 
The right side of Figure~\ref{fig:w-arcs-disk-move-group-element-1} shows the result $f_W$ of doing a \emph{Whitney move} on $f$ guided by $W$,
which is
an isotopy of one sheet of $f$, supported in a regular neighborhood of $W$, that eliminates the pair $p^\pm$.
Combinatorially, $f_W$ is constructed from $f$ by replacing a regular neighborhood of one arc of $\partial W$ with a \emph{Whitney bubble} over that arc. This Whitney bubble is formed from two parallel copies of $W$ connected by a curved strip which is normal to a neighborhood in $f$ of the other arc. Figure~\ref{fig:w-arcs-disk-move-group-element-1} shows $f_W$
using a Whitney bubble over $\alpha$. Although both these descriptions of $f_W$ involve a choice of arc of $\partial W$, up to isotopy $f_W$ is independent of this choice. 

The construction of an embedded Whitney bubble requires that $W$ is \emph{framed} (so that the two parallel copies used above do not intersect each other), and Whitney disks which do not satisfy the framing condition are called \emph{twisted} (see eg.~\cite[Sec.7A]{ST1}).

\subsection{Sliding Whitney disks}\label{sec:w-disk-slides}
We describe here an operation that ``slides'' Whitney disks over each other. 
This maneuver changes the Whitney arcs while preserving the isotopy class of the results of the Whitney moves, and will be used in the proof of the key Proposition~\ref{prop:common-arcs-w-moves}.
\begin{figure}[ht!]
        \centerline{\includegraphics[scale=.24]{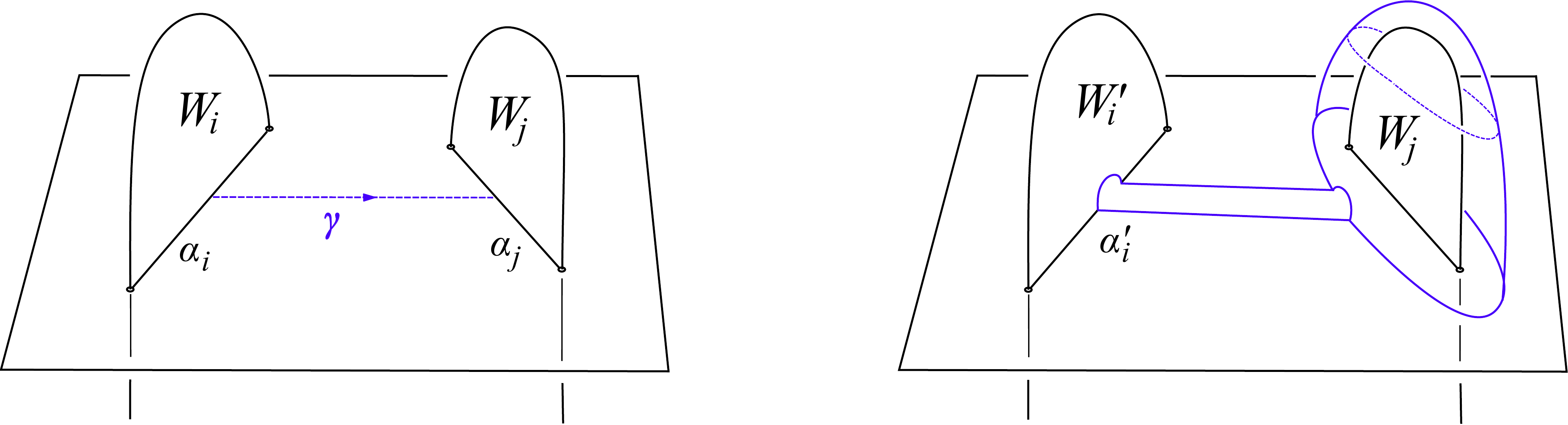}}
        \caption{Left: A path $\gamma$ guiding a slide of $W_i$ over $W_j$. Right: The result $W_i'$ of sliding $W_i$ contains the (blue) Whitney bubble $B_{\alpha_j}$ over $\alpha_j$.}
        \label{fig:w-disk-slide-1}
\end{figure}

Let $W_i$ and $W_j$ be two Whitney disks on $f$, and let $\gamma$ be an embedded path in $f$ from $\alpha_i\subset \partial W_i$ to
$\alpha_j\subset \partial W_j$ such that the interior of $\gamma$ is disjoint from any self-intersection of $f$ or Whitney arcs on $f$.
Denote by $W_i'$ the result of boundary-band-summing $W_i$ into a Whitney bubble $B_{\alpha_j}$ over $\alpha_j$ by a half-tube
along $\gamma$ as in Figure~\ref{fig:w-disk-slide-1}. 
We say that $W_i'$ is the result of \emph{sliding $W_i$ over $W_j$}.

To see that $f_{\{W_i',W_j\}}$ is isotopic to $f_{\{W_i,W_j\}}$, just observe that $W_i'$ becomes isotopic to $W_i$ after doing the $W_j$-Whitney move.
To see this in the coordinates of Figure~\ref{fig:w-disk-slide-1}, note that doing the $W_j$-Whitney move would either replace a horizontal disk of $f$ inside $B_{\alpha_j}\subset W_i'$ by a smaller Whitney bubble over $\alpha_j$, or would leave the same horizontal disk free of intersections by adding a Whitney bubble over $\beta_j$ to the other sheet of $f$.
So $W'_i$ isotopes back to $W_i$ across the smaller bubble or the horizontal disk.

Either of $\alpha_i$ and $\beta_i$ can be slid over either of $\alpha_j$ or $\beta_j$, and the isotopy class of the results of Whitney moves will be preserved as long as $i\neq j$.  
This sliding operation can be iterated:
\begin{lem}\label{lem:w-disk-slide-isotopy}
If a collection $\cW'$ of Whitney disks on $f$ is the result of performing finitely many slides ($i\neq j$) on a collection $\cW$,
then $f_{\cW'}$ is isotopic to $f_\cW$.\hfill$\square$
\end{lem}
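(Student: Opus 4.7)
The plan is to argue by induction on the number $n$ of slides needed to transform $\cW$ into $\cW'$, with the case $n=0$ being trivial.

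For the base case $n=1$, the strategy is to extend the single-slide discussion preceding the lemma from the two-disk collection $\{W_i,W_j\}$ to an arbitrary collection $\cW$ containing $W_i$ and $W_j$. The key point will be that the construction of the slid disk $W_i'$ is localized in a regular neighborhood of $W_i\cup\gamma\cup W_j$: since the interior of $\gamma$ is disjoint from all self-intersections and from all other Whitney arcs on $f$, this neighborhood can be arranged to miss every $W_k$ with $k\neq i,j$. Thus $\cW'$ agrees with $\cW$ outside the triple $(W_i,W_j,\gamma)$. One would then perform the $W_j$-Whitney move first---legitimate because Whitney moves supported in disjoint neighborhoods commute up to isotopy---and invoke the argument already recalled from the text to see that $W_i'$ isotopes back to $W_i$ across a smaller bubble or horizontal disk once the $W_j$-move has been executed. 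The remaining Whitney moves along the (now identical) disks and along the untouched $W_k$ can be performed in the same order for both collections and give ambient-isotopic results.

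For the inductive step, given a sequence $\cW=\cW_0\to\cW_1\to\cdots\to\cW_n=\cW'$ of single slides, the inductive hypothesis yields $f_{\cW_{n-1}}$ isotopic to $f_\cW$, and the base case applied to the final slide $\cW_{n-1}\to\cW_n$ yields $f_{\cW'}$ isotopic to $f_{\cW_{n-1}}$; transitivity of isotopy then completes the argument.

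The main point to verify, small but worth flagging, is that at each stage a slide produces a genuine Whitney disk collection on the \emph{same} generic sphere $f$: the new disk $W_i'$ must remain framed and must pair the same self-intersections as $W_i$ with the same group element. This should follow because the boundary-band-sum takes place along an arc in $f$, so the paired double points and their sheet choices are untouched, and because the Whitney bubble $B_{\alpha_j}$ inherits its framing from the framed $W_j$, so the framing of $W_i'$ agrees with that of $W_i$. Once this bookkeeping is in place, the lemma reduces entirely to iterated application of the single-slide case.
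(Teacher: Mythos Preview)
Your proposal is correct and takes essentially the same approach as the paper. Note that the paper states this lemma with a bare $\square$ and no proof body: the single-slide case is the paragraph immediately preceding the lemma (observing that $W_i'$ becomes isotopic to $W_i$ once the $W_j$-move has been performed), and the iteration is considered immediate. Your write-up simply makes explicit the induction, the locality argument allowing the two-disk discussion to apply inside a larger collection, and the bookkeeping that $W_i'$ remains a framed Whitney disk pairing the same self-intersections---all of which the paper leaves implicit.
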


Regarding the $i=j$ case, one can indeed slide $W_i$ over itself using a band from $\alpha_i\subset\partial W_i$ to the boundary of a Whitney bubble $B_{\beta_i}$ over $\beta_i$, and the result will still be a clean Whitney disk.
We don't believe that such a {\em self-slide} will preserve the isotopy class of $f_\cW$ in general (as it does in Lemma~\ref{lem:w-disk-slide-isotopy}). 
However, it will follow from Lemma~\ref{lem:w-disk-self-slide} that this self-sliding does indeed preserve the isotopy class of the result of the Whitney move in our current setting where $f$ is a sphere with a geometric dual.

\subsection{Tubing into the dual sphere}\label{sec:tubing-into-G}
For $G$ a geometric dual to $f$, a transverse intersection point $r$ between $f$ and a surface $D$ can be eliminated by tubing $D$ into $G$. This is known as the \emph{Norman trick} \cite{Nor} and is the main reason why dual spheres are so useful.
Here ``tubing $D$ into $G$'' means taking an ambient connected sum of $D$ with a parallel copy $G'$ of $G$ via a tube (an annulus) of normal circles over an embedded arc in $f$ that joins $r$ with an intersection point between $f$ and $G'$, see Figure~\ref{fig:tube-into-G-1}. 
Note that in the case that $D=f$ this operation involves a choice of which local sheet of $r$ to connect into.
\begin{figure}[ht!]
        \centerline{\includegraphics[scale=.25]{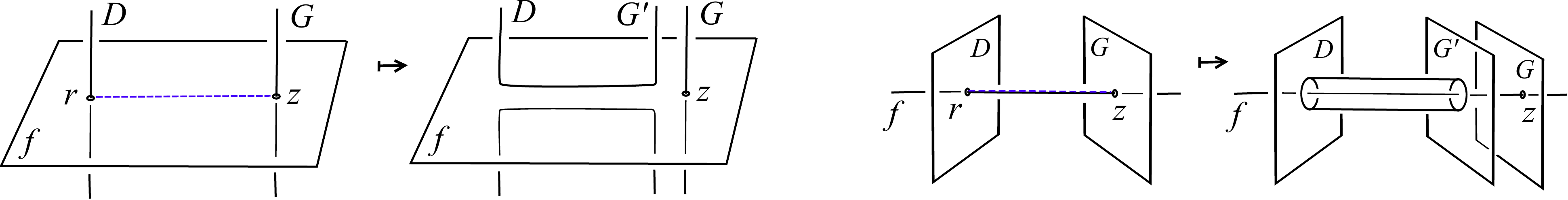}}
        \caption{Two views of the `tubing into $G$' operation to eliminate $r\in f\pitchfork D$, guided by a (blue dashed) path from $r$ to $z=f\cap G$.}
        \label{fig:tube-into-G-1}
\end{figure}

There are infinitely many pairwise disjoint copies of $G$ intersecting a small neighborhood around $z=f\cap G$ in $f$, so this procedure can be applied to eliminate any number of such intersections without creating new ones as long as appropriate guiding arcs for the tubes can be found. 
If a guiding arc intersects the boundary of a Whitney disk on $f$ then the corresponding tube around the arc will have an interior intersection with the Whitney disk, so we will always need to find guiding arcs that are disjoint from existing Whitney disk boundaries.

By varying the radii of the tubes, the guiding arcs can be allowed to intersect while keeping the tubes disjointly embedded.

\subsection{Clean collections of Whitney disks}\label{subsec:existence-of-disks}
Recall that for $f:S^2\imra M^4$, the vanishing of the self-intersection invariant
\[
\mu(f)=0\in\Z[\pi_1M]/\langle g-g^{-1} \rangle
\]
 is equivalent to the existence of choices of sheets so that all double points of $f$ can be arranged in pairs admitting null-homotopic Whitney circles (this statement is independent of the chosen whisker for $f$). 

\begin{defn}\label{def:clean}
A {\em clean collection of Whitney disks} for $f:S^2\imra M$ is a collection of Whitney disks that pair all double points of $f$ and are
framed, disjointly embedded, with interiors disjoint from $f$. In the presence of a dual sphere $G$ for $f$, this notion of a clean collection also includes the disjointness of the Whitney disks from $G$.
\end{defn}
Each Whitney disk in a clean collection is called a \emph{clean Whitney disk}. 

\begin{lem}\label{lem:choice-of-disks-exists}
If $f:S^2\imra M$ admits a geometric dual $G$, any collection of disjointly embedded Whitney circles that are null-homotopic in $M$ extends to a clean collection of Whitney disks.
\end{lem}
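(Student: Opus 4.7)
The plan is to bound each Whitney circle by an immersed disk in $M\smallsetminus G$ and then to clean the collection up using the Norman trick from Section~\ref{sec:tubing-into-G} with parallel copies of $G$. First observe that $\pi_1(M\smallsetminus G)\cong \pi_1 M$: deleting a small open disk around $z=f\cap G$ from $f(S^2)$ gives an immersed null-homotopy in $M\smallsetminus G$ of the meridian of $G$, so Seifert--van Kampen applied to $M=(M\smallsetminus G)\cup \nu G$ kills this meridian in the complement, showing that the inclusion is a $\pi_1$-isomorphism. After an arbitrarily small preliminary isotopy we may assume each given Whitney circle $\gamma_i$ avoids $z$. Then each $\gamma_i$ is null-homotopic in $M\smallsetminus G$ and bounds a generic immersed disk $W_i\imra M\smallsetminus G$; put the whole collection into general position so that self-intersections and mutual intersections of the $W_i$, and intersections of the $W_i$ with $f$, are all transverse double points lying in the interiors of the $W_i$.

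The argument then proceeds in three rounds, each reducing a controlled kind of pathology. First, for every interior double point $q$ (either a self-intersection of some $W_i$ or a point of $W_i\cap W_j$), take an arc $\alpha\subset W_i$ from $q$ to $\partial W_i\subset f$, extend $\alpha$ along $f$ to a transverse intersection $z'=f\cap G'$ with a fresh parallel copy $G'$ of $G$, and tube one local sheet at $q$ into $G'$ along $\alpha$. The result is again an immersed disk in $M\smallsetminus G$; it has one fewer interior double point, its only new singularities are additional transverse intersections with $f$ along the $f$-portion of $\alpha$, and it still bounds $\gamma_i$. Iterating renders the $W_i$ disjointly embedded. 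Second, boundary-twist each $W_i$ around one arc of $\partial W_i$ the required number of times to kill its relative framing obstruction, each twist contributing one further transverse intersection with $f$. Third, apply the classical Norman trick to each remaining $p\in W_i\cap f$: tube $W_i$ into a fresh parallel copy $G''$ along an arc in $f$ from $p$ to $f\cap G''$. With generic choices this removes $p$ without touching $G$ and without creating new intersections with $f$ or with the other $W_j$.

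The main obstacle is making sure that the three rounds do not interfere: each round must not create intersections of a kind already dealt with. This is arranged by performing them in the order above (interior resolutions before twists, twists before the final $f$-cleanup), by always using a fresh parallel copy of $G$ (which is automatically disjoint from $G$ and from previously used parallel copies), and by choosing guiding arcs generically with respect to the other $W_j$. Since every tube then terminates on some parallel copy of $G$, the final collection is entirely disjoint from $G$; since every unwanted intersection has been tubed away, it is framed, pairwise disjointly embedded, with interiors disjoint from $f$ --- that is, a clean collection of Whitney disks in the sense of Definition~\ref{def:clean}.
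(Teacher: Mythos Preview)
Your approach is essentially the paper's: take generic immersed disks bounding the given Whitney circles, resolve the intersections among the $W_i$, correct framings by boundary-twisting, and finally clear $W_i\cap f$ by tubing into parallels of $G$. Two differences are worth noting.

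First, you open by arguing $\pi_1(M\smallsetminus G)\cong\pi_1 M$ and take the initial disks directly in $M\smallsetminus G$. The paper instead allows the initial disks to hit $G$ and then removes those intersections by tubing the $W_i$ into parallel copies of $f$ along arcs in $G$ (the reverse Norman trick, at the cost of extra $W_i\cap W_j$ and $W_i\cap f$ intersections). Your opening is tidier and avoids that detour.

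Second, your round~1 is shakier than it needs to be. Tubing one sheet at $q\in W_i\cap W_j$ into a parallel $G'$ along an arc that runs first through the interior of $W_i$ and then through $f$ is not the standard Norman trick: the guiding arc changes the surface it lies in, so the ``normal circle'' framing of the tube must be interpolated across $\partial W_i$, and your assertion that the only new singularities are $f$-intersections along the $f$-portion of $\alpha$ is not justified as stated (indeed, near the transition point the normal $2$-plane to $W_i$ already meets $T_pf$ in a line, so some care is needed). The paper handles this step much more simply: push each such $q$ \emph{down into $f$} by a finger move on one local sheet along an arc in the other disk. This trades $q$ for two new points of $W_j\cap f$, which are then absorbed in your round~3. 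Swapping this in for your round~1 makes the argument clean and matches the paper's proof.
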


\begin{proof}
Start with a collection of generic disks $W_i$ bounded by the given null-homotopic Whitney circles that may intersect $G$, may be twisted, and may have interior intersections with $f$ and each other.

Note that the complement in $S^2$ of the union of the preimages $\{a_i,b_i\}$ of the Whitney circles is connected, and that there exist disjointly embedded tube-guiding paths in the complement of the Whitney circles between any number of isolated points and points near~$z$. 

We describe how to modify the $W_i$ relative their boundaries, without renaming them as changes are made: 

First of all, each $W_i$ can be made disjoint from $G$ by tubing $W_i$ into parallel copies of $f$ along disjoint arcs in $G$. 
Since $f$ is immersed with possibly non-trivial normal bundle, this tubing operation is in general more traumatic than the ``tubing into $G$'' operation described in Section~\ref{sec:tubing-into-G} and creates interior intersections between the $W_i$ and $f$, as well as intersections among the $W_i$.

Next, the intersections and self-intersections among the $W_i$ can be eliminated by pushing each such point down into $f$ by a finger move, and boundary-twists make the $W_i$ framed \cite[Chap.1.3]{FQ}, both at the cost of only creating more interior intersections between Whitney disks and $f$. 

Finally, the interiors of the $W_i$ can be made disjoint from $f$ by tubing the $W_i$ into $G$ along disjoint paths in $f$. Since $G$ is embedded and has trivial normal bundle the $W_i$ are still framed and disjoint from $G$, i.e.\ they form a clean collection of Whitney disks $\cW$ bounded by the original Whitney circles.
\end{proof}
\begin{rem}\label{rem:partial-collection}
The proof of Lemma~\ref{lem:choice-of-disks-exists} shows that if any subcollection of Whitney circles bound clean Whitney disks, then these same Whitney disks can be extended to a clean collection of Whitney disks by applying the construction to the remaining Whitney circles. 
\end{rem}

For any given collection $\cW$ of clean Whitney disks we
denote by $D_z\subset f(S^2)$ a small embedded disk around $z=f(S^2)\cap G$ such that each point in $D_z$ intersects a parallel of $G$ disjoint from $\cW$. 
The radius of $D_z$ is less than 
the minimum of the radii of the finitely many normal tubes around arcs in $G$
used in the first step of the proof of Lemma~\ref{lem:choice-of-disks-exists}, but our modifications of Whitney disk collections will only use the existence of $D_z$ not its diameter.

The minimum of the radii of the finitely many normal tubes around arcs in $f$
used in the last step of the proof of Lemma~\ref{lem:choice-of-disks-exists} gives a uniform lower bound on the distance between $f$ and the complements of small boundary collars of all Whitney disks in $\cW$.
Subsequent modifications of $\cW$ by tubing into $G$ along $f$ will always be assumed to use tubes of radius less than this bound, so as long as tubes are away from Whitney disk boundaries the tubes' interiors will be disjoint from Whitney disk interiors.

 \begin{figure}[ht!]
        \centerline{\includegraphics[scale=.28]{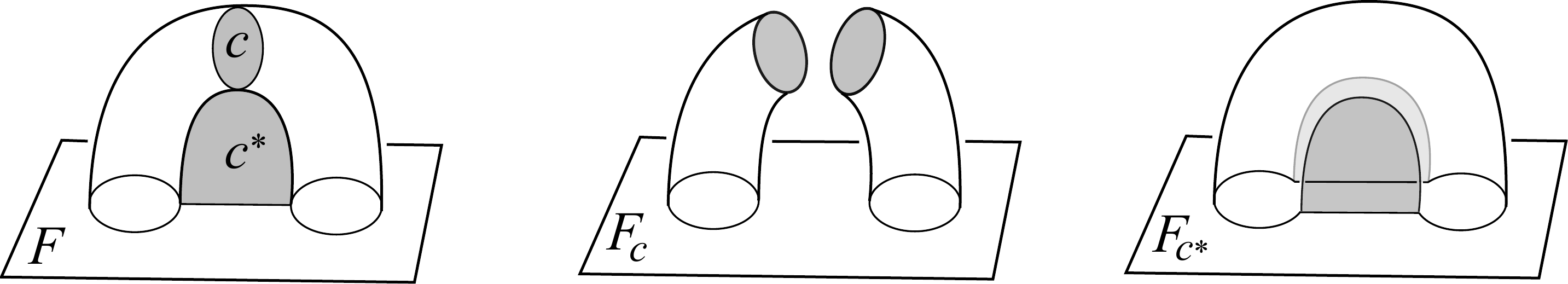}}
        \caption{A small neighborhood in $\R^3$ of $F\cup c\cup c^\ast$ on the left is diffeomorphic to $D^2 \times I$, in a way that $D^2 \times \{0\}\cong F_c$ and $D^2 \times \{1\} \cong F_{c^\ast}$. Hence the two surgeries in the center and right are isotopic.}
        \label{fig:capped-surface-surgery-1}
\end{figure}

\subsection{Capped surfaces and Whitney moves}\label{sec:capped-surface-w-move}
A \emph{cap} on a generic orientable surface $F$ in $M$ is a $0$-framed embedded disk $c$ such that the boundary $\partial c$ is the image of a non-separating simple closed curve in the domain of $F$, and the interior of $c$ is disjoint from $F$. Here ``$0$-framed'' means that a parallel copy $\partial c'\subset F$ of $\partial c$ bounds a cap $c'$ such that $c'\cap c=\emptyset$.  

Two caps on $F$ are \emph{dual} if their boundaries intersect in a single point and their interiors are disjoint. 
A collection $\cC$ of pairwise disjoint caps on $F$ is \emph{dual} to another collection $\cC^\ast$ of pairwise disjoint caps on $F$ if the interiors of all caps $\cC\cup \cC^\ast$ are pairwise disjoint, and the set of all cap boundaries is a geometric symplectic basis for the first homology of $F$ (the cap boundaries intersect geometrically $\delta_{ij}$ in $F$).

For a collection 
$\cC$ of disjoint caps on $F$, we denote by $F_\cC$ the result of surgering $F$ using all the caps of $\cC$. 

The following lemma can be proved by considering an isotopy of a standard model in $3$-space that passes through the symmetric surgery on both sets of caps (see Figure~\ref{fig:capped-surface-surgery-1} and \cite[Sec.2.3]{FQ}): 
\begin{lem}\label{lem:capped-surface-isotopy}
If $\cC$ and $\cC^\ast$ are dual collections of caps on $F$ then $F_\cC$ is isotopic to $F_{\cC^\ast}$ by
an isotopy supported near the union $F\cup\cC\cup\cC^\ast$.
\end{lem}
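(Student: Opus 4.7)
The plan is to reduce the statement to a three-dimensional standard model in which the two surgered surfaces appear as two parallel cross-sections of a product region, so that the isotopy between them is manifest.

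First I would reduce to the case of a single dual pair of caps. Write $\cC=\{c_1,\ldots,c_g\}$ and $\cC^\ast=\{c_1^\ast,\ldots,c_g^\ast\}$ with $\partial c_i\cdot\partial c_j^\ast=\delta_{ij}$ and all interiors disjoint. Then for each $i$, the pair $(c_i,c_i^\ast)$ together with a small piece of $F$ containing $\partial c_i\cup\partial c_i^\ast$ can be isolated inside an open neighborhood $N_i$ that is disjoint from all caps outside the $i$th pair (their boundaries are disjoint on $F$, and the interiors are disjoint in $M$). Moreover, performing a surgery in $N_i$ leaves the other cap boundaries, their 0-framings, and their dual intersection pattern unchanged, since the surgery trace lies entirely in $N_i$. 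Granting the single-pair case, iterating one pair at a time then gives an ambient isotopy from $F_\cC$ to $F_{\cC^\ast}$, supported in $\bigcup_i N_i$.

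Second I would establish the three-dimensional standard model for a single pair $(c,c^\ast)$. Let $F_0\subset F$ be a small once-punctured-torus piece of $F$ containing $\partial c\cup\partial c^\ast$. The 2-complex $X:=F_0\cup c\cup c^\ast$ is obtained from $F_0$ by attaching 2-cells along a symplectic basis of $H_1(F_0)$ and is therefore collapsible (homotopy equivalent to a disk). Because $c$ and $c^\ast$ are 0-framed with framings compatible with the surface framing of $F$ along their boundaries, and because $F_0$ itself has a trivial normal line bundle locally, the normal data along $X$ assembles into a trivial $\mathbb{R}$-bundle over a three-dimensional regular neighborhood of $X$. This identifies a neighborhood of $X$ in $M$ with $B^3\times\mathbb{R}$, where $B^3\subset\mathbb{R}^3$ is the regular neighborhood of the standard model $X_0$ (a punctured torus with its two standard caps) depicted on the left of Figure~\ref{fig:capped-surface-surgery-1}.

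Third, working inside the 3-ball $B^3$, I would identify $B^3$ with $D^2\times I$ in such a way that the surgered surface $F_c$ corresponds to $D^2\times\{0\}$ and $F_{c^\ast}$ corresponds to $D^2\times\{1\}$ (the center and right of Figure~\ref{fig:capped-surface-surgery-1}). The straight-line isotopy $D^2\times\{t\}$, $t\in[0,1]$, then provides the required isotopy inside $B^3$; extending by the identity in the normal $\mathbb{R}$-direction yields an ambient isotopy of $M$ carrying $F_c$ to $F_{c^\ast}$ and supported in $B^3\times\mathbb{R}$. The main obstacle is the second step: producing the product structure $B^3\times\mathbb{R}$ for the neighborhood of $X$ in $M$. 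All three ingredients conspire there (collapsibility of $X$, 0-framing of the caps, and the surface framing of $F_0$), and the output is precisely the rigid model that makes the isotopy in the third step visible and canonical, as in the standard treatment in \cite[Sec.2.3]{FQ}.
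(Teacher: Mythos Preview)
Your proposal is correct and follows essentially the same route as the paper: reduce to a single dual pair of caps, identify a regular neighborhood of $F_0\cup c\cup c^\ast$ with a standard $3$-dimensional model (times $\mathbb{R}$), and read off the isotopy from the product structure $D^2\times I$ as in Figure~\ref{fig:capped-surface-surgery-1} and \cite[Sec.~2.3]{FQ}. The paper's version is terser---it simply cites the figure and the symmetric surgery picture from \cite{FQ}---but your expanded outline (including the explicit reduction to one pair and the framing discussion justifying the $B^3\times\mathbb{R}$ model) fills in exactly the details that the paper leaves implicit.
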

 
Lemma~\ref{lem:capped-surface-isotopy}, together with the presence of the geometric dual $G$, yields the following simple but useful correspondence between Whitney moves and surgeries:

Let $W$ be a clean Whitney disk on $f$ with $\partial W=\alpha\cup\beta$ (possibly one of a collection $\cW$ of Whitney disks on $f$), and let $F:T^2 \imra M$ be the result of tubing $f$ to itself along $\beta$.
Observe that a cap $c_W$ on $F$ can be constructed from $W$ by deleting a small boundary collar near $\beta$, and 
$F_{c_W}$ is isotopic to $f_W$ (Figure~\ref{fig:tube-caps-1}).
 \begin{figure}[ht!]
        \centerline{\includegraphics[scale=.31]{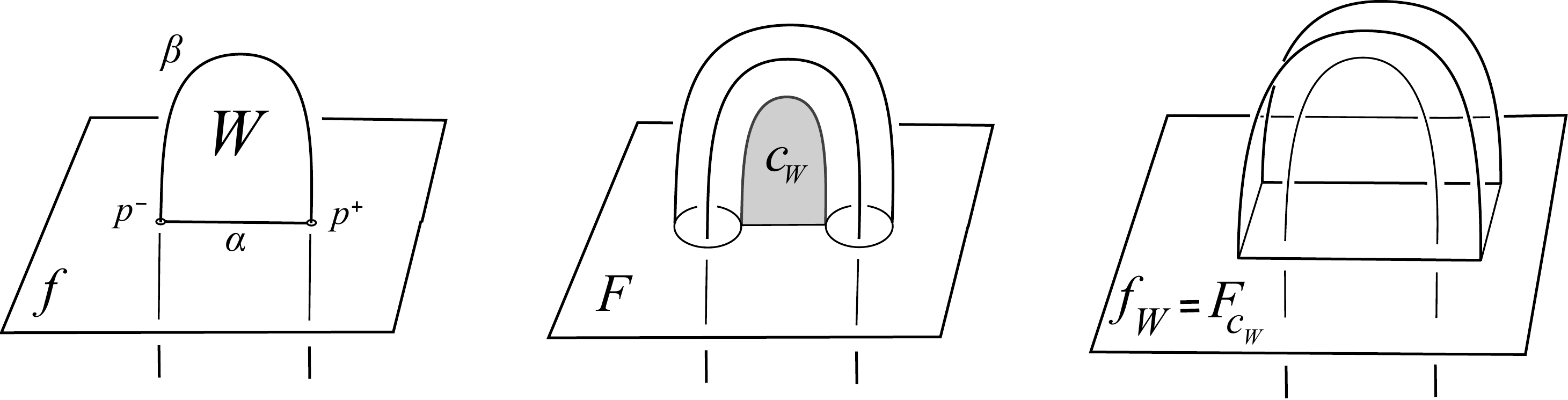}}
        \caption{}
        \label{fig:tube-caps-1}
\end{figure}
 \begin{figure}[ht!]
        \centerline{\includegraphics[scale=.31]{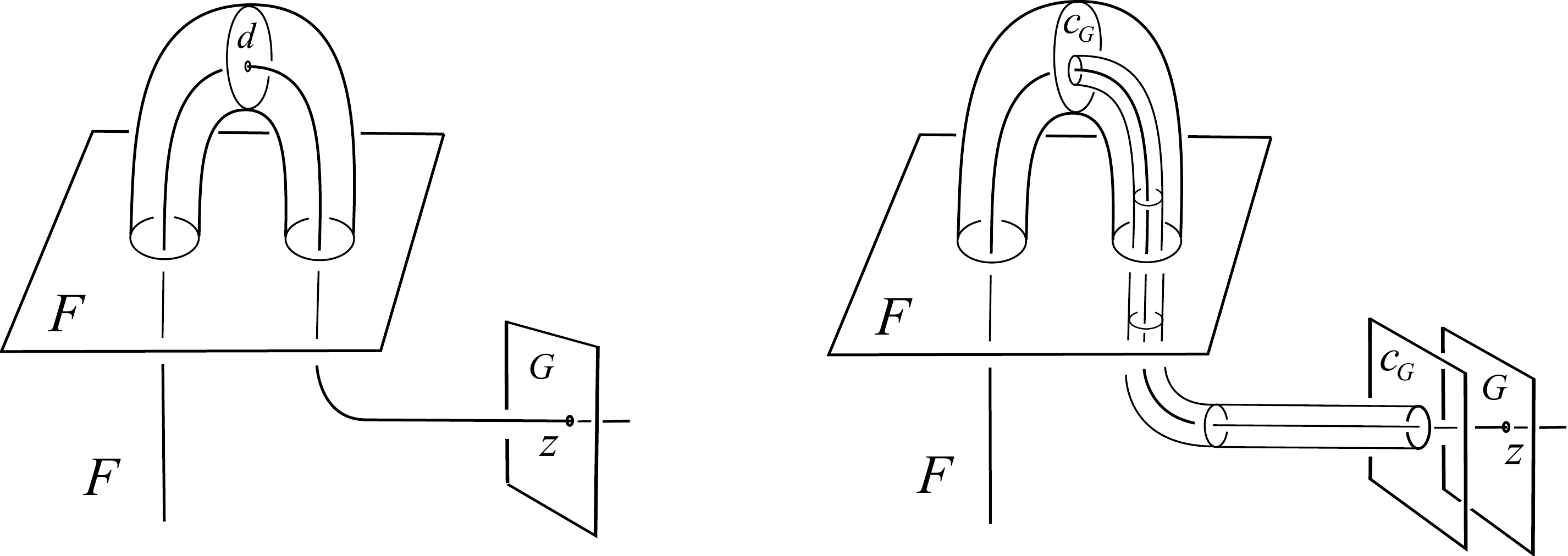}}
        \caption{}
        \label{fig:tube-caps-2}
       \end{figure}

Now we construct a cap $c_G$ on $F$ which is dual to $c_W$.
Start with a meridional disk $d$ to $F$ which has a single transverse intersection $r=d\pitchfork F\in\beta$ and $\partial d\subset F$ (Figure~\ref{fig:tube-caps-2} left). Note that $G$ is a geometric dual to $F$.  Then $c_G$ is the result of eliminating $r$ by tubing $d$ into $G$ along an embedded arc in $F$, disjoint from $c_W$ and $\partial d$ (and any other Whitney disks), running from $r$ to a point where a parallel copy of $G$ intersects $F$, see right Figure~\ref{fig:tube-caps-2}. Such an embedded arc exists since the complement of $\partial W$ is connected (as is the complement of $\partial \cW$). Since $c_G$ and $c_W$ are dual caps, Lemma~\ref{lem:capped-surface-isotopy} gives:

\begin{lem}\label{lem:W-move-equals-surgery-on-tube-cap}
If $F$ is the result of tubing $f$ to itself along one Whitney arc of a clean Whitney disk $W$, and $c_G$ is a cap on $F$ gotten by tubing a meridional disk dual to the Whitney arc into $G$ as above, then $f_W$ is isotopic to $F_{c_G}$.$\hfill\square$
\end{lem}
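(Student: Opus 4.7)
The plan is to chain two isotopies that the setup preceding the lemma has essentially prepared. First, the paragraph immediately above (together with Figure~\ref{fig:tube-caps-1}) records that $f_W\simeq F_{c_W}$, where $c_W\subset W$ is the cap on $F$ obtained by deleting a small boundary collar of $W$ along $\beta$. Second, Lemma~\ref{lem:capped-surface-isotopy} yields $F_{c_W}\simeq F_{c_G}$ provided $\{c_W,c_G\}$ is a dual pair of caps on $F$. Composing these two isotopies gives the claim $f_W\simeq F_{c_G}$.

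Accordingly, the substantive work reduces to verifying the two duality conditions on $c_W$ and $c_G$. For the boundary condition $|\partial c_W\cap\partial c_G|=1$: after the collar is deleted, $\partial c_W$ runs once longitudinally along the tube of $F$ formed from the neighborhood of $\beta$. By construction $\partial c_G=\partial d$ is a meridional circle of this tube, linking it once at the point $r\in\beta$ where $d$ originally met $F$; the Norman tubing that produces $c_G$ from $d$ only modifies the interior of $d$ and hence leaves $\partial d$ meridional. Thus the two boundary curves meet transversely in a single point on $F$.

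For interior disjointness $\mathring{c}_W\cap\mathring{c}_G=\emptyset$: the interior of $c_W$ is essentially $\mathring W$, so by cleanness of $W$ it is disjoint from both $f$ and $G$. The interior of $c_G$ decomposes as (i) $d$ minus a small disk around $r$, which we can take inside a thin normal neighborhood of $\beta$ in $M$ and hence disjoint from $\mathring W$ (the latter being disjoint from $f$ itself); (ii) the thin normal tube of the guiding arc in $F$, which by stipulation is disjoint from $\partial c_W$ and can be taken narrow enough to avoid $\mathring{c}_W$; and (iii) the parallel pushoff $G'$ of $G$ with a small disk removed, which is disjoint from $\cW$ by the cleanness of the collection relative to $G$. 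The one step requiring minor care is (ii), and this is precisely why the construction of $c_G$ demanded the guiding arc be disjoint from $c_W$.

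No substantial obstacle arises; once the dual caps are in place, the lemma is an immediate consequence of Lemma~\ref{lem:capped-surface-isotopy} and the model isotopy depicted in Figure~\ref{fig:capped-surface-surgery-1}. The role of the geometric dual $G$ in the whole argument is simply to convert the meridional disk $d$ (which would otherwise pierce $F$ at $r$) into a genuine cap via the Norman trick, so that the symmetric surgery argument applies.
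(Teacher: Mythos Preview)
Your proof is correct and follows precisely the same approach as the paper: chain the observation $f_W\simeq F_{c_W}$ with Lemma~\ref{lem:capped-surface-isotopy} applied to the dual pair $\{c_W,c_G\}$. The paper in fact gives no separate proof (the lemma carries a $\square$), since the preceding two paragraphs already set up $c_W$ and $c_G$ and assert their duality by construction; you have simply made the verification of the duality conditions more explicit than the paper does.
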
 
So if two Whitney disks $W$ and $W'$ on $f$ have equal Whitney circles $\partial W=\partial W'$, then $f_W$ is isotopic to $f_{W'}$ since each is isotopic to surgery $F_{c_G}$ on a common dual cap $c_G$ to both of the caps $c_W$ and $c_{W'}$ as in Lemma~\ref{lem:W-move-equals-surgery-on-tube-cap}. And since the complement in $f$ of the Whitney circles of a clean collection of Whitney disks is connected we have:
\begin{lem}\label{lem:choice-of-disks}
If $\cW$ and $\cW'$ are clean collections of Whitney disks for the self-intersections of $f$ such that $\partial\cW=\partial\cW'$, then $f_\cW$ is isotopic to $f_{\cW'}$.$\hfill\square$
\end{lem}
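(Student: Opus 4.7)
The plan is to generalize Lemma~\ref{lem:W-move-equals-surgery-on-tube-cap} from the single-Whitney-disk setting to a collection, realizing both $f_\cW$ and $f_{\cW'}$ as surgeries on a common higher-genus capped surface $F$ and then applying Lemma~\ref{lem:capped-surface-isotopy} cap-by-cap.

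First I would fix, for each Whitney circle $\partial W_i = \alpha_i \cup \beta_i$ common to $\cW$ and $\cW'$, a choice of ``tubing arc'' $\beta_i$ and form a single surface $F:\Sigma_g\imra M$ by tubing $f$ to itself along all the $\beta_i$ simultaneously, where $g$ is the number of Whitney disks. (The tubes are disjointly embedded since the $\beta_i$ are disjoint Whitney arcs on the embedded complement of the self-intersections.) Each $W_i \in \cW$ then yields a cap $c_{W_i}$ on $F$ by deleting a collar near $\beta_i$, and the whole collection $\{c_{W_i}\}$ is disjoint (since $\cW$ is clean) and surgery $F_{\{c_{W_i}\}}$ is isotopic to $f_\cW$, exactly as in the single-disk case. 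The analogous statement with $\cW'$ gives $F_{\{c_{W_i'}\}} \simeq f_{\cW'}$.

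Next I would construct a collection $\{c_{G_i}\}$ of mutually disjoint caps on $F$ that is dual to both $\{c_{W_i}\}$ and $\{c_{W_i'}\}$ simultaneously. For each $i$ start with a meridional disk $d_i$ to $F$ intersecting $\beta_i$ once, so that $d_i$ is geometrically dual to both $c_{W_i}$ and $c_{W_i'}$ and disjoint from the other $c_{W_j}, c_{W_j'}$ for $j\neq i$. Then apply the Norman trick of Section~\ref{sec:tubing-into-G}, tubing $d_i$ into a parallel copy of $G$ along an embedded arc in $F$. The key point is that the complement in $f$ of the union $\partial\cW = \partial\cW'$ of Whitney circles is connected (since each Whitney arc is a properly embedded arc in a sphere), so such guiding arcs exist and can be chosen disjoint from each other and from $\partial\cW \cup \partial\cW'$. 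Using parallel copies of $G$ of appropriately small radii around $D_z$, the resulting caps $c_{G_i}$ are disjointly embedded, have interiors disjoint from $F$, and form a collection dual to both $\{c_{W_i}\}$ and $\{c_{W_i'}\}$.

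Finally I would apply Lemma~\ref{lem:capped-surface-isotopy} twice to conclude
\[
f_\cW \;\simeq\; F_{\{c_{W_i}\}} \;\simeq\; F_{\{c_{G_i}\}} \;\simeq\; F_{\{c_{W_i'}\}} \;\simeq\; f_{\cW'}.
\]
The main obstacle I anticipate is bookkeeping to ensure that the paths guiding the Norman tubings of the $d_i$ into copies of $G$ can be chosen \emph{simultaneously} disjoint from $\partial\cW$, from each other, and from the previously constructed caps $c_{W_i}$ and $c_{W_i'}$; the connectedness of $f\smallsetminus\partial\cW$ together with the freedom to shrink tube radii (as discussed after Lemma~\ref{lem:choice-of-disks-exists}) should handle this, but it is the step that requires care.
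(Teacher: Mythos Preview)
Your proposal is correct and is exactly the argument the paper has in mind: the two sentences preceding the lemma explain the single-disk case via a common dual cap $c_G$, and the clause ``since the complement in $f$ of the Whitney circles of a clean collection of Whitney disks is connected'' is precisely the connectedness observation you use to find the tube-guiding arcs for all the $c_{G_i}$ simultaneously. Your write-up simply unpacks what the paper compresses into those two sentences.
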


\begin{lem}\label{lem:slide-w-disk-across-D-z}
For the Whitney circles $\cA=\partial\cW$ of a clean collection $\cW=\cup_iW_i$ of Whitney disks as in Definition~\ref{def:clean}, consider $\cA'$ which is the result of band summing a Whitney arc $\alpha_i\subset\partial W_i$ into a parallel of $\partial D_z$ along an arc $\gamma$ with interior disjoint from $\cA$ as in the left-most and right-most pictures in Figure~\ref{fig:slide-w-over-z}.
Then there exists a clean collection of Whitney disks $\cW'$ with $\partial\cW'=\cA'$ and such that $f_\cW$ is isotopic to $f_{\cW'}$.
\end{lem}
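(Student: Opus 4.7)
The plan is to construct $\cW'$ as the image of $\cW$ under an ambient isotopy of $M$ realizing the desired boundary change on $f$. The key geometric fact is that a parallel of $\partial D_z$ bounds a parallel copy of $D_z$ on $f(S^2)$, and by smallness $D_z$ is disjoint from $\cA$. Combined with the hypothesis that $\gamma$'s interior is disjoint from $\cA$, this yields an isotopy $h_s$ of $f(S^2)$ (in its domain $S^2$) supported in a small neighborhood $N$ of $\gamma\cup D_z$ in $f(S^2)$, disjoint from $\cA\smallsetminus \alpha_i$, with $h_0=\id$, $h_1(\alpha_i)=\alpha_i'$, and $h_s$ fixing the self-intersections of $f$ as well as the point $z$. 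Fixing $z$ is possible because the detour of $\alpha_i'$ around $\partial D_z$ can be collapsed along $\gamma$ without displacing the center.

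The main technical step is to extend $h_s$ to an ambient isotopy $H_s:M\to M$ that preserves $G$ setwise. Near $z$ the transversality $T_zM = T_zf\oplus T_zG$ provides local product coordinates in which $f$ and $G$ are the two coordinate 2-planes; extending $h_s$ by the formula $H_s(p,q):=(h_s(p),q)$ preserves $G$ pointwise because $h_s(z)=z$. Away from $z$ the support of $h_s$ on $f(S^2)$ is disjoint from $G$, so $h_s$ extends by the same product formula in a normal tube of $f(S^2)$ in $M$ along $\gamma\cup(D_z\smallsetminus\{z\})$, chosen thin enough to avoid the interiors of the other Whitney disks $W_j$ (which are disjoint from $f(S^2)$ by cleanness). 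Patching with the identity outside this tube yields the required $H_s$.

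Setting $\cW':=H_1(\cW) = \{W_j : j\neq i\}\cup\{H_1(W_i)\}$, the collection $\cW'$ is clean because $H_1$ is a diffeomorphism preserving $f$ setwise, $G$ setwise, and each $W_j$ ($j\neq i$) pointwise. Since $H_1|_{\cA\smallsetminus\alpha_i}=\id$ and $H_1(\alpha_i)=\alpha_i'$, we have $\partial\cW'=\cA'$ as required. By naturality of the Whitney move under ambient diffeomorphism, $H_1(f_\cW)=f_{H_1(\cW)}=f_{\cW'}$, so the ambient isotopy $H_s$ provides an isotopy $f_\cW\simeq f_{\cW'}$.

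The main subtle point is arranging $H_s$ to preserve $G$, which is secured by the local product structure at the transverse intersection $z$ together with the freedom to take $h_s$ fixing $z$. A more combinatorial alternative, following the cap-surgery viewpoint of Lemma~\ref{lem:W-move-equals-surgery-on-tube-cap}, would attempt to use a common dual cap $c_G$ on the tubed torus $F$ for both $c_{W_i}$ and $c_{W_i'}$; however, when the detour of $\alpha_i'$ encloses $D_z$, a tubing arc in $F$ from the meridional point to a $G$-parallel intersection in $D_z$ may be forced to cross $\alpha_i'$, so the direct ambient-isotopy construction is cleaner.
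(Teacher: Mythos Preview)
There is a genuine gap at the sentence ``Fixing $z$ is possible because the detour of $\alpha_i'$ around $\partial D_z$ can be collapsed along $\gamma$ without displacing the center.'' This is false. Inside your support $N$ (a disk neighborhood of $\gamma\cup D_{z_0}$ in $S^2$), the arcs $a_i\cap N$ and $a_i'\cap N$ have the same endpoints on $\partial N$, but $a_i'\cap N$ winds once around $z_0$ while $a_i\cap N$ does not. In the annulus $N\smallsetminus\{z_0\}$ these two proper arcs are not homotopic rel endpoints, so no isotopy of $N$ rel $\partial N$ that fixes $z_0$ can carry one to the other. Enlarging the support does not help: in $S^2$ with $z_0$ and the remaining Whitney arcs removed, the loop $\partial D_{z_0}$ bounds a disk on neither side (one side is punctured at $z_0$, the other contains $b_i$), so it is essential and $a_i,\,a_i'$ remain non-isotopic there. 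Consequently any $H_s$ that preserves $f(S^2)$ and carries $\alpha_i$ to $\alpha_i'$ must move $z$; then $H_1(G)\neq G$ and you lose control over $H_1(W_i)\cap G$, so you cannot conclude that $\cW'$ is clean. Invoking Lemma~\ref{lem:choice-of-disks} afterwards to replace $H_1(W_i)$ by a clean disk with the same boundary is circular, since that lemma requires both collections to already be clean.

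This obstruction is exactly why the paper does \emph{not} look for an ambient isotopy preserving $f$ and $G$. Instead it breaks $\alpha_i$ open at $D_z$, tubes $f$ into two oppositely oriented parallels of $G$ along the two halves, and works with the resulting immersed sphere $f^G_\gamma$ (which no longer has the double points $p_i^\pm$). Lemma~\ref{lem:W-move-equals-surgery-on-tube-cap} identifies both $f_{W_i}$ and $f_{W_i'}$ with $f^G_\gamma$, and the passage from $\alpha_i$ to $\alpha_i'$ is realized by an isotopy of $f^G_\gamma$ sliding the two tubes in opposite directions around $\partial D_z$. That tube-sliding isotopy is supported away from $G$, which is what your ambient-isotopy approach cannot achieve. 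Your final paragraph correctly identifies the obstruction to a naive common-$c_G$ argument; the paper's proof is precisely the workaround for it.
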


\begin{proof}
We break up the band sum operation into the three steps illustrated in Figure~\ref{fig:slide-w-over-z}:
Guided by $\gamma$, modify $\alpha_i$ by pushing a subarc slightly across $\partial D_z$, and extend this isotopy to a collar of $W_i$. 
The isotopy class of $f_\cW$ is unchanged since the collection $\cW$ changes by isotopy due to the disjointness of $\gamma$ from $\cA$.

Now delete from $\alpha_i$ the small (dashed) arc which is the intersection of $\alpha_i$ with the interior of $D_z$, and eliminate the oppositely signed self-intersections of $f$ that were paired by $W_i$ by tubing $f$ along the resulting pair of arcs into two oppositely oriented copies of $G$ which intersect $\partial D_z$ at the arcs' endpoints. See the second picture from the left in
Figure~\ref{fig:slide-w-over-z}.

\begin{figure}[ht!]
        \centerline{\includegraphics[scale=.25]{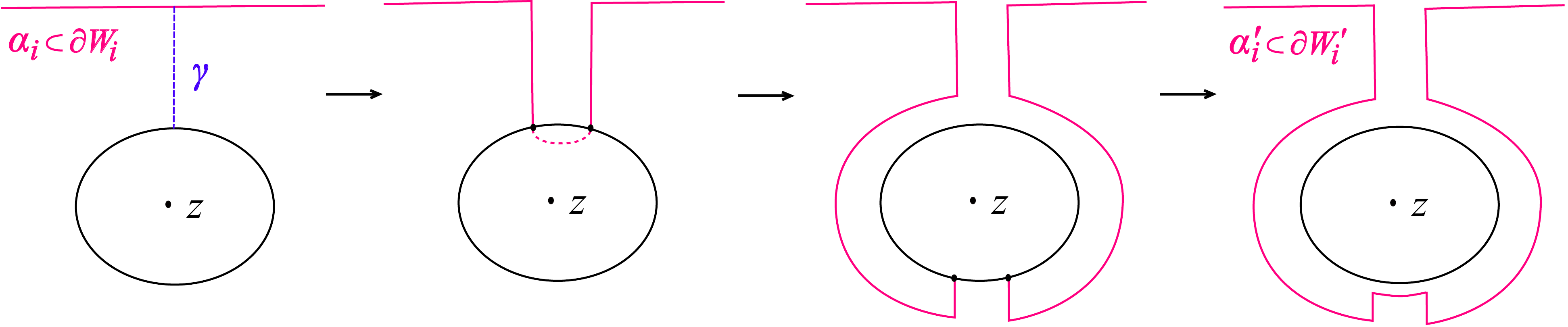}}
        \caption{}
        \label{fig:slide-w-over-z}
\end{figure} 

This yields an immersed sphere $f^G_\gamma$ which 
admits the clean collection of Whitney disks $\cV:=\cW \smallsetminus W_i$.
Note that by construction $f^G_\gamma$ is also the result of tubing $f$ to itself along the $\alpha_i$ that had been pushed into $D_z$ and then surgering the tube along a cap formed from a parallel copy of $G$
near where $\gamma$ meets $\partial D_z$. It follows from Lemma~\ref{lem:W-move-equals-surgery-on-tube-cap} that $f_{W_i}$ is isotopic to $f^G_\gamma$. 
Hence, $f_\cW$ is isotopic to $(f^G_\gamma)_{\cV}$.

Next, change $f^G_\gamma$ by an isotopy which moves the two tubes and the two parallels of $G$ contained in $f^G_\gamma$ in opposite directions around $\partial D_z$ as shown in the third picture from the left in
Figure~\ref{fig:slide-w-over-z}. 
Since we may assume that the two tubes have been chosen to have radii smaller than any previous tubes used to construct Whitney disks in $\cV$ this isotopy does not create any new intersections (see the second paragraph after Remark~\ref{rem:partial-collection}).
After this isotopy $f^G_\gamma$ still 
admits $\cV$, and the isotopy class of $(f^G_\gamma)_{\cV}$ is unchanged.

Now (re)connect the endpoints of the two guiding arcs of the tubes near the short subarc of $\partial D_z$ between the endpoints to get a single arc $\alpha'_i:=\alpha_i+_\gamma\partial D_z$ which is isotopic to the result of taking the band sum of $\alpha_i$ with $\partial D_z$ along $\gamma$ (see the right-most picture in
Figure~\ref{fig:slide-w-over-z}).
The resulting embedded Whitney circle $\alpha'_i\cup\beta_i$ is null-homotopic and disjoint from $\partial \cV$, so by Lemma~\ref{lem:choice-of-disks-exists} there exists a collection $\cW'$ of Whitney disks with $\partial \cW'=\alpha'_i\cup\beta_i\cup\partial \cV$.
As per Remark~\ref{rem:partial-collection}, the proof of Lemma~\ref{lem:choice-of-disks-exists} fixes $\cV$ while constructing a clean Whitney disk $W_i'$ bounded by $\alpha'_i\cup\beta_i$  
in the complement of $\cV$, so we have $\cW'=W'_i\cup\cV$.

It follows again by Lemma~\ref{lem:W-move-equals-surgery-on-tube-cap} that $f^G_\gamma$ is isotopic to $f_{W'_i}$, since $f^G_\gamma$ is isotopic to the result of tubing $f$ to itself along $\alpha'_i$ and then surgering a cap formed from a copy of $G$ near where the guiding arcs were reconnected.
Hence $f_{\cW'}$ is isotopic to $(f^G_\gamma)_{\cV}$, and we see that $f_\cW$ and $f_{\cW'}$ are isotopic. 
\end{proof}

\begin{figure}[ht!]
        \centerline{\includegraphics[scale=.2]{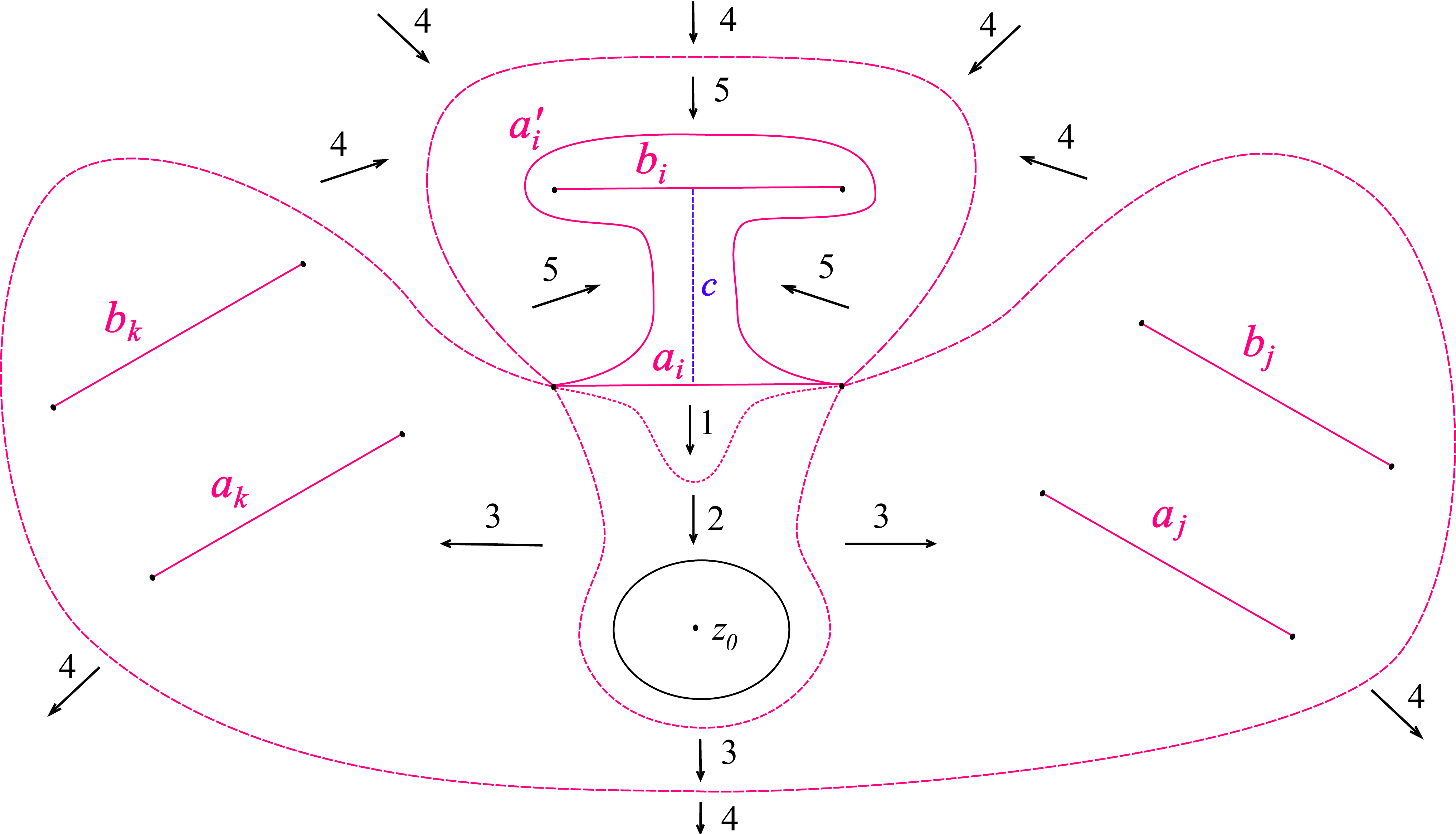}}
        \caption{}
        \label{fig:self-w-slide-dashed-arcs}
       \end{figure}
\begin{lem}\label{lem:w-disk-self-slide}
For a clean Whitney disk collection $\cW$ on $f:S^2\imra M^4$ with geometric dual $G$, if $\cW'$ is gotten from $\cW$ by sliding a Whitney disk over itself then $f_{\cW'}$ is isotopic to $f_\cW$.
\end{lem}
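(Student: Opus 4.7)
The plan is to apply the capped-surface framework of Lemma~\ref{lem:W-move-equals-surgery-on-tube-cap} to exhibit both $f_\cW$ and $f_{\cW'}$ as surgeries on a common surface $F$, along caps that share a common dual cap built from $G$.

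First I will reduce to the case $\cW=\{W\}$ of a single Whitney disk being self-slid. Since the clean collection $\cW$ has its disks disjointly embedded and disjoint from $G$, performing the Whitney moves along the $W_j$ for $j\neq i$ first does not affect $W_i$, its self-slide $W'_i$, or the duality with $G$ (all Whitney bubbles involved are supported in neighborhoods disjoint from the remaining Whitney disks and from $G$). It then suffices to show $f_W\simeq f_{W'}$ under the assumption that $f$ has a single pair $p^\pm$ of double points paired by a clean Whitney disk $W$ with geometric dual $G$, where $W'$ is the result of a self-slide of $W$.

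Without loss of generality the self-slide uses a band $\gamma\subset f$ from $\alpha$ to a Whitney bubble $B_\beta$ over $\beta$, so that $\partial W' = \alpha'\cup\beta$ with $\alpha' = \alpha+_\gamma\partial B_\beta$. I will tube $f$ to itself along $\beta$ to form the genus-one surface $F$; deleting the small boundary collars near $\beta$ from $W$ and $W'$ yields caps $c_W$ and $c_{W'}$ on $F$, so Lemma~\ref{lem:W-move-equals-surgery-on-tube-cap} gives $f_W\simeq F_{c_W}$ and $f_{W'}\simeq F_{c_{W'}}$. Next I will construct a cap $c_G$ on $F$ by tubing a meridional disk to the tube of $F$ into $G$; by construction $\partial c_G$ is a meridian of the tube that meets the longitudinal portion of $\partial c_W$ transversely in a single point and is disjoint from the $\alpha$-portion, making $c_G$ dual to $c_W$.

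The main step, and the main obstacle, is to check that this same $c_G$ is also dual to $c_{W'}$. For this I will arrange $B_\beta$ so that $\partial B_\beta$ lies in the tube region of $F$ as a second meridian disjoint from $\partial c_G$, and I will choose the guiding arc $\gamma$ to avoid $\partial c_G$. Then $\partial c_{W'}$ differs from $\partial c_W$ on $F$ only by a band sum (along $\gamma$) with a meridian disjoint from $\partial c_G$, so the geometric intersection $|\partial c_{W'}\cap\partial c_G|$ remains equal to one. (If $\partial B_\beta$ is instead pushed outside the tube region, it is null-homotopic on $F$, so $\partial c_{W'}$ is isotopic on $F$ to $\partial c_W$ rel endpoints, and duality with $c_G$ is again automatic.) Lemma~\ref{lem:capped-surface-isotopy} applied to the dual pairs $(c_W,c_G)$ and $(c_{W'},c_G)$ then yields $F_{c_W}\simeq F_{c_G}\simeq F_{c_{W'}}$, finishing the proof.
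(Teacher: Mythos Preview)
Your argument has a genuine gap at the ``main step'': the cap $c_G$ you build cannot be made dual to $c_{W'}$.

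First, the option of arranging $\partial B_\beta$ as ``a second meridian'' on the tube is not available. The Whitney bubble $B_\beta$ has its boundary in the $\beta$-sheet of $f$ (it is the boundary of a regular neighborhood $\nu(\beta)$ of $\beta$ in that sheet), whereas the tube of $F$ is attached to the $\alpha$-sheet; so $\partial B_\beta$ never lies on the tube. Your alternative observation that $\partial B_\beta$ is null-homotopic on $F$ is correct (it bounds $\nu(\beta)\subset F$), and this does give $|\partial c_G\cap\partial c_{W'}|=1$ since the meridian $\partial c_G$ only meets the longitude portion of $\partial c_{W'}$. But duality also requires \emph{interior} disjointness of $c_G$ and $c_{W'}$, and this fails.

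The interior of $c_G$ contains the normal tube running along the guiding arc from $r\in\beta$ to $z$ in $F$. In the domain $S^2\smallsetminus (D_{x^+}\cup D_{x^-})$ of $F$, the preimage $a'$ of $\alpha'$ consists of most of $a$, two parallel strands along $\gamma$, and most of the loop $\partial\nu(b)$ encircling $b$. Together with the short remaining piece of $a$ between the $\gamma$-strands, these curves bound a disk containing $b$ (and hence the preimage $r_0$ of $r$) and not containing $z_0$; the tube of $F$ does not help since its attaching circles $\partial D_{x^\pm}$ also lie outside this disk. Thus every guiding arc from $r$ to $z$ crosses $\alpha$ or $\alpha'$. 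If it crosses $\alpha'$, then near that crossing the normal-circle tube of $c_G$ meets the collar of $W'$ coming off $F$ along $\alpha'$, so $\mathrm{int}\,c_G\cap\mathrm{int}\,c_{W'}\neq\emptyset$ and Lemma~\ref{lem:capped-surface-isotopy} does not apply. If instead it crosses $\alpha$, then $c_G$ fails to be dual to $c_W$. Either way you cannot get a single $c_G$ dual to both caps.

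This is precisely the obstruction that appears in the proof of Proposition~\ref{prop:common-arcs-w-moves}, and there it is removed by performing Whitney-disk slides---including self-slides, i.e.\ by invoking Lemma~\ref{lem:w-disk-self-slide} itself. So your approach is circular. The paper's proof avoids this by an entirely different route: it never forms $F$ or $c_G$, but instead produces a new clean collection with the \emph{same boundary} as $\cW'$ while preserving the isotopy class of $f_\cW$, using the ``slide across $D_z$'' move of Lemma~\ref{lem:slide-w-disk-across-D-z}, slides over the other $W_j$ (Lemma~\ref{lem:w-disk-slide-isotopy}), and a global isotopy of $\alpha_i$ around the $2$--sphere, then finishes with Lemma~\ref{lem:choice-of-disks}.
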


\begin{proof}
Let $\alpha_i$ be the Whitney arc of $\partial W_i=\alpha_i\cup\beta_i$ that is slid over $\beta_i$ to become $\alpha'_i\subset \partial W'_i=\alpha'_i\cup\beta_i$.
Referring to Figure~\ref{fig:self-w-slide-dashed-arcs}, consider the following five steps (indicated by the arrows in the figure) describing in the domain an isotopy of $\alpha_i=f(a_i)$ to $\alpha'_i=f(a'_i)$: 

Step~1 and Step~2 isotope $\alpha_i$ towards and then across $D_z=f(D_{z_0})$, as in Lemma~\ref{lem:slide-w-disk-across-D-z}. After these first two steps of the isotopy the union of the resulting new arc $\alpha^2_i$ with the original $\beta_i$ admits a clean Whitney disk $W^2_i$, and replacing $W_i$ by $W^2_i$ in $\cW$ yields a clean collection $\cW^2$ such that $f_{\cW^2}$ is isotopic to $f_\cW$ by Lemma~\ref{lem:slide-w-disk-across-D-z}. 

Step~3 then uses the Whitney disk sliding operation of Section~\ref{sec:w-disk-slides} to 
push
$\alpha^2_i$ across all the $\alpha_j$ and $\beta_j$ Whitney arcs of the Whitney disks $W_j$ for $j\neq i$ by sliding $W^2_i$ twice over each of these Whitney disks (once each for $\alpha_j$ and $\beta_j$). Taking the resulting Whitney disk $W_i^3$ as a replacement for $W^2_i$ in $\cW^2$ yields $\cW^3$, with $f_{\cW^3}$ isotopic to $f_{\cW^2}$ by Lemma~\ref{lem:w-disk-slide-isotopy}.

Finally, Steps~4 and 5 isotope a collar of $W_i^3$ around the $2$-sphere until the Whitney disk boundary arc ends up as the band sum $\alpha'_i$ of the original $\alpha_i$ with the boundary of a Whitney bubble over $\beta_i$. This 5-step construction yields $\cW^5$ with $W^5_i\in\cW^5$ having boundary $\alpha'_i\cup\beta_i$ and $f_\cW$ isotopic to $f_{\cW^5}$.
Now form $\cW'$ from $\cW^5$ by replacing the Whitney disk $W^5_i$ resulting from this construction with the Whitney disk $W'_i$ gotten by sliding $\alpha_i$ across $\beta_i$ which has the same boundary.
By Lemma~\ref{lem:choice-of-disks} we get that $f_\cW$ is isotopic to $f_{\cW'}$.  
\end{proof}
 
We come to our most useful geometric result for $f:S^2\imra M$ with geometric dual $G$:
\begin{prop}\label{prop:common-arcs-w-moves}
If $\cW$ and $\cW'$ are clean collections of Whitney disks on $f$ such that for each $i$, $W_i\in\cW$ and $W_i'\in\cW'$ share at least one common Whitney arc $\beta_i = \beta_i'$, then $f_\cW$ is isotopic to~$f_{\cW'}$.
\end{prop}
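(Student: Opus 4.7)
The plan is to realize both $f_\cW$ and $f_{\cW'}$ as surgeries on a common capped surface and then produce a single dual cap collection coming from $G$, so that Lemma~\ref{lem:capped-surface-isotopy} forces the two surgery results to be isotopic. Since the shared Whitney arcs $\beta_i=\beta_i'$ are pairwise disjoint on $f$, I would first form an embedded genus-$n$ surface $F$ by tubing $f$ to itself along all of the $\beta_i$ simultaneously. Deleting a small boundary collar of each $\beta_i$ from $W_i$ and from $W_i'$ yields caps $c_{W_i}$ and $c_{W_i'}$ on $F$. The clean-collection hypothesis ensures that $\cC:=\{c_{W_i}\}$ and $\cC':=\{c_{W_i'}\}$ are each disjointly embedded cap collections on $F$, and iterating Lemma~\ref{lem:W-move-equals-surgery-on-tube-cap} gives isotopies $f_\cW \simeq F_\cC$ and $f_{\cW'} \simeq F_{\cC'}$.

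Next I would build a common dual cap collection $\cC_G=\{c_{G_i}\}$ using $G$. For each $i$, pick a small meridional disk $d_i$ to $F$ meeting $F$ in a single point $r_i$ in the interior of $\beta_i$, with $\partial d_i$ a meridional circle around the $i$-th tube. Because $d_i$ lies in a normal slice to $\beta_i$, it is automatically disjoint from every Whitney disk in $\cW$ and $\cW'$. Then eliminate $r_i$ by the Norman trick, tubing $d_i$ into a parallel copy of $G$ along an embedded arc in $F$ running from $r_i$ to $D_z$. Choosing the guiding arcs to miss $\partial\cC\cup\partial\cC'$, and using sufficiently thin tubes of varying radii as in Section~\ref{sec:tubing-into-G}, produces a disjoint collection of caps $c_{G_i}$ whose interiors are disjoint from $\cC$ and $\cC'$.

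By construction $\partial c_{G_i}$ is a meridional circle of the $i$-th handle of $F$, so it meets $\partial c_{W_i}$ and $\partial c_{W_i'}$ transversely in a single point each (where they cross the $i$-th tube, which coincides precisely because $\beta_i=\beta_i'$) and is disjoint from $\partial c_{W_j}$ and $\partial c_{W_j'}$ for $j\neq i$. Therefore $\cC_G$ is dual simultaneously to $\cC$ and to $\cC'$, and two applications of Lemma~\ref{lem:capped-surface-isotopy} chain together to give $F_\cC \simeq F_{\cC_G} \simeq F_{\cC'}$. Combined with the first step this yields $f_\cW$ isotopic to $f_{\cW'}$, as required.

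The main obstacle is the routing in the second step: simultaneously positioning $n$ disjoint guiding arcs in $F$, one from each $r_i$ to $D_z$, while avoiding the $2n$ cap boundaries in $\partial\cC\cup\partial\cC'$. The shared arcs $\beta_i=\beta_i'$ are exactly what allows $\partial c_{W_i}$ and $\partial c_{W_i'}$ to share their unique crossing of the $i$-th tube, so a single meridional circle can serve as a geometric dual to both cap collections at once; away from the tubes one then exploits the connectedness of the complement of each single Whitney collection on $f$ together with the freedom to slide arcs across $D_z$ (compare Lemma~\ref{lem:slide-w-disk-across-D-z}) to arrange all arcs coherently in the complement of $\partial\cC\cup\partial\cC'$. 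Once that routing is in place the remaining argument is a direct double invocation of Lemma~\ref{lem:capped-surface-isotopy}.
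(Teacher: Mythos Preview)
Your overall strategy matches the paper's exactly: form the capped surface $F$ by tubing along the shared $\beta_i$'s, recognize $f_\cW$ and $f_{\cW'}$ as $F_\cC$ and $F_{\cC'}$, and then exhibit a single $G$-cap collection $\cC_G$ dual to both so that Lemma~\ref{lem:capped-surface-isotopy} finishes. The gap is precisely in the step you flag as ``the main obstacle,'' and your proposed resolution does not close it.

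The issue is that the complement $F\smallsetminus(\partial\cC\cup\partial\cC')$ need not be connected: the arcs $\alpha_i'$ may well separate the points $r_i\in\beta_i$ from $D_z$. Connectedness of the complement of each \emph{single} Whitney collection is not enough, and sliding the guiding arcs across $D_z$ (as in Lemma~\ref{lem:slide-w-disk-across-D-z}) does not help you get \emph{to} $D_z$ in the first place. What the paper actually does is modify $\cW'$ before building $\cC_G$: it uses the Whitney-disk sliding operations of Section~\ref{sec:w-disk-slides} (Lemma~\ref{lem:w-disk-slide-isotopy}) and crucially the self-slide Lemma~\ref{lem:w-disk-self-slide} to push the $\alpha_i'$-arcs out of the way of a chosen family of arcs $\gamma_i$ from $\beta_i$ to $z$ lying in $S^2\smallsetminus\partial\cW$. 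These slides change $\partial\cW'$ while preserving the isotopy class of $f_{\cW'}$, and after them the required routing exists. Your proposal never invokes either sliding lemma, so the existence of the guiding arcs is simply asserted rather than established.
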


\begin{proof}
We first prove the simplest case of the statement: If $W$ and $W'$ are Whitney disks on $f$ which share a common Whitney arc $\beta=\beta'$, then $f_W$ is isotopic to $f_{W'}$.

The proof will proceed as in the setting of Lemma~\ref{lem:W-move-equals-surgery-on-tube-cap}, but because here we have two Whitney disks with possibly $\alpha\neq\alpha'$ we may need to apply the sliding maneuver of Section~\ref{sec:w-disk-slides} to create a tube-guiding arc to $z$ for cleaning up the meridional cap.

Let $F$ be the surface resulting from tubing $f$ to itself along the common Whitney arc $\beta=\beta'$ of $\partial W$ and $\partial W'$.  
Deleting small boundary collars of $W$ and $W'$ near $\beta$
yields caps $c_W$ and $c_{W'}$ for $F$ as in Figure~\ref{fig:tube-caps-1}, but with $\partial c_{W'}$ wandering off into the ``horizontal'' part of $F$ corresponding to $\alpha'\neq\alpha$.
By Lemma~\ref{lem:W-move-equals-surgery-on-tube-cap}, $F_{c_W}$ is isotopic to $f_W$, and $F_{c_{W'}}$ is isotopic to $f_{W'}$.

As in the setting of Lemma~\ref{lem:W-move-equals-surgery-on-tube-cap}, we want to construct a cap $c_G$ for $F$ such that $c_G$ is dual to both $c_W$ and $c_{W'}$. Then by Lemma~\ref{lem:W-move-equals-surgery-on-tube-cap} it will follow that each of $f_W$ and $f_{W'}$ is isotopic to $F_{c_G}$.

The construction of $c_G$ starts as in Figure~\ref{fig:tube-caps-2}: We want to clean up a meridional disk $d$ to $F$ which has a single transverse intersection $r=d\pitchfork F\in\beta$ and $\partial d\subset F$ by tubing $d$ into $G$.  
But now we have to find an embedded path from $r$ to $z=G\cap F$ that is disjoint from \emph{both} $\partial c_W$ and $\partial c_{W'}$.

If $r$ and $z$ lie in the same connected component of $F\smallsetminus(\partial c_W\cup \partial c_{W'})$ then there is no problem. We can eliminate $r$ by tubing $d$ into $G$ along an embedded path in $F$ running from $r$ to a point near $z$ where a parallel copy of $G$ intersects $F$, and the resulting cap $c_G$ for $F$ is dual to both $c_W$ and $c_{W'}$.  
\begin{figure}[ht!]
        \centerline{\includegraphics[scale=.19]{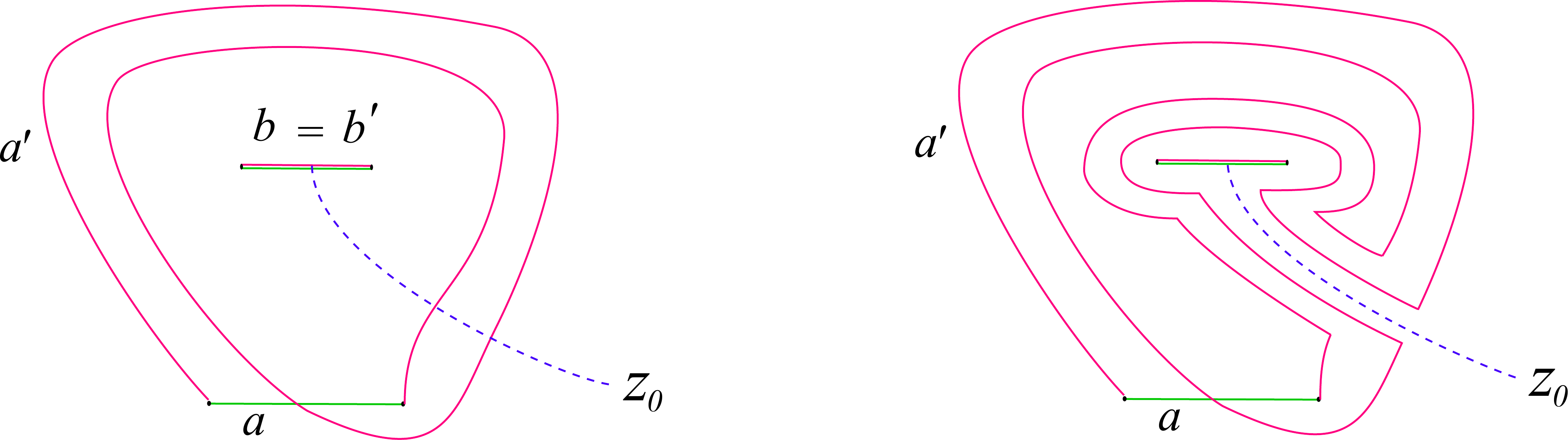}}
        \caption{In the domain $S^2$: The case of one pair of Whitney disks, with $f(z_0)=z=f\cap G$. Slides are done in the order starting closest to $f(b)=\beta=\beta'=f(b')$. Here $a$ and $a'$ are the preimages of $\alpha$ and $\alpha'$, respectively.}
        \label{fig:alpha-slides}
\end{figure}

Now consider the case that $r$ and $z=G\cap F$ do not lie in the same connected component of $F\smallsetminus(\partial c_W\cup \partial c_{W'})$, and observe that
this means that $\beta$ and $z$ do not lie in the same component of the complement in $f$ of the immersed loop $\alpha\cup \alpha'$ (see the left side of Figure~\ref{fig:alpha-slides} for the preimage).
In this case we can modify the original Whitney disk $W'$ before constructing $F$ using the sliding maneuver of Section~\ref{sec:w-disk-slides} to arrange that $\beta$ and $z$ do lie in the same component of $f\smallsetminus(\alpha\cup \alpha')$: 

Since $f \smallsetminus \partial W$ is connected, there is an embedded arc $\gamma$ from $z$ to $r\in\beta'=\beta$ such that $\gamma$ is disjoint from $\alpha$ (the preimage of $\gamma$ is the dashed blue arc in Figure~\ref{fig:alpha-slides}). Eliminate the intersections between $\gamma$ and $\alpha'$ by sliding $W'$ over itself from $\alpha'$ to $\beta'$ guided by $\gamma$ as in Section~\ref{sec:w-disk-slides} (right side of Figure~\ref{fig:alpha-slides}). By Lemma~\ref{lem:w-disk-self-slide} this does not change the isotopy class of $f_{W'}$, and now the construction of the cap $c_G$ for $F$ goes through as desired. 

\begin{figure}[ht!]
        \centerline{\includegraphics[scale=.2]{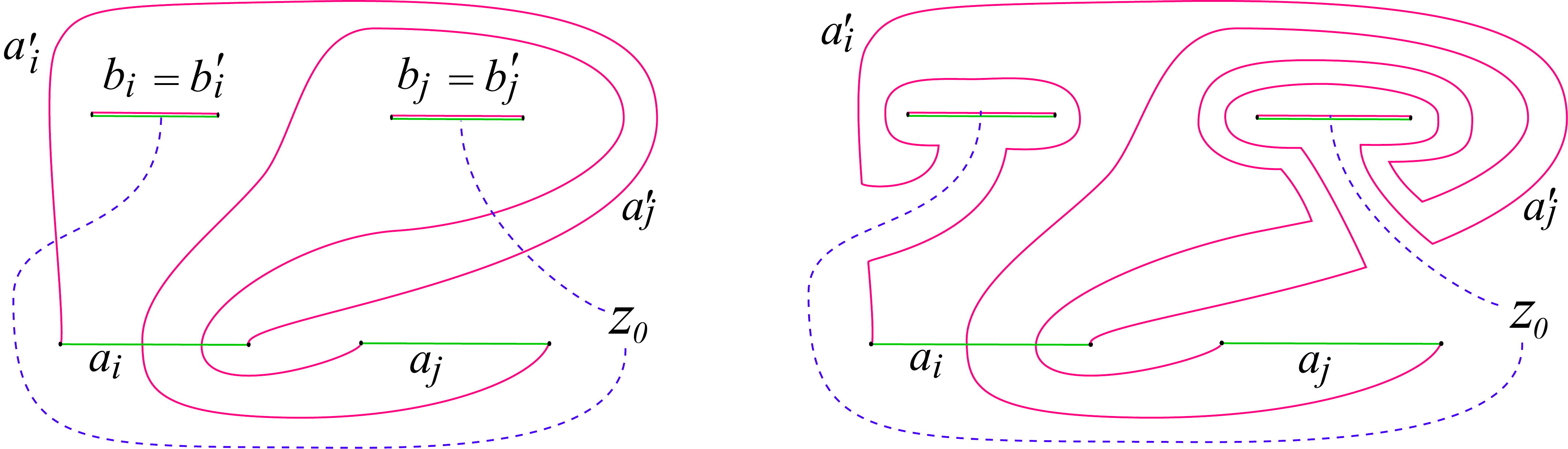}}
        \caption{In the domain $S^2$: The general multiple Whitney disk case of Figure~\ref{fig:alpha-slides}. Again the arcs $\gamma_i$ (dashed blue preimages) lead to our slides of the $\alpha'_i$ (preimages $a'_i$) algorithmically, starting closest to the $\beta_i$-arcs (preimages $b_i=b_i'$).}
        \label{fig:multiple-alpha-slides}
\end{figure}
For the general statement, apply the same construction to each of the pairs of Whitney disks $W_i$ and $W_i'$ in $\cW$ and $\cW'$.
Start with disjointly embedded  arcs $\gamma_i$ in $f \smallsetminus \partial \cW$ from the common arcs $\beta_i$ to $z$. The only new complication is that making these arcs disjoint from $\partial\cW'$
may involve more Whitney disk slides as shown in Figure~\ref{fig:multiple-alpha-slides}. By Lemma~\ref{lem:w-disk-slide-isotopy} and Lemma~\ref{lem:w-disk-self-slide} these sides preserve the isotopy class of $f_{\cW'}$, and by applying Lemma~\ref{lem:W-move-equals-surgery-on-tube-cap} to each pair $W_i, W'_i$ we have that $f_\cW$ is isotopic to $f_{\cW'}$.
\end{proof}

 \section{New Proof of Gabai's LBT}\label{sec:proof-of-theorem}
Let $M$ be a smooth orientable $4$--manifold and  $f:S^2\imra M$ a generic smooth map with 
$0=\mu(f)\in\Z[\pi_1M]/\langle g-g^{-1} \rangle$
and with geometric dual $G$. 
Recall that $ \cR^G_{[f]}$ denotes the set of isotopy classes of embedded spheres which are homotopic to $f$ and have $G$ as a geometric dual.

\textbf{Outline of our proof of Gabai's LBT:} We will show that $ \cR^G_{[f]}$ contains a unique element if $\pi_1M$ does not contain 2-torsion. As explained in Section~\ref{subsec:regular-homotopy}, any two embedded spheres in $\cR^G_{[f]}$ are related via a finite sequence of isotopies, finger moves and Whitney moves, all away from $G$. By general position it can be arranged that the finger moves occur before the Whitney moves (see eg.~\cite[Lem.8]{BT}). Denoting the result of the finger moves by $f$, we will consider all possible collections of Whitney disks on $f$ in $M\smallsetminus G$ and show that all the resulting embeddings are isotopic.
As a first step, Section~\ref{sec:choices} describes precisely the various types of choices involved in constructing a collection $\cW$ of clean Whitney disks on $f$ such that the result $f_\cW$ of doing the Whitney moves in $\cW$ on $f$ is an embedding. 
In Sections~\ref{subsec:existence-and-choices-of-disks}--\ref{sec:choice-of-sheets-trivial-element} 
we prove that the isotopy class of $f_\cW$ does not depend on any of these choices.

\subsection{Choices of sheets, pairings, W-arcs and W-disks}\label{sec:choices}
We'll discuss the four types of choices $\textsf{C}_{\textrm{sheets}}$, $\textsf{C}_{\textrm{pairings}}$, $\textsf{C}_{\textrm{W-arcs}}$ and 
$\textsf{C}_{\textrm{W-disks}}$ that determine a clean collection $\cW$ of Whitney disks on $f:S^2\imra M$ and hence a generic homotopy from $f$ to an embedding $f_\cW$ (with geometric dual $G$). In the following, each step will depend on having made all previous choices. Moreover, each later choice lets us reconstruct the previous choices. 

Denote the set of transverse self-intersections of $f$ by $\{p_1,\dots,p_{2n}\}\subset f(S^2)$, where the ordering of the $p_i$ is an artifact of the notation and will never be used; and fix a whisker for $f$ from the basepoint of $M$. 
(The condition $0=\mu(f)\in\Z[\pi_1M]/\langle g-g^{-1} \rangle$ implies that $f$ has an even number of self-intersections.)

\begin{enumerate}
\item[$\textsf{C}_{\textrm{sheets}}$:] \label{item:choice-of-sheets}
A \emph{choice of sheets} $\{x_1,\dots, x_{2n}\}\in\textsf{C}_{\textrm{sheets}}$ consists of choices $x_i\in f^{-1}(p_i)\subset S^2$, subject to the following requirement:
By Section~\ref{sec:clean-w-disks-moves}, each $x_i$ orients a double point loop at $p_i$ by the convention that the loop is the image of a path {\it starting from} $x_i$. Via the whisker for $f$ we get a well-defined group element $g(x_i)\in\pi_1M$.

Then our choice of sheets is required to satisfy
\begin{equation}\tag{$*$}
0=\sum_{i=1}^{2n}\,\epsilon_i\cdot g(x_i)\in\Z[\pi_1M],   \text{ where $\epsilon_i\in\{\pm 1\}$ is the sign of $p_i$.}
\end{equation}
A different choice of whisker for $f$ would change each $g(x_i)$ to a conjugate $g(x_i)^h$ for some fixed $h\in \pi_1M$, hence our requirement $(*)$ is independent of the whisker. Moreover, switching the preimage choice $x_i$ at $p_i$ has the effect of inverting the group element $g(x_i)$, 
so choices of sheets exist since $0=\mu(f)\in\Z[\pi_1M]/\langle g-g^{-1} \rangle $.

\item[$\textsf{C}_{\textrm{pairings}}$:] \label{item:choice-of-pairings}
For $\{x_1,\dots, x_{2n}\}\in\textsf{C}_{\textrm{sheets}}$, a \emph{compatible choice of pairings} $\{x_1^\pm,\dots, x_n^\pm\}\in\textsf{C}_{\textrm{pairings}}$ consists of $n$ distinct pairs $x_i^\pm:=(x^+_i,x^-_i)=(x_{j_i}, x_{k_i})$  with $\epsilon_{j_i}=+1=-\epsilon_{k_i}$ and $g(x_{j_i})=g(x_{k_i})$.
A choice of pairings exists by our requirement $(*)$ on $\{x_1,\dots, x_{2n}\}$ and it induces pairings $(p_i^+, p_i^-)$ of the self-intersections of $f$.

\item[$\textsf{C}_{\textrm{W-arcs}}$:] \label{item:choice-of-arcs}
For $\{x_1^\pm,\dots, x_n^\pm\}\in\textsf{C}_{\textrm{pairings}}$, a \emph{compatible choice of Whitney arcs} 
$\{\alpha_1,\beta_1, \dots, \alpha_n, \beta_n\}\in\textsf{C}_{\textrm{W-arcs}}$ are the images under $f$ of disjointly embedded arcs $a_i \subset S^2$ joining $x^+_i$ and $x^-_i$, and arcs $b_i \subset S^2$  joining $y^+_i$ and $y^-_i$ for $i=1,\dots, n$, where $f^{-1}(p^\pm_k) = \{x^\pm_k, y^\pm_k\}$. 
Here $\alpha_i:= f(a_i)$ and $\beta_i:=f(b_i)$ are disjoint, except that $\partial \alpha_i = \{p_i^+, p_i^-\} = \partial \beta_i$. Note that $\alpha_i\subset f(S^2)$ determines $a_i\subset S^2$ and hence the original choice of pairings is determined by $\{\alpha_1,\dots,\alpha_n\}$ alone. A choice of Whitney arcs always exists since the complement in $S^2$ of finitely many disjointly embedded arcs and points is connected.

\item[$\textsf{C}_{\textrm{W-disks}}$:] \label{item:choice-of-disks}
Given a choice of Whitney arcs $\{\alpha_1,\beta_1, \dots, \alpha_n, \beta_n\}\in\textsf{C}_{\textrm{W-arcs}}$, a \emph{compatible choice of Whitney disks} $\{W_1,\dots, W_n\}\in \textsf{C}_{\textrm{W-disks}}$ is a clean collection of Whitney disks $W_i$ whose boundaries are equal to the circles $\alpha_i\cup \beta_i\subset M$. Recall that \emph{clean} means the $W_i$ are framed, disjointly embedded, have interiors disjoint from $f$, and are disjoint from $G$.
The existence of a choice of Whitney disks for any choice of Whitney arcs follows from Lemma~\ref{lem:choice-of-disks-exists}. To reconstruct $\alpha_i$ from $W_i:D^2\hra M$, we also require that $\alpha_i = W_i(S^1_-)$, where $S^1_-\subset S^1=\partial D^2\subset D^2\subset \R^2$ is the lower semi-circle. 
\end{enumerate}
In the following, we will  abbreviate our choices by 
\[
\sx:=\{x_1,\dots, x_{2n}\}, \,\,\sx^\pm:=\{x^\pm_1,\dots, x^\pm_{n}\},\,\,  \cA:=\{\alpha_1,\beta_1, \dots, \alpha_n, \beta_n\}  \text{ and }  \cW:=\{W_1,\dots, W_{n}\}.
\]
The meaning of $\partial \cW = \cA$ should be clear from our conventions.
The embedded sphere obtained from $f$ by doing Whitney moves guided by the Whitney disks in $\cW$ is denoted $f_\cW$.

\subsection{Existence and choices of Whitney disks}\label{subsec:existence-and-choices-of-disks}
For future reference we observe here that the existence of a compatible $\cW\in\textsf{C}_{\textrm{W-disks}}$ for any given $\cA\in\textsf{C}_{\textrm{W-arcs}}$ guaranteed by Lemma~\ref{lem:choice-of-disks-exists}, together with the definitions of pairing choices and sheet choices in Section~\ref{sec:choices}, imply the following:
\begin{lem}\label{lem:disks-exist-for-all-choices}
Given  $\sx^\pm\in\textsf{C}_{\textrm{pairings}}$, there exists $\cW\in\textsf{C}_{\textrm{W-disks}}$ compatible with $\sx^\pm$.

Given $\sx\in\textsf{C}_{\textrm{sheets}}$, there exists $\cW\in\textsf{C}_{\textrm{W-disks}}$ compatible with $\sx$.
$\hfill\square$
\end{lem}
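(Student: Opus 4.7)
The plan is to prove both statements by reducing to Lemma~\ref{lem:choice-of-disks-exists}, which produces a clean collection of Whitney disks for any disjointly embedded collection of null-homotopic Whitney circles.

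For the first statement, suppose $\sx^\pm=\{x_1^\pm,\dots,x_n^\pm\}\in\textsf{C}_{\textrm{pairings}}$ is given, so in particular $g(x_i^+)=g(x_i^-)$ for each $i$. Write $f^{-1}(p_i^\pm)=\{x_i^\pm,y_i^\pm\}$. The first step is to construct a compatible choice of Whitney arcs $\cA\in\textsf{C}_{\textrm{W-arcs}}$. Since the $2n$ preimages $\{x_i^\pm,y_i^\pm\}$ form a finite subset of $S^2$, we can disjointly embed arcs $a_i$ from $x_i^+$ to $x_i^-$ and arcs $b_i$ from $y_i^+$ to $y_i^-$ in $S^2$; general position in a surface allows any such pairing of finitely many points to be realized disjointly. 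Setting $\alpha_i:=f(a_i)$ and $\beta_i:=f(b_i)$ produces a candidate collection $\cA$ of Whitney arcs.

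The second step is to verify that each Whitney circle $\alpha_i\cup\beta_i$ is null-homotopic in $M$, which is the hypothesis needed by Lemma~\ref{lem:choice-of-disks-exists}. Traversing $\alpha_i$ from $p_i^+$ to $p_i^-$ and then $\beta_i$ back from $p_i^-$ to $p_i^+$ gives a loop in $f(S^2)$ that changes sheets at both $p_i^\pm$. Splitting this loop by a short arc through the double points expresses it (up to homotopy and choice of basepoint along the whisker) as the concatenation of a double point loop at $p_i^+$ starting from $x_i^+$ with the inverse of a double point loop at $p_i^-$ starting from $x_i^-$. By definition of $g(x_i^\pm)$ this loop represents $g(x_i^+)\cdot g(x_i^-)^{-1}=1\in\pi_1 M$, so the Whitney circle is null-homotopic. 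Applying Lemma~\ref{lem:choice-of-disks-exists} to the disjoint embedded null-homotopic circles $\{\alpha_i\cup\beta_i\}$ yields a clean collection $\cW\in\textsf{C}_{\textrm{W-disks}}$ with $\partial\cW=\cA$, and by construction $\cW$ is compatible with $\sx^\pm$.

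For the second statement, given $\sx=\{x_1,\dots,x_{2n}\}\in\textsf{C}_{\textrm{sheets}}$, it suffices to produce a compatible $\sx^\pm\in\textsf{C}_{\textrm{pairings}}$ and invoke the first part. The relation $(*)$ rewrites as $\sum_{i:\epsilon_i=+1} g(x_i)=\sum_{j:\epsilon_j=-1} g(x_j)$ in $\Z[\pi_1 M]$, so for every $g\in\pi_1 M$ the number of positive $x_i$ with $g(x_i)=g$ equals the number of negative $x_j$ with $g(x_j)=g$. Pairing up positive and negative indices sharing a common group element (for instance by choosing an arbitrary bijection within each group-element class) yields the desired pairs $x_i^\pm=(x_{j_i},x_{k_i})$ with $\epsilon_{j_i}=+1=-\epsilon_{k_i}$ and $g(x_{j_i})=g(x_{k_i})$, so $\sx^\pm\in\textsf{C}_{\textrm{pairings}}$ is compatible with $\sx$. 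The first statement then supplies a compatible $\cW\in\textsf{C}_{\textrm{W-disks}}$. The only nontrivial content is the null-homotopy of the Whitney circles in the first step; everything else is bookkeeping about finite collections of points in $S^2$ and coefficients in $\Z[\pi_1 M]$.
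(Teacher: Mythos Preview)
Your proof is correct and follows essentially the same approach as the paper, which treats this lemma as an immediate observation: the sentence preceding the lemma points out that Lemma~\ref{lem:choice-of-disks-exists} together with the definitions in Section~\ref{sec:choices} suffice. You have simply unpacked those definitions explicitly---the existence of disjoint arcs in $S^2$, the null-homotopy of Whitney circles from the pairing condition $g(x_i^+)=g(x_i^-)$ (already noted in Section~\ref{sec:clean-w-disks-moves}), and the existence of pairings from condition~$(*)$---before invoking Lemma~\ref{lem:choice-of-disks-exists}.
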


From Lemma~\ref{lem:choice-of-disks}, the isotopy class of $f_\cW$ is independent of the interiors of the Whitney disks in $\cW$, i.e.\ $f_\cW$ only depends on $\cA$.

We next introduce Norman spheres, which will play a key role in showing that the isotopy class of $f_\cW$ is also independent of choices of arcs and pairings for any given sheet choice.

\subsection{Norman spheres}\label{sec:norman}

Fix a choice of sheets $\sx=\{x_1,x_2,\ldots,x_{2n}\}\in\textsf{C}_{\textrm{sheets}}$ for $f$. We need yet another type of choice to define a Norman sphere (whose isotopy class will ultimately only depend on $\sx$). Recall that $D_z\subset f$ denotes a small disk around $z=f\cap G$ such that each point in $D_z$ intersects a parallel of $G$ which is geometrically dual to $f$.

\begin{enumerate}
\item[$\textsf{C}_{\textrm{N-arcs}}$:] \label{item:choice-of-N-arcs}
A {\it compatible choice of Norman arcs} $\cZ:=\{\sigma_1,\dots,\sigma_{2n}\} \in \textsf{C}_{\textrm{N-arcs}}$ for $\sx\in\textsf{C}_{\textrm{sheets}}$ is the image under $f$ of disjointly embedded arcs $s_i\subset S^2$ starting at $x_i$ and ending in $f^{-1}(\partial D_z)$. Then $\sigma_i:=f(s_i)\subset f(S^2)$ are disjointly embedded arcs starting at $p_i$ and ending in $\partial D_z$; they determine the arcs $s_i$ uniquely.
\end{enumerate}

\begin{defn} \label{def:Norman}
The \emph{Norman sphere} $f^G_\cZ :S^2\hra M$ is obtained from $f$, $G$ and $\cZ$ by
eliminating all the self-intersections $p_i\in f\pitchfork f$ by tubing $f$ into parallel copies of $G$ along the $\sigma_i$. 
Precisely, these tubing operations replace the image of a small disk around each $y_i\in S^2$ by a normal tube along $\sigma_i$ together with a parallel copy $G_i$ of $G$ with a small normal disk to $f$ removed at $G_i\cap f$. 
Here $f^{-1}(p_i)=\{x_i,y_i\}$ with $x_i\in\sx$, and the $y_i$-sheet of $f$ at $p_i$ is deleted by the tubing operation since the $y_i$-sheet is normal to $\sigma_i$ at $p_i$.
\end{defn}

By construction, the Norman sphere $f^G_\cZ$ is embedded and has $G$ as a geometric dual.
Also, $f^G_\cZ$ is homotopic to $f$ since the copies of $G$ in the connected sum with $f$ come in oppositely oriented pairs having the same group element by our requirement $(*)$ in Section~\ref{sec:choices} on the sheet choice $\sx$.
Hence $f^G_\cZ\in\cR^G_{[f]}$.

Surprisingly, we will show in Lemma~\ref{lem:Norman-independence-of-arcs} that the isotopy class of $f^G_\cZ $ only depends on $\sx$ and not at all on $\cZ$.

We remark that the $\sigma_i$ are as in \cite{Gab} which are the simplest of the three types of arcs used by Gabai. The $\sigma_i$ in \cite{Gab} are allowed to intersect but here we require them to be disjointly embedded. 

\begin{lem}\label{lem:Norman-for-every-Whitney}
For any given choice of sheets $\sx$, if $\cW$ is an $\sx$-compatible choice of Whitney disks then there is an $\sx$-compatible choice of Norman arcs $\cZ$ 
such that $f^G_\cZ $ is isotopic to $f_\cW$. 
\end{lem}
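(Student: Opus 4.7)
The approach is to express $f_\cW$ as the result of surgery on an immersed capped surface built from $f$ and $\cW$ via the ``tube-and-cap'' description of a Whitney move (Lemma~\ref{lem:W-move-equals-surgery-on-tube-cap}), then to trade the Whitney caps for dual caps constructed from parallel copies of $G$ using Lemma~\ref{lem:capped-surface-isotopy}, and finally to recognize the resulting sphere as a Norman sphere $f^G_\cZ$ for a suitably chosen collection of Norman arcs $\cZ$.

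First, tube $f$ to itself simultaneously along all the Whitney arcs $\beta_i \in \cA = \partial \cW$ to produce an immersed genus-$n$ surface $F$ in $M$; this is well-defined because the $\beta_i$ are pairwise disjoint, and since the $W_i$ are disjointly embedded the corresponding tubes can be placed in disjoint regular neighborhoods of the $\beta_i$. Truncating each clean Whitney disk $W_i$ by deleting a small collar near $\beta_i$ yields a disjoint collection of caps $\cC_\cW := \{c_{W_i}\}$ on $F$, and the iterated form of the observation preceding Lemma~\ref{lem:W-move-equals-surgery-on-tube-cap} identifies $F_{\cC_\cW}$ with $f_\cW$ up to isotopy. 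Next, for each $i$ construct a cap $c_G^{(i)}$ dual to $c_{W_i}$ by taking a meridional disk to the $\beta_i$-tube at a point $r_i \in \beta_i$ and tubing it into a parallel copy of $G$ along an embedded arc $\gamma_i \subset F$ from $r_i$ to a point near $\partial D_z$, exactly as in the proof of Lemma~\ref{lem:W-move-equals-surgery-on-tube-cap}. The $\gamma_i$ can be chosen pairwise disjoint and disjoint from each $c_{W_j}$ because the $\partial c_{W_i}$ are disjoint non-separating meridians of the tubes in $F$. Then $\cC_G := \{c_G^{(i)}\}$ and $\cC_\cW$ are dual collections of caps, so Lemma~\ref{lem:capped-surface-isotopy} gives that $F_{\cC_G}$ is isotopic to $F_{\cC_\cW}$ and hence to $f_\cW$.

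It remains to recognize $F_{\cC_G}$ as a Norman sphere. The surgery on $\cC_G$ cuts each $\beta_i$-tube along a meridian near $r_i$ and glues in two parallel copies of $c_G^{(i)}$; locally near each $p_i^\pm$ this removes the small disk of $f$ around the preimage $y_{j_i}$ (respectively $y_{k_i}$) in $S^2$ and caps off by an embedded 2-annulus running to a parallel copy of $G$, which is exactly the Norman-trick modification at $p_i^\pm$ with the $x_{j_i}$- and $x_{k_i}$-sheets retained. Accordingly, choose Norman arcs $\sigma_{j_i}, \sigma_{k_i}$ to be embedded, pairwise disjoint arcs in $f(S^2)$ whose preimages start at $x_{j_i}, x_{k_i}$ and end in $f^{-1}(\partial D_z)$, and each of which is isotopic rel endpoints to the path on $f$ obtained by following the half of $\beta_i$ from $p_i^\pm$ to $r_i$ and then projecting $\gamma_i$ from $F$ back to $f$. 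The resulting $\cZ$ is $\sx$-compatible by construction, and $f^G_\cZ$ is isotopic to $F_{\cC_G}$ because both spheres are obtained from $f$ by removing the same pair of small disks around each $y_{j_i}$ and $y_{k_i}$ and then attaching embedded 2-annuli to parallels of $G$ whose boundary circles and ambient-isotopy class (as annuli in a thickening of $f \cup G$) are determined by the common isotopy class of the guide arcs on $f$.

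The main obstacle is this last identification between the Norman tubes, which are based on the $x_i$-sheet at each $p_i^\pm$, and the surgery tubes, which naturally emerge on the $y_i$-sheet along $\beta_i$. Reconciling the two descriptions requires a local analysis in the standard $\R^4$-model of a transverse double point: the boundary circle of the small removed $y_i$-disk serves simultaneously as the natural meridian for a Norman tube guided by an $x_i$-sheet arc and for the $\beta_i$-tube emerging on the $y_i$-sheet, so an ambient isotopy supported in a small 4-ball around each $p_i^\pm$ transitions between the two descriptions and completes the identification.
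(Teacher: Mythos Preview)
Your overall strategy matches the paper's: pass to a capped surface, swap Whitney caps for dual $G$-caps via Lemma~\ref{lem:capped-surface-isotopy}, and identify the surgered sphere as a Norman sphere. The error is in which Whitney arc you tube along, and this propagates to a wrong sheet identification that your final paragraph does not repair.

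Recall the conventions from $\textsf{C}_{\textrm{W-arcs}}$: $\alpha_i=f(a_i)$ with $a_i$ joining $x_i^+$ to $x_i^-$, so $\alpha_i$ approaches each $p_i^\pm$ in the $x$-sheet; whereas $\beta_i=f(b_i)$ with $b_i$ joining $y_i^+$ to $y_i^-$, so $\beta_i$ lies in the $y$-sheet. Tubing $f$ to itself along $\beta_i$ uses normal circles to the $y$-sheet, and at $p_i^\pm$ such a normal circle lies in the transverse $x$-sheet. Hence the disks removed by your tubing are neighborhoods of $x_{j_i},x_{k_i}$ in the domain, not of $y_{j_i},y_{k_i}$ as you claim. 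After surgering on $\cC_G$ you obtain a sphere in which the $x$-disks have been replaced by tubes guided by arcs on the $y$-sheet (halves of $\beta_i$ followed by $\gamma_i$); this is the local model of a Norman sphere for the \emph{opposite} sheet choice $\sy$, not for $\sx$.

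Your proposed local fix fails for the same reason: the boundary circle of the disk you have actually removed lies in the $x$-sheet. That circle is indeed the attaching circle of the $\beta_i$-tube, but it is a meridian to the $y$-sheet, not to the $x$-sheet, so it cannot be the attaching circle of a Norman tube guided by an $x$-sheet arc. The mismatch is not resolvable by an isotopy supported near $p_i^\pm$.

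The fix is to tube along $\alpha_i$ rather than $\beta_i$ (Lemma~\ref{lem:W-move-equals-surgery-on-tube-cap} is symmetric in the two arcs). Then the removed disks are the $y$-disks and the guiding arcs lie on the $x$-sheet, so the surgered sphere is visibly an $\sx$-compatible Norman sphere. This is exactly the paper's argument: it isotopes each $\alpha_i$ across $\partial D_z$, splits it there into two arcs $\sigma_i^\pm$ starting at $x_i^\pm$, and observes that $f^G_\cZ$ for $\cZ=\{\sigma_i^\pm\}$ is isotopic to the result of tubing $f$ along the isotoped $\alpha_i$ and surgering the meridional $G$-cap, hence to $f_\cW$ by Lemma~\ref{lem:W-move-equals-surgery-on-tube-cap}.
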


\begin{proof}
We apply the first step in the proof of Lemma~\ref{lem:slide-w-disk-across-D-z} simultaneously to all $\alpha_i$:
Let $\cA:=\partial\cW$ be the Whitney arcs, and let $\sx^\pm$ be the choice of pairings determined by $\cA$. To construct the Norman arcs $\cZ$, isotope the Whitney arcs $\alpha_i$ just across $\partial D_z$ and extend this isotopy to an isotopy of $W_i$ in a collar on $\alpha_i$; see Figure~\ref{fig:Norman-arcs-1} where $D_{z_0}:=f^{-1}(D_z)$. This can be done keeping the $\alpha_i$ disjoint from each other and from all $\beta_j$. Deleting the part of the new $\alpha_i$ that lies in the interior of $D_z$ gives two arcs $\sigma_i^\pm$ which start at $x_i^\pm$ and end in $\partial D_z$.
\begin{figure}[ht!]
        \centerline{\includegraphics[scale=.175]{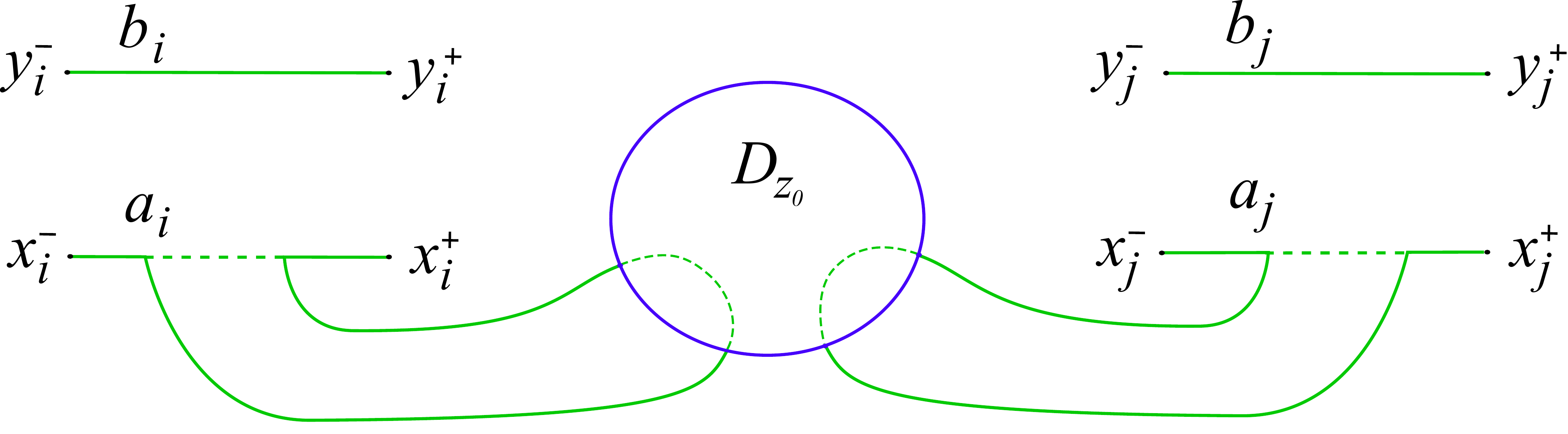}}
        \caption{The preimages $a_i$ and $a_j$ of arcs $\alpha_i$ and $\alpha_j$ after the isotopy.}
        \label{fig:Norman-arcs-1}
\end{figure} 
 
Define $\cZ:=\{\sigma_1^-,\sigma_1^+,\dots, \sigma_n^-,\sigma_n^+\}$ and observe that since the corresponding copies $G_i^\pm$ of $G$ are oppositely oriented, the Norman sphere $f^G_\cZ $ is isotopic to the result of tubing $f$ to itself along each $\alpha_i\subset\partial W_i$, then surgering a meridional cap dual to $\alpha_i$ that has been tubed into $G$ as in Figure~\ref{fig:tube-caps-2}. So $f^G_\cZ $ is isotopic to $f_\cW$ by Lemma~\ref{lem:W-move-equals-surgery-on-tube-cap}.
\end{proof}

In the proofs of the next two lemmas we describe isotopies of Norman spheres using homotopies of Norman arcs by requiring that the radii of the tubes are not equal at any temporarily-created intersection between Norman arcs during a homotopy. Following Gabai, we indicate the tube of smaller radius as an under-crossing of the corresponding Norman arc.

\begin{lem}[Lemma~5.11(ii) of \cite{Gab}]\label{lem:Norman-cyclic-ordering}
Given any $\cZ'\in\textsf{C}_{\textrm{N-arcs}}$ and points $z_1,\dots,z_{2n} \in \partial D_z$, there is a choice of Norman arcs $\cZ=\{\sigma_1,\dots,\sigma_{2n}\}$, compatible with the same $\sx\in\textsf{C}_{\textrm{sheets}}$ as $\cZ'$, such that $\sigma_i$ ends in $z_i$ and the Norman spheres $f^G_{\cZ'} $ and $f^G_\cZ $ are isotopic. 
\end{lem}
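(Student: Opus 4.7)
The plan is to produce $\cZ$ by modifying $\cZ'$ one arc at a time, realizing each modification by an explicit ambient isotopy of the Norman sphere in $M$. So it suffices to prove the following reduction: given $\cZ'$, one target point $z_j\in\partial D_z$ (with all other $z_i$ equal to the original endpoints of $\sigma'_i$), there is a compatible $\cZ$ whose $j$-th arc ends at $z_j$, with $f^G_{\cZ}$ isotopic to $f^G_{\cZ'}$. Iterating the reduction $2n$ times (moving endpoints one at a time) then handles the general case.

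For the reduction step, I would pick an embedded path $\lambda\subset\partial D_z$ from the old endpoint $z'_j$ to $z_j$, and define $\sigma_j$ to be the arc obtained by concatenating $\sigma'_j$ with $\lambda$ and then pushing a small collar of the interior of $\lambda$ slightly off $\partial D_z$, so that $\sigma_j$ is an embedded arc from $x_j$ to $z_j$. Do the push into whichever side of $\partial D_z$ is free of the other Norman arcs near the endpoints $\lambda$ crosses (the disk $D_z$ is ``empty'' in the sense that the other $\sigma'_i$ and all of $\cW$-type data are outside $D_z$; the only obstacles are the other $\sigma'_i$ emanating from points of $\lambda\cap\{z'_i\}_{i\neq j}$). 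For the remaining arcs, set $\sigma_i:=\sigma'_i$ when $i\neq j$; these need only be modified near $z'_i$ by a small perturbation so as to stay disjointly embedded from the new $\sigma_j$.

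To see that $f^G_{\cZ}$ is isotopic to $f^G_{\cZ'}$, I would realize the modification by an ambient isotopy of $M$ that drags the ``foot'' of the $j$-th tube along $\lambda$. Concretely, the tube around $\sigma_j$ agrees with the tube around $\sigma'_j$ away from a neighborhood of $\partial D_z$, and near $\partial D_z$ one sees the foot sliding along $\lambda$ dragging its parallel copy $G_j$ of $G$ with it. Whenever $\lambda$ crosses another endpoint $z'_k$, realize the passage by choosing the radius of the $j$-th tube strictly smaller than that of the $k$-th tube there, so that the $j$-th tube foot passes through the interior of the $k$-th tube foot as an under-crossing: this is the same smaller-radius/under-crossing convention recalled just before the lemma, and (as in Gabai's original argument) such an under-crossing extends to a genuine ambient isotopy in $M$ because the fourth dimension provides room to slide the thinner tube past the thicker one, and because the abundance of parallel copies of $G$ meeting $D_z$ provides the necessary parallel structure near $\partial D_z$.

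The main obstacle is the bookkeeping for the under/over crossings: I must verify that the isotopy performed at each crossing event extends globally without introducing new self-intersections or collisions with the other tubes or with $\cW$-data that have been left untouched. This is handled by performing the crossings sequentially in the order they occur along $\lambda$, and by choosing the pushed-off copy of $\lambda$ in $f$ (rather than in $\partial D_z$) to avoid the endpoints of other arcs; the disjointness of the final collection $\{\sigma_i\}$ is then automatic once $\sigma_j$ is placed in $f\smallsetminus(D_z\cup\bigcup_{i\neq j}\sigma'_i)$ except in arbitrarily small neighborhoods of the crossing points. Since this is exactly the statement of Gabai's Lemma~5.11(ii) with the added requirement of disjoint embeddedness of the arcs (which we arrange by the small pushoff above), no new geometric input beyond Gabai's argument is required.
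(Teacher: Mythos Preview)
Your proposal is correct and uses the same key geometric idea as the paper: tubes of different radii can pass through one another via the under-crossing convention, so endpoints on $\partial D_z$ can be rearranged at will while preserving the isotopy class of the Norman sphere. The only organizational difference is that the paper reduces to a single transposition of \emph{neighboring} endpoints (since adjacent transpositions generate the symmetric group), whereas you slide one endpoint past possibly several others in a single step; your step is just a composite of the paper's transpositions, so the two arguments are essentially identical.
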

\begin{proof}
It suffices to observe that neighboring $z_i$ and $z_j$ in $\partial D_z$ can be exchanged by pushing the tube around $\sigma_j$ across (and inside) the tube around $\sigma_i$, as in Figure~\ref{fig:Norman-arcs-re-order-1} and Figure~\ref{fig:Norman-arcs-re-order-2}.
\end{proof}
\begin{figure}[ht!]
        \centerline{\includegraphics[scale=.15]{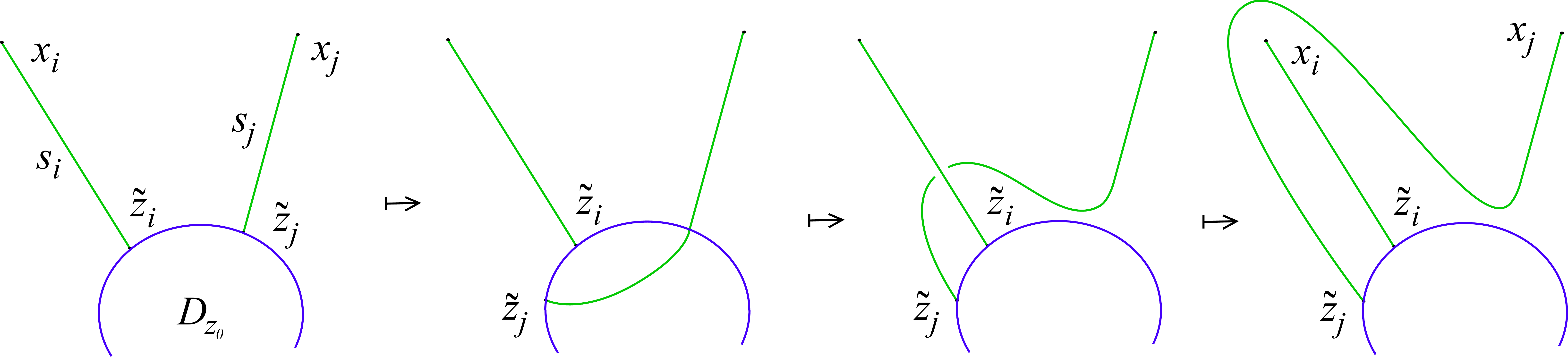}}
        \caption{The indicated homotopy of $s_i$ and $s_j$ corresponds to an isotopy of Norman spheres which slides the tube around $\sigma_j$ inside of the tube around $\sigma_i$. See Figure~\ref{fig:Norman-arcs-re-order-2}.}
        \label{fig:Norman-arcs-re-order-1}
\end{figure}
\begin{figure}[ht!]
        \centerline{\includegraphics[scale=.52]{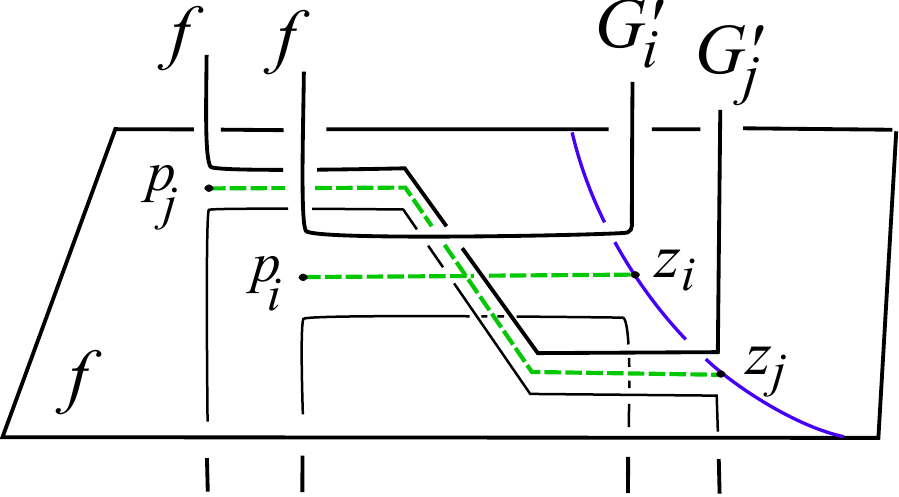}}
        \caption{The image of the third-from-left picture in Figure~\ref{fig:Norman-arcs-re-order-1}. Here the smaller radius of the tube around $\sigma_j$ compared to the tube around $\alpha_i$ corresponds to $s_j$ crossing under $s_i$ in Figure~\ref{fig:Norman-arcs-re-order-1}.}
        \label{fig:Norman-arcs-re-order-2}
\end{figure}

\begin{lem}\label{lem:Norman-independence-of-arcs}
If two choices of Norman arcs $\cZ, \cZ'\in\textsf{C}_{\textrm{N-arcs}}$ are compatible with the same $\sx\in\textsf{C}_{\textrm{sheets}}$ then the Norman spheres $f^G_\cZ $ and $f^G_{\cZ'} $ are isotopic. 
\end{lem}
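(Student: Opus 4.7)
The plan is to reduce to matching endpoints on $\partial D_z$ and then realize a generic homotopy between the preimage arcs in $S^2$ as a sequence of tube-through-tube isotopies of Norman spheres.

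First, apply Lemma~\ref{lem:Norman-cyclic-ordering} to $\cZ'$ with the endpoint set $\{z_1,\ldots,z_{2n}\}\subset\partial D_z$ of $\cZ$. This produces a third Norman arc system, isotopic to $\cZ'$ and compatible with $\sx$, whose arcs end at the $z_i$. Relabelling, we may assume from the start that $\sigma_i$ and $\sigma_i'$ share both endpoints, namely the starting point $p_i = f(x_i)$ (determined by $\sx$) and the common endpoint $z_i\in\partial D_z$. Let $s_i,s_i'\subset S^2$ be the preimage arcs, both running from $x_i$ to $f^{-1}(z_i)\in\partial D_{z_0}$, where $D_{z_0}:=f^{-1}(D_z)$.

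Next, transform $\cZ$ into $\cZ'$ one arc at a time. At the $i$-th stage, pick a generic $1$-parameter family $\{s_i^t\}_{t\in[0,1]}$ of embedded arcs rel endpoints from $s_i$ to $s_i'$ in the complement of the finitely many preimages $\{x_j,y_j:j\neq i\}$ and of $\mathrm{int}(D_{z_0})$. Such a family exists because $s_i$ and $s_i'$ are homotopic rel endpoints in $S^2$ (which is simply connected), and avoiding a finite set of points by a $1$-parameter family of arcs is a codimension-$2$ condition; moreover, we arrange that $s_i^t$ meets each of the already-fixed arcs $s_j$ (for $j\neq i$, or $s_k'$ for $k<i$ at later stages) transversely except at finitely many exceptional times where a single tangential crossing creates or destroys a pair of transverse intersections. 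Since a generic $1$-parameter family of arcs between two points in a surface stays embedded, each $s_i^t$ remains embedded throughout.

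The only generic event is thus a tangential crossing of $\sigma_i^t$ with some $\sigma_j$ at an interior point of both arcs. Each such event is realized in $M$ as an ambient isotopy of the current Norman sphere by exactly the tube-through-tube maneuver used to prove Lemma~\ref{lem:Norman-cyclic-ordering}: momentarily shrink the tube around $\sigma_j$ (together with its attached parallel copy $G_j$) to strictly smaller radius than the tube around $\sigma_i^t$, slide it through the interior of the larger tube across the crossing event (cf.\ Figures~\ref{fig:Norman-arcs-re-order-1} and~\ref{fig:Norman-arcs-re-order-2}), and then restore the radii. This produces an ambient isotopy from $f^G_\cZ$ to the Norman sphere with $\sigma_i$ replaced by $\sigma_i'$. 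Iterating for $i=1,\ldots,2n$ yields the isotopy $f^G_\cZ\simeq f^G_{\cZ'}$.

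The main technical point to verify is that interior crossings of Norman arcs — not only the boundary reorderings handled in Lemma~\ref{lem:Norman-cyclic-ordering} — can always be realized by the same sliding isotopy of tubes. The geometry is identical: near the crossing event, the two tubes form a standard transverse pair in a $4$-ball and the nested-radius slide goes through without any obstruction from $f$ or from $G$, since by construction the generic homotopy avoids all other preimage points of self-intersections and $\mathrm{int}(D_{z_0})$. Granted this, the plan produces the required isotopy and completes the proof.
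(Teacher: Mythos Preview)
Your argument has a genuine gap at the step where you claim the one-parameter family $\{s_i^t\}$ can be chosen to avoid the finite set $\{x_j,y_j:j\neq i\}$. The codimension count is wrong: a one-parameter family of arcs sweeps out a $2$-dimensional map into the surface $S^2$, so a generic family hits any fixed point at isolated parameter values rather than missing it entirely. Equivalently, $S^2$ minus a finite set of points and $\mathrm{int}(D_{z_0})$ is not simply connected, so there is no reason for $s_i$ and $s_i'$ to be homotopic rel endpoints in that complement.

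More importantly, you omit the point $y_i$ from the list of points to avoid, and this is precisely the dangerous one. When $s_i^t$ crosses $y_i$, the tube around $\sigma_i^t$ would have to pass through its own attaching region at $p_i$: this is the ``handle sliding over itself'' obstruction noted in the paper (and in \cite[Rem.~5.10]{Gab}), and it is \emph{not} realizable by the nested-tube isotopy you invoke. Crossings with $y_j$ for $j\neq i$, or with $x_j$, can in fact be realized by tube maneuvers, but you do not treat those events either --- your only listed event is an arc--arc crossing.

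The paper's proof sidesteps the arc-homotopy obstruction entirely. It converts each Norman sphere into the result of Whitney moves, $f^G_{\cZ}\simeq f_{\cW}$ and $f^G_{\cZ'}\simeq f_{\cW'}$, by gluing adjacent $\sigma$-arcs into $\alpha$-arcs and choosing common $\beta$-arcs. Proposition~\ref{prop:common-arcs-w-moves} then gives the isotopy once the $\alpha'_i$ are made disjoint from the $\beta_j$; achieving this disjointness is exactly where the delicate push-across-$y_j^\mp$ (never $y_j^\pm$) moves are used. So the substantive content of the lemma lies in circumventing the self-slide obstruction, which your outline does not address.
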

As a consequence, we get a Norman sphere $f^G_\cZ =: f^G_\sx\in  \cR^G_{[f]}$ for a given choice of sheets $\sx$.
\begin{proof}
Let $\sx^\pm$ be any compatible choice of pairings for $\sx$. By Lemma~\ref{lem:Norman-cyclic-ordering} 
we may assume that $\cZ=\{\sigma_1^-,\sigma_1^+,\dots, \sigma_n^-,\sigma_n^+\}$ induces the cyclic ordering $(z_1^-,z_1^+,z_2^-,z_2^+,\ldots,z_n^-,z_n^+)$ in $\partial D_z$, where $z_i^\pm$ is the end-point of $\sigma^\pm_i$. 

We will first construct a choice of Whitney disks $\cW$ for $f$ such that $f^G_\cZ $ is isotopic to $f_\cW$, by performing essentially the inverse of the steps in the proof of Lemma~\ref{lem:Norman-for-every-Whitney}. 
For each $i$, denote by $\alpha_i$ the union of the embedded arcs $\sigma_i^-$ and $\sigma_i^+$ together with a short arc in $\partial D_z$ that runs between $z_i^+$ and $z_i^-$. 
These $\alpha_i$ then form one half of a collection of Whitney arcs for the choice of pairings $\sx^\pm$.

Choose appropriate arcs $\beta_i$ to complete the half collection $\alpha_i$ to a $\sx^\pm$-compatible choice of Whitney arcs $\cA=\{\alpha_1,\beta_1,\dots,\alpha_n,\beta_n\}$. By Lemma~\ref{lem:choice-of-disks-exists} there exists a collection $\cW\in\textsf{C}_{\textrm{W-disks}}$ with boundary $\cA$. 

It follows that $f^G_\cZ $ is isotopic to $f_\cW$ by Lemma~\ref{lem:W-move-equals-surgery-on-tube-cap}, since $f^G_\cZ $ is isotopic to the result of surgering the capped surface formed by tubing $f$ along the $\alpha_i$ arcs, as observed in the proof of Lemma~\ref{lem:Norman-for-every-Whitney}.
 
By Lemma~\ref{lem:Norman-cyclic-ordering} 
we may assume that $\cZ'=\{{\sigma'_1}^-,{\sigma'_1}^+,\dots, {\sigma'_n}^-,{\sigma'_n}^+\}$ induces the same cyclically ordered points $(z_1^-,z_1^+,z_2^-,z_2^+,\ldots,z_n^-,z_n^+)$ in $\partial D_z$ as $\cZ$, with $z_i^\pm$ the end-point of ${\sigma'_i}^\pm$.  

For each $i$, denote by $\alpha'_i$ the union of the embedded arcs ${\sigma'_i}^-$ and ${\sigma'_i}^+$ together with a short arc in $\partial D_z$ that runs between $z_i^+$ and $z_i^-$. These $\alpha'_i$ form a half collection of Whitney arcs for the choice of pairings $\sx^\pm$.

Now pause to observe that if each $\alpha'_i$ is disjoint from all the previously chosen $\beta_j$, then the unions $\alpha'_i\cup\beta_i$ are Whitney circles for a clean collection $\cW'$ of Whitney disks on $f$ by Lemma~\ref{lem:choice-of-disks-exists}, and $f^G_\cZ $ is isotopic to $f^G_{\cZ'}$.
This is because the collections $\cW$ and $\cW'$ share the common $\beta_i$-arcs so $f_\cW$ would be isotopic to $f_{\cW'}$ by Proposition~\ref{prop:common-arcs-w-moves}.
Then analogously to the above argument that $f^G_{\cZ} $ is isotopic to $f_{\cW}$ we have that $f^G_{\cZ'} $ is isotopic to $f_{\cW'}$, and hence $f^G_{\cZ'} =f_{\cW'}=f_\cW=f^G_{\cZ}\in\cR^G_{[f]}$ completing the proof.

So it just remains to get $\alpha'_i\cap\beta_j=\emptyset$ for all $i,j$.

Since the $\alpha'_i$ are constructed from ${\sigma'_i}^\pm$ by adding short arcs in $\partial D_z$, it suffices to show that we may push all the 
${\sigma'_i}^\pm$ off all the $\beta_j$ in a way that corresponds to an isotopy of the Norman sphere $f^G_{\cZ'} $. 
It will be convenient to describe this pushing-off construction in the domain of $f$, so we want to get ${s'_i}^\pm\cap b_j=\emptyset$,
where $b_j\subset S^2$ is an embedded arc from $y_j^-$ to $y_j^+$ with $f(b_j)=\beta_j$, and ${s'_i}^\pm\subset S^2$ goes from $x_i^\pm$ to $f^{-1}(z_i^\pm)$
with $f({s'_i}^\pm)={\sigma'_i}^\pm$. 

Our construction will work with one $b_j$ at a time, removing intersections with all ${s'_i}^\pm$ in a way that does not create new intersections in any previously cleaned-up $b_k$.
This will be accomplished by describing an isotopy of the Norman sphere tubes induced by pushing (as needed) each ${s'_i}^\pm$ across the endpoints $y_j^\pm$ of $b_j$, using the fact that a disk around $y_j^\pm$ maps to a disk in the Norman sphere consisting of a tube along $\sigma_j^\pm$ into $G^\pm_j$. As observed by Gabai \cite[Rem.5.10]{Gab}, in the case $i=j$ we are \emph{not} able to push 
${s'_j}^\pm$ across $y_j^\pm$, but we \emph{are} able to push ${s'_j}^\pm$ across the opposite-signed $y_j^\mp$.
This is similar to the fact that a handle cannot be slid over itself.

Consider first the case where some $b_j$ only has intersections with a single ${s'_i}^\pm$ (Figure~\ref{fig:beta-push-off-1} left). If $i\neq j$ then these intersections can all be eliminated by an isotopy of ${s'_i}^\pm$ across $y_j^+$ (Figure~\ref{fig:beta-push-off-1} right).
If $i= j$ then $b_j\cap {s'_j}^\pm$ can be eliminated by an isotopy of ${s'_j}^\pm$ across the oppositely-signed $y_j^\mp$.
These isotopies pushing ${s'_j}^\pm$ off $b_j$ can be done without creating any intersections among the parallel strands of ${s'_j}^\pm$.
\begin{figure}[ht!]
        \centerline{\includegraphics[scale=.22]{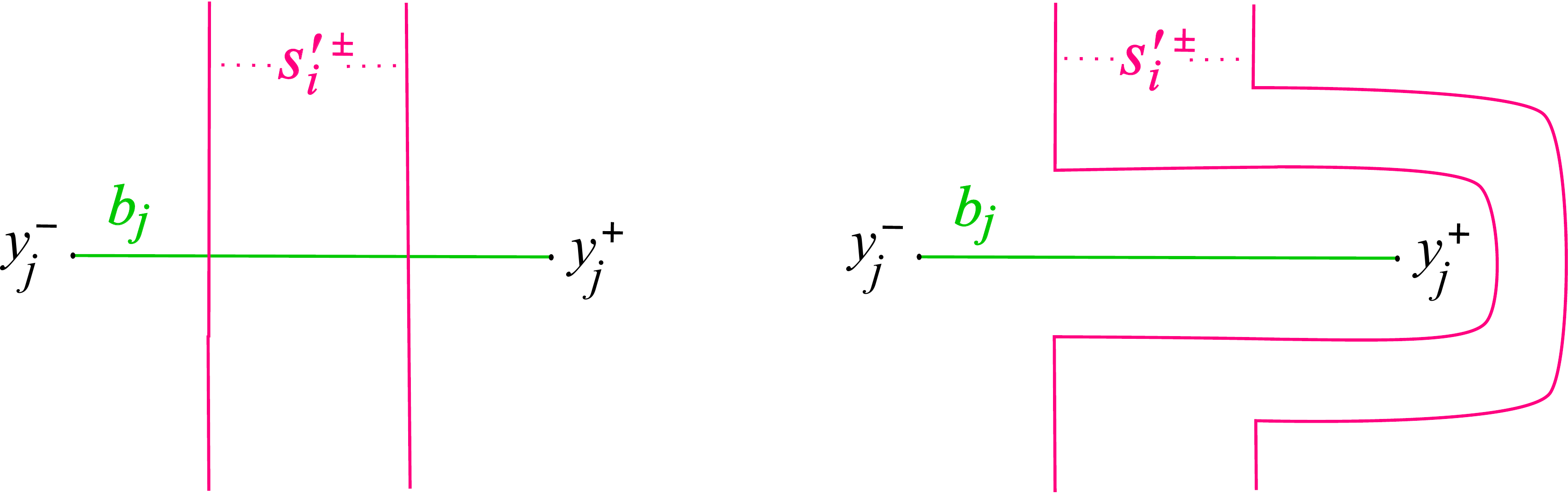}}
        \caption{For $i\neq j$ all strands of ${s'_i}^\pm$ can be pushed off $b_j$ across $y_j^+$.}
        \label{fig:beta-push-off-1}
\end{figure}

Next consider the case where $b_j$ intersects only the two arcs ${s'_j}^+$ and ${s'_j}^-$, each in a single point
$r^+=b_j\cap {s'_j}^+$ and $r^-=b_j\cap {s'_j}^-$. If $r^\pm$ is adjacent to $y_j^\mp$ in $b_j$, then each $r^\pm$ can be eliminated as in the previous case by pushing ${s'_j}^\pm$ across $y_j^\mp$.  
If $r^\pm$ is adjacent to $y_j^\pm$ in $b_j$, then first eliminate $r^-$ by pushing ${s'_j}^-$ across $y_j^+$ and under ${s'_j}^+$, as in Figure~\ref{fig:beta-push-off-2} left. 
Then eliminate $r^+$ by pushing ${s'_j}^+$ across $y_j^-$ and over ${s'_j}^-$, as in Figure~\ref{fig:beta-push-off-2} center. At this point we have 
$b_j\cap {s'_j}^\pm=\emptyset$, but ${s'_j}^+$ intersects ${s'_j}^-$ in two points $q$ and $q'$. Each of $q$ and $q'$ can be eliminated by pushing ${s'_j}^-$ along ${s'_j}^+$ and across (under)
$x_j^+$ as in Figure~\ref{fig:beta-push-off-2} right, since the tube around $\sigma_j^-$ has a smaller radius.
Note that the pushing of ${s'_j}^-$ along ${s'_j}^+$ will create new intersections between ${s'_j}^-$ and any other $b_k$ with $k\neq j$ that intersected ${s'_j}^+$ along the strand of the original ${s'_j}^+$ between $x_j^+$ and $r^+$. But such new intersections only are created in a $b_k$ that has yet to be cleaned up.
\begin{figure}[ht!]
        \centerline{\includegraphics[scale=.2]{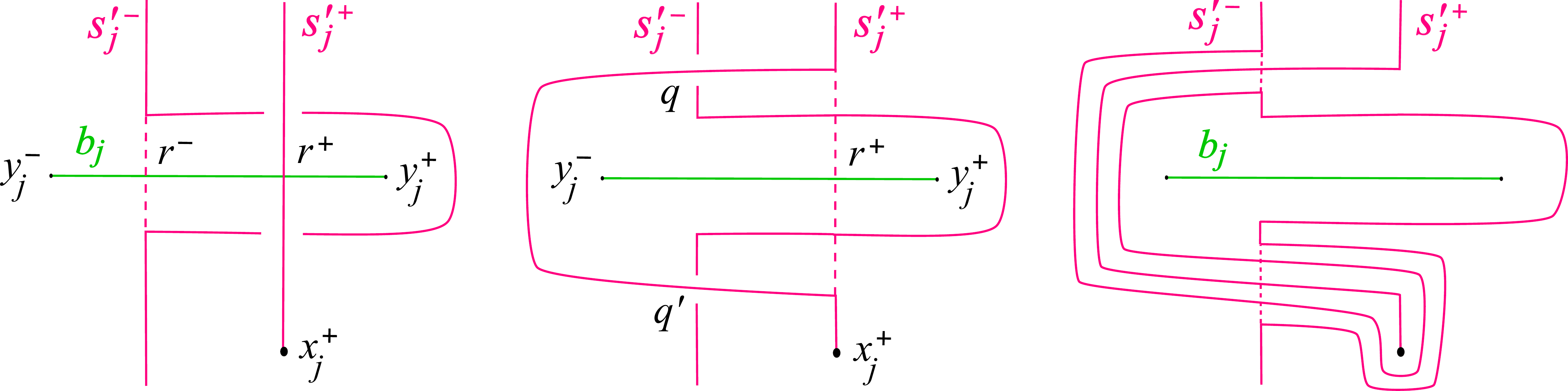}}
        \caption{}
        \label{fig:beta-push-off-2}
\end{figure}

The construction of the previous paragraph can be adapted to the general case where $b_j$ intersects arbitrary strands of ${s'_i}^\pm$ for arbitrary $i$ as follows.
(Picture the ${s'_j}^\pm$-arcs in Figure~\ref{fig:beta-push-off-2} as two among several parallel collections of strands.) First simultaneously push all strands of ${s'_j}^-$ and all strands of any other ${s'_i}^\pm$ with $i\neq j$ under any and all strands of ${s'_j}^+$ and across $y_j^+$.
This can be done in parallel, without creating any intersections among the strands that are being isotoped.
Then simultaneously push any and all strands of ${s'_i}^+$ over all other strands and across $y_j^-$. This can be done in parallel, so that the only resulting intersections between $s'$-arcs are where ${s'_i}^+$ passes over other strands. At this point $b_j$ is disjoint from all ${s'_i}^\pm$, and the intersections among $s'$-arcs can all be eliminated by pushing the under-crossing arcs along ${s'_j}^+$ across (under) $x_j^+$.
\end{proof}

\subsection{Independence of pairings and Whitney arcs}\label{sec:choices-of-arcs-and-pairings}

From Lemmas~\ref{lem:Norman-for-every-Whitney} and~\ref{lem:Norman-independence-of-arcs} we get:

\begin{cor}\label{cor:w-disk-independence-of-arcs-and-pairings}
If two choices of Whitney disks $\cW, \cW'\in\textsf{C}_{\textrm{W-disks}}$ are each compatible with the same choice of sheets $\sx\in\textsf{C}_{\textrm{sheets}}$, then $f_\cW$ is isotopic to $f_{\cW'}$. In particular, $f_\cW\in \cR^G_{[f]}$ is independent of $\sx$-compatible choices of pairings, Whitney arcs and Whitney disks. $\hfill\square$
\end{cor}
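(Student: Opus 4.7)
The plan is to reduce the statement to a comparison of Norman spheres using the two preceding lemmas. Given $\cW$ and $\cW'$ both compatible with the same sheet choice $\sx$, first apply Lemma~\ref{lem:Norman-for-every-Whitney} twice: it produces an $\sx$-compatible choice of Norman arcs $\cZ$ with $f^G_\cZ$ isotopic to $f_\cW$, and another $\sx$-compatible choice $\cZ'$ with $f^G_{\cZ'}$ isotopic to $f_{\cW'}$. Then Lemma~\ref{lem:Norman-independence-of-arcs} closes the loop, since both $\cZ$ and $\cZ'$ are compatible with the \emph{same} $\sx$, so $f^G_\cZ$ and $f^G_{\cZ'}$ are isotopic. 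Chaining the three isotopies yields $f_\cW \simeq f_{\cW'}$.

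For the ``in particular'' clause, I would observe that once $\sx$ is fixed, any $\sx$-compatible choice of pairings $\sx^\pm$, Whitney arcs $\cA$ or Whitney disks $\cW$ can by Lemma~\ref{lem:disks-exist-for-all-choices} be completed to some $\sx$-compatible $\cW \in \textsf{C}_{\textrm{W-disks}}$. Thus any two such completions fall under the hypothesis of the first part, and the resulting embeddings $f_\cW$ and $f_{\cW'}$ are isotopic. In particular the element of $\cR^G_{[f]}$ represented by $f_\cW$ depends only on $\sx$, so one can unambiguously write $f^G_\sx := [f_\cW] \in \cR^G_{[f]}$ and this matches the notation $f^G_\sx$ introduced after Lemma~\ref{lem:Norman-independence-of-arcs}.

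There is essentially no obstacle here beyond bookkeeping: the real geometric work has already been done in Proposition~\ref{prop:common-arcs-w-moves} (which is used inside Lemma~\ref{lem:Norman-independence-of-arcs}) and in the arc-pushing arguments that establish the two Norman-sphere lemmas. The only thing to be careful about is the compatibility condition, namely that the Norman arcs produced by Lemma~\ref{lem:Norman-for-every-Whitney} really are $\sx$-compatible and not merely compatible with the pairings $\sx^\pm$ determined by $\partial\cW$; but this is automatic because a compatible Norman-arc family starts precisely at the sheet choices $x_i$ determined by the pairings, which are in turn inherited from $\sx$. Hence the entire argument is a three-line chase through Lemmas~\ref{lem:Norman-for-every-Whitney} and~\ref{lem:Norman-independence-of-arcs}.
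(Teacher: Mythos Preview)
Your proposal is correct and is exactly the approach the paper takes: the corollary is stated immediately after Lemmas~\ref{lem:Norman-for-every-Whitney} and~\ref{lem:Norman-independence-of-arcs} with nothing but a $\square$, and your three-line chase through those two lemmas is precisely the intended argument. One minor notational point: the paper writes $f_\sx$ (not $f^G_\sx$) for the class $[f_\cW]$, reserving $f^G_\sx$ for the Norman sphere itself; these coincide up to isotopy, so this is purely cosmetic.
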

As a consequence, $f_\cW\in \cR^G_{[f]}$ only depends on $\sx$ and it's safe to write $f_\cW=:f_\sx\in \cR^G_{[f]}$, where the existence of an
$\sx$-compatible $\cW$ is guaranteed by Lemma~\ref{lem:disks-exist-for-all-choices}.

By the same lemmas we also see that $f_\sx$ is isotopic to the Norman sphere $f^G_\sx$, whose isotopy class therefore only depends on $\sx$ but not  on $G$.

To complete the proof of Gabai's LBT it remains to consider the $\sx$-dependence of $f_\sx$.

\subsection{Double sheet changes}\label{sec:double-sheet-change}
Let $\sx=\{x_1,\dots,x_{2n}\}\in\textsf{C}_{\textrm{sheets}}$ and recall that $g(x_i)\in \pi_1M$ is represented by a double point loop through $p_i$ which is the image of an oriented arc from $x_i$ to $y_i$, where $f^{-1}(p_i)=\{x_i, y_i\}$. Switching the choice $x_i$ to $y_i$ changes $g(x_i)$ to $g(y_i)=g(x_i)^{-1}$ while keeping the sign $\epsilon_i$ of $p_i$. Changing the whisker for $f$ changes all $g(x_i)$ by a fixed conjugation and also keeps the signs.

Assume that for two indices $i,j$ we have $\epsilon_j=-\epsilon_i$ and $g(x_j)=g(x_i)=:g$. 
Then a different choice of sheets $\sx'\in\textsf{C}_{\textrm{sheets}}$ can be defined by replacing $x_i$ by $y_i$ and replacing $x_j$ by $y_j$,
since it satisfies our requirement $(*)$ in Section~\ref{sec:choices} with the canceling terms 
$\epsilon_i\cdot g+\epsilon_j\cdot g=0$ replaced by $\epsilon_i\cdot g^{-1}+\epsilon_j\cdot g^{-1}=0$.

We will refer to such a change of sheet choice as a \emph{double sheet change}.

\begin{lem}\label{lem:double-sheet-change}
If $\sx,\sx'\in\textsf{C}_{\textrm{sheets}}$ differ by a double sheet change,
then $f_{\sx}=f_{\sx'}\in \cR^G_{[f]}$. 
\end{lem}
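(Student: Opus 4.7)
The plan is to leverage the freedom established in Corollary~\ref{cor:w-disk-independence-of-arcs-and-pairings}: since $f_\sx$ depends only on the sheet choice $\sx$, I am free to compute both $f_\sx$ and $f_{\sx'}$ using any compatible pairings, Whitney arcs, and Whitney disks. I will arrange these choices so that the same underlying collection of Whitney disks in $M$ witnesses both embeddings, with only the labels of the paired arcs $\alpha$ and $\beta$ interchanged at the single Whitney disk affected by the double sheet change.

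Concretely, at the indices $i,j$ of the double sheet change I would first pick an $\sx$-compatible pairing $\sx^\pm$ that pairs $p_i$ with $p_j$; this is legal because $\epsilon_j=-\epsilon_i$ and $g(x_i)=g(x_j)$. Choose disjointly embedded arcs $a,b\subset S^2$ with $\partial a=\{x_i,x_j\}$ and $\partial b=\{y_i,y_j\}$, set $\alpha_k:=f(a)$ and $\beta_k:=f(b)$, and extend to a full collection $\cA\in\textsf{C}_{\textrm{W-arcs}}$; lift to a clean $\cW\in\textsf{C}_{\textrm{W-disks}}$ via Lemma~\ref{lem:choice-of-disks-exists}. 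For $\sx'$, take the pairing that still pairs $p_i$ with $p_j$ but now uses the preimages $y_i,y_j$ (which have $g(y_i)=g(y_j)=g^{-1}$ under $\sx'$). The required Whitney arcs for $(\sx')^\pm$ at this pair run from $y_i$ to $y_j$ and from $x_i$ to $x_j$, so I set $a':=b$ and $b':=a$, giving $\alpha'_k=\beta_k$ and $\beta'_k=\alpha_k$, and leave all other arcs in $\cA$ unchanged. Then $\cA'=\cA$ as a set of arcs in $M$, and (after reparametrizing $W_k$ so that its lower semicircle is $\alpha'_k$) the collection $\cW$ itself also serves as a clean $\cA'$-compatible collection $\cW'$.

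The conclusion is then immediate from the fact recorded in Section~\ref{sec:clean-w-disks-moves}: the Whitney move $f_W$ is independent, up to isotopy, of which of the two Whitney arcs of $\partial W$ is chosen as the bubbled-over arc. Applied to the single disk $W_k$ where the swap occurs, this gives $f_\cW=f_{\cW'}$, and then Corollary~\ref{cor:w-disk-independence-of-arcs-and-pairings} yields $f_\sx=f_\cW=f_{\cW'}=f_{\sx'}$ in $\cR^G_{[f]}$. There is no substantial obstacle: once one recognizes that a double sheet change at a single pair is precisely an $\alpha\leftrightarrow\beta$ relabeling at the corresponding Whitney disk, the lemma reduces to this already-established symmetry of the Whitney move. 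The only bookkeeping to verify is that swapping $a$ and $b$ preserves $(\sx')^\pm$-compatibility (endpoints and disjointness), which follows directly from the definitions.
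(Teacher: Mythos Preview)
Your proof is correct and is essentially the same as the paper's: both arguments pair $p_i$ with $p_j$ by a single Whitney disk $W_k$, observe that the double sheet change amounts exactly to swapping the roles of the $\alpha$- and $\beta$-arcs of $W_k$ (realized by a reflection of the domain $D^2$), and conclude from the fact in Section~\ref{sec:clean-w-disks-moves} that $f_W$ is independent of which boundary arc is bubbled over. The paper's version is just more terse.
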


\begin{proof}
Let $\{x_i,x_j\}\subset\sx$ be the local sheets involved in the double sheet change. There is a choice of pairings $\sx^\pm$ compatible with $\sx$ such that $x_i=x_1^+$ and $x_j=x_1^-$ (or vice versa). 
Moreover, by Lemma~\ref{lem:disks-exist-for-all-choices} there is a choice of Whitney disks $\cW=\{W_1,\dots,W_n\}$ compatible with $\sx^\pm$, i.e.\ $p_i$ and $p_j$ are paired by $W_1$. 

Let $\cW':=\{W'_1, W_2,\dots,W_n\}$ be the choice of Whitney disks where $W_1'$ differs from $W_1$ only by precomposing with a reflection of the domain $D^2$ across the horizontal diameter.
This exchanges the two boundary arcs of $W_1$ but does not change the effect of doing a Whitney move since $W_1$ and $W_1'$ have the same image in $M$. 
Now observe that $\cW'$ is compatible with $\sx'$ and it follows from Corollary~\ref{cor:w-disk-independence-of-arcs-and-pairings} that
$f_\sx=f_{\cW}=f_{\cW'}=f_{\sx'}\in \cR^G_{[f]}$. 
\end{proof}

\subsection{Choice of sheets for double point loops not of order $2$} \label{sec:choice-of-sheets-non-trivial-not-in-T}
Consider a sheet choice $\sx=\{x_1,\dots,x_{2n}\}\in\textsf{C}_{\textrm{sheets}}$ such that for some $i$ we have $g_i:=g(x_i)\in\pi_1M$ with $g_i^2\neq 1$. If $\sx'$ is a different choice of sheets that takes $y_i$ as the preferred preimage instead of $x_i$, then this has the effect of inverting $g_i$. Since $g_i\neq g_i^{-1}$, in order for $\sx'$ to satisfy the requirement $(*)$ in Section~\ref{sec:choices} of a choice of sheets it follows that $\sx'$ must also switch some oppositely-signed $x_j$ to $y_j$, where $g(x_j)=g(x_i)$.
So $\sx'$ differs from $\sx$ by at least one double sheet change, and Lemma~\ref{lem:double-sheet-change} applied finitely many times gives:
\begin{lem}\label{lem:choice-of-sheets-non-trivial-not-in-T}
If choices of sheets $\sx,\sx'\in\textsf{C}_{\textrm{sheets}}$ only differ at self-intersections $p_i$ where the double point loops $g_i$ satisfy  $g_i^2\neq 1$, then $f_\sx=f_{\sx'}\in \cR^G_{[f]}$. \hfill$\square$
\end{lem}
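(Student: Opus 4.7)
The plan is to realize the transition from $\sx$ to $\sx'$ as a composition of finitely many double sheet changes, after which Lemma~\ref{lem:double-sheet-change} applied at each step gives the claim. I would induct on the cardinality of the difference set $S := \{i : x_i \neq x'_i\}$, with the base case $|S|=0$ trivial.

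For the inductive step, I would fix any $i_1 \in S$, set $h := g(x_{i_1})$, and (without loss of generality) take $\epsilon_{i_1}=+1$. By hypothesis $h^2 \neq 1$, so $h$ and $h^{-1}$ are distinct in $\pi_1 M$. Subtracting the requirements $(*)$ from Section~\ref{sec:choices} for $\sx$ and $\sx'$ gives
\[
\sum_{i \in S}\epsilon_i\bigl(g(x_i)^{-1}-g(x_i)\bigr)\;=\;0 \quad\text{in } \Z[\pi_1 M],
\]
and extracting the coefficient of $h$ yields
\[
\sum_{j\in S,\, g(x_j)=h}\epsilon_j \;=\; \sum_{j\in S,\, g(x_j)=h^{-1}}\epsilon_j.
\]
Since $i_1$ contributes $+1$ to the left-hand side, either (a) some $j\in S$ satisfies $g(x_j)=h$ and $\epsilon_j=-1$, or else (b) the left-hand side is strictly positive, forcing the right-hand side to be strictly positive as well and hence some $j\in S$ to satisfy $g(x_j)=h^{-1}$ and $\epsilon_j=+1$.

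In case (a), the pair $(i_1,j)$ is directly a double sheet change realizing two of the switches between $\sx$ and $\sx'$, so I would apply it and invoke the induction hypothesis on the smaller difference set. In case (b), the requirement $(*)$ for $\sx$ alone gives $\sum_{i:\,g(x_i)=h}\epsilon_i=0$, which together with $\epsilon_{i_1}=+1$ supplies an auxiliary index $k$ with $g(x_k)=h$ and $\epsilon_k=-1$; since case (a) fails, necessarily $k\notin S$. I would then perform a double sheet change on $(i_1,k)$, producing an intermediate sheet choice in which $g(x_k)$ has become $h^{-1}$, and follow it with a double sheet change on $(j,k)$---legal since $g(x_j)=g(x_k)=h^{-1}$ and $\epsilon_j=-\epsilon_k$---whose effect is to flip $j$ and restore $k$. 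The composition has net effect of switching exactly $i_1$ and $j$, again reducing $|S|$ by two. The main obstacle is this indirect case (b): verifying that the auxiliary $k$ can always be located outside $S$, and checking that the two-step composition truly leaves $k$ back where it started so that the induction hypothesis applies to a smaller difference set. Iterating, $\sx'$ becomes the result of finitely many double sheet changes applied to $\sx$, and Lemma~\ref{lem:double-sheet-change} yields $f_\sx = f_{\sx'} \in \cR^G_{[f]}$.
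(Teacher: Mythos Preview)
Your proof is correct and follows the paper's strategy: decompose the passage from $\sx$ to $\sx'$ into finitely many double sheet changes and apply Lemma~\ref{lem:double-sheet-change} at each step. Your case~(b), routing through an auxiliary index $k\notin S$, in fact makes rigorous a point the paper's one-paragraph argument leaves implicit---the paper asserts that the compensating switch in $S$ can always be taken to have opposite sign and the \emph{same} group element as $i_1$, whereas your analysis shows it may instead have the same sign and \emph{inverse} group element, requiring the two-step detour you describe.
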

Note that the assumption does not depend on the whisker for $f$.

\subsection{Choice of sheets for trivial double point loops}\label{sec:choice-of-sheets-trivial-element}
Let  $p_i$ be a self-intersection of $f$ with trivial group element $1\in\pi_1M$.
By the same construction as in the proof of Lemma~\ref{lem:choice-of-disks-exists}, $p_i$ admits a clean \emph{accessory disk} $A_i$, i.e.\ $A_i$ is a framed embedded disk
with interior disjoint from $f$ such that the boundary circle $\partial A_i\subset f$ changes sheets just at $p_i$. 
See \cite[Sec.7]{ST1} for details on accessory disks. If $p_i^+$ and $p_i^-$ are oppositely signed with trivial group element, then clean Whitney disks for $p_i^\pm$ can be constructed by banding together two clean accessory disks $A_i^\pm$ as in Figure~\ref{fig:accessory-disk-bands-1}, which shows two choices of bands resulting in Whitney disks $W_i$ and $W'_i$ which induce the possible different sheet choices. 
These Whitney disks are supported in a neighborhood of the union of the two accessory disks together with a generic disk in $f$ containing the accessory circles $\partial A_i^\pm$.
We will show that $W_i$ and $W_i'$ are isotopic via an ambient isotopy supported near one of the accessory disks. Hence $f_{W_i}$ is isotopic to $f_{W_i'}$.
\begin{figure}[ht!]
        \centerline{\includegraphics[scale=.175]{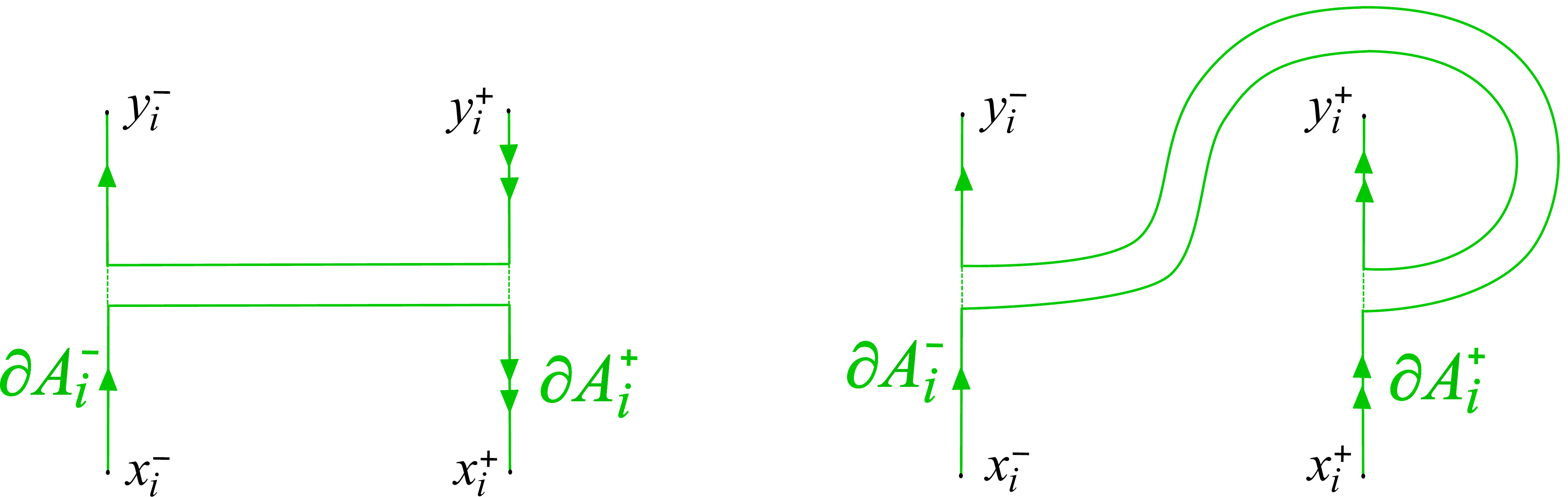}}
        \caption{Preimages of Whitney circles for $W_i$ (left) and $W_i'$ (right) formed by banding together accessory disks $A_i^\pm$ in two different ways, with $W_i$ satisfying the sheet choice $\{x_i^-,x_i^+\}$ and $W'_i$ satisfying the sheet choice $\{x_i^-,y_i^+\}$.  Applying the rotation isotopy of Lemma~\ref{lem:rotate-B4} to $A_i^+$ interchanges $x_i^+$ and $y_i^+$.}
        \label{fig:accessory-disk-bands-1}
\end{figure}

A regular neighborhood of a clean accessory disk is diffeomorphic to a standard model in $4$--space, so we work locally, dropping superscripts and subscripts. 

Let $(\Delta,\partial\Delta)\imra (B^4,S^3)$ be a generic $2$--disk with a single self-intersection $p$ which is the result of applying a cusp homotopy \cite[1.6]{FQ} to a standard $(D^2,S^1)\subset (B^4,S^3)$. Then $p$ admits a clean accessory disk $A$, and the following lemma will be proved:
\begin{lem}\label{lem:rotate-B4}
There is an ambient isotopy $h_s$ of $B^4$ such that
\begin{enumerate}
\item
$h_0$ is the identity,
\item
$h_1(\Delta\cup A)=\Delta\cup A$, 
\item $h_1|_A$ is a reflection of $A$ that fixes the double point of $\Delta$ in $\partial A$, and
\item
$h_1|_{\partial\Delta}$ is a rotation by $180$ degrees.
\end{enumerate}
\end{lem}

Applying Lemma~\ref{lem:rotate-B4} to a $B^4$-neighborhood of $A_i^+$ we see that the two Whitney disks $W_i$ and  $W'_i$ in
Figure~\ref{fig:accessory-disk-bands-1} are isotopic:
Rotating the right accessory arc $\partial A_i^+$ by $180$ degrees drags one band to the other, and hence one Whitney disk to the other.

\begin{proof}
To prove Lemma~\ref{lem:rotate-B4}, consider $\Delta$ as the trace of a null-homotopy of the Whitehead double of the unknot in $S^3 \times \{0\} =\partial B^4$ which pulls apart the clasp in a collar $S^3\times I\subset B^4$, creating the self-intersection $p$ admitting a clean accessory disk $A$, as in Figure~\ref{fig:accessory-disk-rotation-1}. Define the homotopy $h_s$ of $\Delta$ in the coordinates of Figure~\ref{fig:accessory-disk-rotation-1} to be rotation around the vertical axis by $180\cdot s$ degrees in each slice $S^3\times \{t\}: h_s(x,t) = (h_s(x),t)$, with $s$ and $t$ each going from $0$ to $1$. Extend $h_s$ to $B^4$ by tapering the rotation back to zero inside the collar $(x,u), u\in [1,0]$ of a smaller 4-ball that is the complement of the $S^3 \times I$ already used: $h_s(x,u):=h_{u\cdot s}(x)$, with $s$ and $u$ each going from $1$ to $0$ (and $h_s=$ identity outside this second collar). 
\begin{figure}[ht!]
        \centerline{\includegraphics[scale=.2]{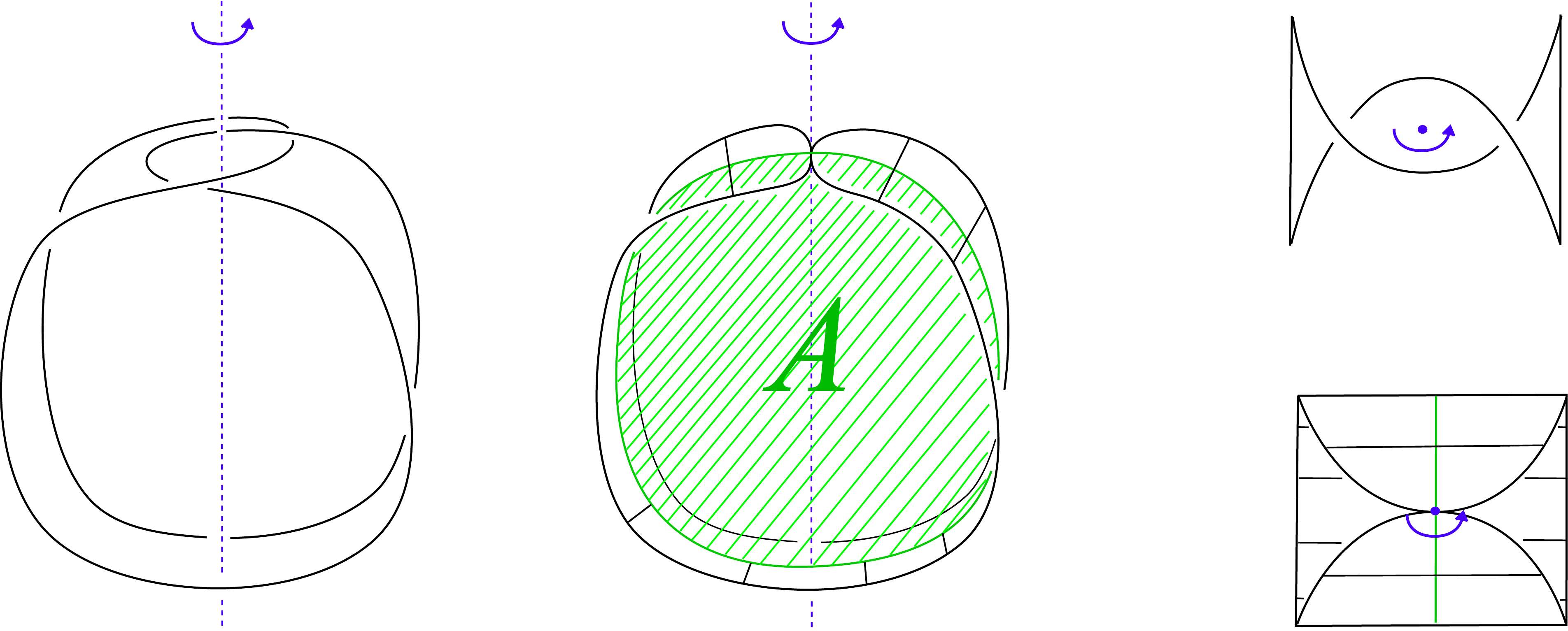}}
        \caption{Left: The Whitehead double of the unknot in $S^3$ is the boundary of $\Delta$. Center: The clean accessory disk $A$ for the self-intersection $p$ of $\Delta$ which corresponds to the clasp singularity. Both $\Delta$ and $A$ have $180$ degree rotational symmetry (top views of left and center on upper and lower right).}
        \label{fig:accessory-disk-rotation-1}
\end{figure}
\end{proof}
By Corollary~\ref{cor:w-disk-independence-of-arcs-and-pairings} we can compute $f_\sx=f_\cW$ by $\cW\in\textsf{C}_{\textrm{W-disks}}$ whose Whitney disks pairing self-intersections with trivial group elements are formed from banding together accessory disks as above.
So in combination with Lemma~\ref{lem:choice-of-sheets-non-trivial-not-in-T} we have:
\begin{cor}\label{cor:choice-of-sheets-non-involution}
If choices of sheets $\sx,\sx'$ only differ at self-intersections $p_i$ whose double point loops don't have order~$2$, then
$f_\sx=f_{\sx'}\in \cR^G_{[f]}$.
\end{cor}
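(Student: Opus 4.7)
The plan is to factor the sheet change $\sx \to \sx'$ through an intermediate sheet choice that separates the trivial switches from the non-involution switches, then invoke the two already-established case results.

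First I would partition the set $I$ of indices where $\sx, \sx'$ differ into $I_{\mathrm{tr}} := \{i \in I : g(x_i) = 1\}$ and $I_{\neq} := \{i \in I : g(x_i)^2 \neq 1\}$; by hypothesis $I = I_{\mathrm{tr}} \sqcup I_{\neq}$. Next, define the intermediate choice $\sx''$ to agree with $\sx'$ on $I_{\mathrm{tr}}$ and with $\sx$ elsewhere. The key verification is that $\sx''$ still lies in $\textsf{C}_{\textrm{sheets}}$: each switch $x_i \mapsto y_i$ at a trivial $p_i$ replaces $\epsilon_i \cdot 1$ by $\epsilon_i \cdot 1^{-1} = \epsilon_i \cdot 1$ in the sum $(*)$, leaving that sum unchanged. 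Since $\sx''$ and $\sx'$ then differ only at indices in $I_{\neq}$, Lemma~\ref{lem:choice-of-sheets-non-trivial-not-in-T} immediately yields $f_{\sx''} = f_{\sx'}$.

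It remains to prove $f_\sx = f_{\sx''}$ when these two choices differ only at trivial self-intersections. By induction on $|I_{\mathrm{tr}}|$, this reduces to the case of a single switch at one trivial $p_i$. Since the coefficient of $1 \in \pi_1 M$ in the sum $(*)$ must vanish, I can find a pairing compatible with $\sx$ in which $p_i$ is paired with another trivial self-intersection $p_j$, and the same pairing (with the role of $x_i$ replaced by $y_i$) is compatible with $\sx''$. Following the construction in the paragraph preceding the corollary statement, clean Whitney disks for $p_i, p_j$ are built by banding together the clean accessory disks $A_i, A_j$ in one of two ways, where the two bandings realize the two sheet choices at $p_i$. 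Applying Lemma~\ref{lem:rotate-B4} to a $B^4$-neighborhood of one of the accessory disks produces an ambient isotopy carrying one banding to the other. Choosing $\cW$ for $\sx$ and $\cW''$ for $\sx''$ that agree away from this pair (using Lemma~\ref{lem:disks-exist-for-all-choices} together with Remark~\ref{rem:partial-collection} to extend), the two collections become ambient isotopic, and Corollary~\ref{cor:w-disk-independence-of-arcs-and-pairings} delivers $f_\sx = f_\cW = f_{\cW''} = f_{\sx''}$.

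The only real obstacle is bookkeeping: one must check that $\sx''$ satisfies $(*)$, that a pairing partner for a trivial $p_i$ can always be chosen trivial, and that the inductive step is organized around single switches. All the substantive geometric content sits in Lemmas~\ref{lem:choice-of-sheets-non-trivial-not-in-T} and~\ref{lem:rotate-B4}, both already in hand.
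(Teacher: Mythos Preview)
Your proof is correct and follows essentially the same approach as the paper: separate the switches into those at trivial self-intersections (handled via the banded-accessory-disk construction and the rotation of Lemma~\ref{lem:rotate-B4}) and those at self-intersections with $g_i^2\neq 1$ (handled by Lemma~\ref{lem:choice-of-sheets-non-trivial-not-in-T}), with Corollary~\ref{cor:w-disk-independence-of-arcs-and-pairings} justifying the particular Whitney-disk realizations. The paper compresses this into a single sentence, while you make the intermediate sheet choice $\sx''$ and the induction on $|I_{\mathrm{tr}}|$ explicit.
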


This result completes the proof of Gabai's LBT. To prove our main Theorem~\ref{thm:4d-light-bulb} it remains to understand the $\sx$-dependence of $f_\sx$ in the presence of self-intersections with group elements of order~$2$. In the subsequent Section~\ref{sec:FQ} and Section~\ref{sec:proof-main} we will show that it is completely controlled by the Freedman--Quinn invariant.

\section{The Freedman--Quinn invariant}\label{sec:FQ}
In Section~\ref{sec:3-in-6} we review some relevant aspects of the intersection form on $\pi_3$ of a 6--manifold. In Section~\ref{sec:homotopies} the Freedman--Quinn invariant is defined using the self-intersection invariant applied to the track of a homotopy between spheres in $M^4\times \R$, which is a map of a 3--manifold to a 6--manifold rel boundary.

\subsection{3--manifolds in 6--manifolds}\label{sec:3-in-6}

Recall that for a smooth oriented 6--manifold $P^6$, the intersection and self-intersection invariants give maps
\[
\lambda_3: \pi_3P \times \pi_3P \to  \Z\pi_1P \quad \text{ and } \quad \mu_3: \pi_3P  \to  \Z\pi_1P / \langle  g+g^{-1}, 1 \rangle.
\]
The intersection invariant $\lambda_3$ can be computed geometrically by representing the two homotopy classes by transverse based maps $S^3 \to P$ and counting their intersection points with signs and group elements. Similarly, for the self-intersection invariant $\mu_3$ one represents the homotopy class by a generic based map $a:S^3\imra P$ and counts transverse self-intersections, again with signs and group elements: 
\[
\mu_3(a):= [\sum_p \e_p \cdot  g_p] \in \Z\pi_1P / \langle  g+g^{-1}, 1 \rangle
\]
We note that in this dimension, switching the choice of sheets at a double point $p$ changes $g_p\in \pi_1P$ to $g_p^{-1}$ (as in dimension 4) but the signs change from $\e_p$ to $-\e_p$, explaining the relation $g+g^{-1} =0$ in the range of $\mu_3$ (as a opposed to $g-g^{-1} =0$ in the range of $\mu_2$ in dimension 4). The relation $1=0$ is important to make $\mu_3(a)$ only depend on the homotopy class of $a$ since a cusp homotopy introduces a double point with arbitrary sign and trivial group element (as in dimension 4). In this dimension we will not be using the ``tilde'' notation for this reduced self-intersection invariant in the interest of streamlining notation, and the relation $1=0$ will always be assumed in the target of $\mu_3$. Changing the whisker for $a$ changes $\mu_3(a)$ by a conjugation with the corresponding group element. 
The homotopy invariance of $\mu_3$ follows from the fact that a generic homotopy is isotopic to a sequence of cusps, finger moves and Whitney moves, none of which changes the invariant.

Using the involution $\bar g:= g^{-1}$ on $\Z\pi_1P$, the ``quadratic form'' $(\lambda_3, \mu_3)$ satisfies the  formulas
\begin{equation}\tag{$**$}
\mu_3(a+b)= \mu_3(a) + \mu_3(b) +[\lambda_3(a,b)]  \quad\text{ and } \quad\lambda_3(a,a) = \mu_3(a) -\overline{\mu_3(a)} \in \Z\pi_1P
\end{equation}
where the second formula has no content for the coefficient at the trivial element in $\pi_1P$: Since $\lambda_3$ is skew-hermitian, it vanishes on the left hand side, whereas it's automatically zero on the right hand side that is defined by picking a representative of $\mu(a)\in\Z\pi_1P$ and then applying the involution to that specific choice.

The case $N=M\times \R$ of the following lemma describes the homomorphism used in Theorem~\ref{thm:4d-light-bulb} and will be used in the definition of the Freedman--Quinn invariant given in Section~\ref{sec:homotopies}.
Recall that $T_N$ denotes the 2-torsion in $\pi_1N$.
\begin{lem}\label{lem:Wall}
If $P^6 = N^5 \times I$, then $\mu_3:\pi_3N \to \F_2T_N\leq  \Z\pi_1N / \langle  g+g^{-1}, 1 \rangle$ is a homomorphism.
\end{lem}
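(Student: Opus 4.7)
The plan is to reduce both assertions---that $\mu_3$ is additive, and that its image lies in $\F_2 T_N$---to the single geometric fact that the intersection pairing $\lambda_3$ vanishes identically on $\pi_3(N\times I)$.

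First I would verify that $\lambda_3\equiv 0$. Given $a,b\in\pi_3 P$, represent them by generic maps $a,b\colon S^3\imra P = N\times I$ and then use an ambient isotopy along the $I$-factor to push $a$ into $N\times [0,1/3]$ and $b$ into $N\times [2/3,1]$. The perturbed maps are disjoint, so $\lambda_3(a,b)=0\in\Z\pi_1 N$. The same argument, applied to $a$ together with a slightly $I$-translated parallel copy of itself, yields $\lambda_3(a,a)=0$ as well. The first identity of $(**)$ then gives
\[
\mu_3(a+b)=\mu_3(a)+\mu_3(b)+[\lambda_3(a,b)] = \mu_3(a)+\mu_3(b),
\]
proving additivity.

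For the image statement I would analyze the target $Q := \Z\pi_1 N/\langle g+g^{-1},\,1\rangle$ with its induced involution $\overline{\,\cdot\,}$. The second identity of $(**)$ together with $\lambda_3(a,a)=0$ yields $\mu_3(a)=\overline{\mu_3(a)}$ in $Q$. Now $Q$ splits, as an abelian group, into summands indexed by the orbits of $g\mapsto g^{-1}$ on $\pi_1 N\smallsetminus\{1\}$: each free orbit $\{g,g^{-1}\}$ (with $g^2\neq 1$) contributes a $\Z$ spanned by $[g]$, on which $\overline{\,\cdot\,}$ acts by $-1$ since $[g^{-1}]=-[g]$; each fixed point $g\in T_N$ contributes an $\F_2$ (since the relation becomes $2g=0$), on which $\overline{\,\cdot\,}$ is trivial. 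Hence the fixed submodule of $Q$ under $\overline{\,\cdot\,}$ is exactly $\F_2 T_N$, and the equation $\mu_3(a)=\overline{\mu_3(a)}$ forces $\mu_3(a)\in \F_2 T_N$.

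The argument is essentially formal once the push-apart trick is in hand. No genuine obstacle arises: the only geometric input is that the extra $I$-factor in $P=N\times I$ lets us separate any pair of maps $S^3\imra P$, and everything else follows mechanically from the quadratic relations $(**)$ and the decomposition of $Q$ under $\overline{\,\cdot\,}$.
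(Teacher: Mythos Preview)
Your proof is correct and follows essentially the same route as the paper's: both arguments use the $I$-factor to separate representatives and force $\lambda_3\equiv 0$, then read off additivity from the first identity in $(**)$ and the image constraint from the second. Your decomposition of $Q$ into $\Z$- and $\F_2$-summands is just a more explicit unpacking of the paper's one-line observation that $\F_2T_N$ is exactly the fixed set of the involution on $Q$.
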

\begin{proof}
First note that the intersection pairing $\lambda_3$ vanishes identically, since one can represent $a,b\in \pi_3(N \times I)$ disjointly (and hence transversely without intersections) in $N \times 0$ respectively $N \times 1$.
So from the second formula in $(**)$ above, together with the observation that $\F_2T_N \leq \Z\pi_1N/ \langle g+g^{-1} , 1 \rangle$ is the subgroup generated by $\{ \zeta\in \Z\pi_1N   \mid \zeta = \bar \zeta \neq 1\}$, we see that $\mu_3(a)$ lies exactly in 
$\F_2T_N$. And from the first formula in $(**)$ it follows that $\mu_3:\pi_3N \to \F_2T_N$ is a homomorphism.
\end{proof}

The next lemma will be used in the proof of Corollary~\ref{cor:infinitely-many} given in Section~\ref{sec:infinitely-many}.
\begin{lem}\label{lem:Hurewicz}
$\mu_3$ factors through the Hurewicz homomorphism $\pi_3P\twoheadrightarrow H_3 \widetilde P$. 
\end{lem}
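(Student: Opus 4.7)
\textbf{Proof plan for Lemma~\ref{lem:Hurewicz}.} The strategy is to reinterpret each coefficient of $\mu_3(a)$ as a homological intersection number in the universal cover $\widetilde P$, and then invoke the fact that such intersection numbers only depend on the homology classes of the two cycles.

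First, I would fix a whisker for $a:S^3\imra P$ and let $\tilde a :S^3 \imra \widetilde P$ be the corresponding lift. The full preimage of $a(S^3)$ in $\widetilde P$ is the $\pi_1P$-orbit $\bigcup_{g\in\pi_1P}\, g\cdot \tilde a(S^3)$, and under this preimage, each transverse double point $p$ of $a$ with group element $g$ (for a choice of sheets lifting one sheet to $\tilde a$) corresponds bijectively to a transverse intersection point of $\tilde a$ with $g\cdot\tilde a$ in $\widetilde P^6$. Switching sheets at $p$ replaces the pair $(\tilde a, g\tilde a)$ by $(g^{-1}\tilde a,\tilde a)$, which after translating by $g$ gives the same intersection set of $\tilde a$ with $g^{-1}\tilde a$; the ordering of sheets in dimension six also flips the orientation sign, realizing the target relation $\epsilon\cdot g = -\epsilon\cdot g^{-1}$, i.e.\ $g+g^{-1}=0$. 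Hence, for every $g\in\pi_1P$ with $g\neq 1$, the coefficient of $g$ in $\mu_3(a)$ equals, up to the relation $g+g^{-1}=0$, the algebraic count
\[
I_g(\tilde a)\;:=\;\tilde a \;\cdot\; (g\cdot \tilde a)\;\in\;\Z
\]
of transverse intersections in $\widetilde P$. The $g=1$ coefficient plays no role since $1=0$ in the target.

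Next, I would apply the standard fact from intersection theory: for two maps $c,c':S^3\imra \widetilde P^6$ with compact image, made transverse after a small perturbation, the signed intersection count $c\cdot c'$ depends only on the (compactly supported) homology classes $[c],[c']\in H_3\widetilde P$; this follows from transversality and the bordism invariance of intersections in the middle dimension of an oriented $6$-manifold. Applied to $c=\tilde a$ and $c'=g\cdot\tilde a$, and noting that $g_*:H_3\widetilde P \to H_3\widetilde P$ is determined by the deck action, the quantity $I_g(\tilde a)$ depends only on $[\tilde a]\in H_3\widetilde P$. Therefore the entire sum $\mu_3(a)=\sum_g I_g(\tilde a)\cdot g$, viewed in $\Z\pi_1P/\langle g+g^{-1},1\rangle$, depends only on $[\tilde a]=h([a])$, which is the desired factorization through the Hurewicz map.

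The main subtlety I expect is the order-$2$ case: when $g^2=1$ with $g\neq 1$, the relation $g+g^{-1}=0$ forces $2g=0$, so the coefficient lives in $\F_2$. Here the involution $\sigma:\tilde a\cap g\tilde a\to\tilde a\cap g\tilde a$, $\tilde q\mapsto g\tilde q$, interchanges the ``two sheets'' at each self-intersection of $a$ and reverses the local orientation; one must check that the $\F_2$-reduction of the signed count $I_g(\tilde a)$ is well defined and agrees with the coefficient of $g$ in $\mu_3(a)$. This reduces to verifying that the geometric identification between self-intersections of $a$ over $g$ and $\sigma$-equivalence classes in $\tilde a\cap g\tilde a$ is sign-consistent, which is routine once the conventions above are set, and uses no information beyond the homology class of $\tilde a$.
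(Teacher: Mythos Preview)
Your approach has a genuine gap at the order-$2$ case, and it is not a routine convention check. You correctly observe that for $g^2=1$, $g\neq 1$, the involution $\sigma:\tilde q\mapsto g\tilde q$ on $\tilde a\cap g\tilde a$ is fixed-point-free and reverses the local intersection sign. But this forces the \emph{signed} count $I_g(\tilde a)=\tilde a\cdot(g\tilde a)$ to vanish identically, so its $\F_2$-reduction is always zero. The coefficient of $g$ in $\mu_3(a)$, however, is the number of $\sigma$-orbits taken mod~$2$, i.e.\ $\tfrac12|\tilde a\cap g\tilde a|\bmod 2$, and this can be nonzero. So what you are actually computing for each $g$ is the $g$-coefficient of $\lambda_3(a,a)$, and the formula $\lambda_3(a,a)=\mu_3(a)-\overline{\mu_3(a)}$ shows that this carries no information at order-$2$ elements. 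This is exactly the point made in Remark~\ref{rem:homological}: $\lambda_3(a,a)$ is homological but does \emph{not} determine $\mu_3(a)$ at $2$-torsion, so one cannot deduce the factorization from intersection numbers alone. The quantity $\tfrac12|\tilde a\cap g\tilde a|\bmod 2$ is an unsigned (mod~$4$) count and there is no standard homological intersection theory that makes it depend only on $[\tilde a]\in H_3\widetilde P$.

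The paper's proof bypasses this obstacle by working on the kernel side of the Hurewicz map rather than the target side. Whitehead's exact sequence $\Gamma(\pi_2P)\to\pi_3P\twoheadrightarrow H_3\widetilde P$ identifies that kernel with the image of the Hopf map $\eta:\pi_2P\to\pi_3P$, so it suffices to show $\mu_3(a_3+\eta(a_2))=\mu_3(a_3)$. This follows from the quadratic formula $(**)$ together with two geometric observations in a $6$-manifold: $\eta(a_2)$ can be represented inside a simply-connected normal disk bundle of an embedded $2$-sphere (so $\mu_3(\eta(a_2))=0$), and any representative of $a_3$ can be pushed off that $2$-sphere (so $\lambda_3(a_3,\eta(a_2))=0$). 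This argument never needs to express the order-$2$ coefficients of $\mu_3$ homologically.
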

 
\begin{proof}
We will use Whitehead's exact sequence $\Gamma(\pi_2P) \to \pi_3P\twoheadrightarrow H_3 \widetilde P$ from \cite{Wh}, 
where the first map is induced by the quadratic map $\eta: \pi_2P \to \pi_3P$ which is pre-composition by the Hopf map $h:S^3\to S^2$. The second map is surjective by Hurewicz's theorem applied to the 1-connected space $\widetilde P$. 
We need to show that 
\[
\mu_3(a_3+ \eta(a_2))= \mu_3(a_3) \ \forall \ a_i\in\pi_iP,
\]
By the quadratic property of $\mu_3$ given by the first formula in $(**)$, we get
\[
\mu_3(a_3+ \eta(a_2))= \mu_3(a_3) + \mu_3(\eta(a_2)) + \lambda_3(a_3,\eta(a_2))
\]
and so we want to show that the last two terms on the right vanish. Representing $a_2$ by an embedding $b_2:S^2 \hra P^6$, we see that $\eta(a_2)=b_2 \circ h$ is supported in the image of $b_2$. As a consequence of working in a 6--manifold, we can find a representative of $a_3$ in the complement of this 2--manifold and hence their intersection invariant $\lambda_3$ vanishes. Similarly, there is a generic representative of $\eta(a_2)$ which has support in the normal bundle of $b_2$, a simply-connected 6--manifold. Therefore, $\mu_3(\eta(a_2))=0$ since the trivial group element is divided out in the range of $\mu_3$.
\end{proof}

\begin{rem}\label{rem:homological}
Even though we obtain a map $\mu_3: H_3 \widetilde P\to \Z\pi_1P / \langle  g+g^{-1}, 1 \rangle$, it is not clear to us whether $\mu_3$ can be computed in a ``homological way'', i.e.\ without representing homology classes by generic maps and counting double points. This can be done for $\lambda_3$ but the second formula in $(**)$ shows that $\lambda_3(a,a)$ does {\em not} determine $\mu_3(a)$ at group elements of order~$2$.
\end{rem}

\subsection{The self-intersection invariant for homotopies of 2--spheres in 5--manifolds}\label{sec:homotopies}
The above description of $\mu_3$ can also be applied to define self-intersection invariants of properly immersed simply-connected $3$--manifolds in a $6$--manifold. In this setting $\mu_3$ is computed just as above, by summing signed double point group elements, and is invariant under homotopies that restrict to isotopies on the boundary. 

Now fix a smooth oriented 5--manifold $N$. For any homotopy $H:S^2 \times I \to N^5$ between embedded spheres in $N$ we define the self-intersection invariant of $H$
\[
\mu_3(H)\in\Z\pi_1N / \langle  g+g^{-1}, 1 \rangle
\]
to be the self-intersection invariant $\mu_3$ of a generic track $S^2 \times I \imra N^5 \times  I$ for $H$ (with fixed boundary and based at the sphere $H_0$). The invariant $\mu_3(H)$ is independent of the choice of generic track since any two choices of perturbations to make $S^2 \times I \imra N^5 \times  I$ generic differ at most by a homotopy rel boundary.

Definition~\ref{def:fq} of the Freedman--Quinn invariant below involves the case where $N^5=M^4 \times \R$ and $H_0, H_1$ are embeddings $S^2\hra M \times 0$. In this case one has that $\mu_3(H)\in \F_2 T_M$, as in Lemma~\ref{lem:Wall}.
The next lemma characterizes the dependence of $\mu_3(H)$ on the choice of homotopy $H$ only in this case.
\begin{lem}\label{lem:self-homotopies}
If $J:S^2 \times I \imra M \times \R \times I$ is a generic track of a based self-homotopy of $R:S^2\hra M \times 0$,
then $\mu_3(J) \in \F_2 T_M$ lies in the image of the homomorphism $\mu_3: \pi_3M \to \F_2T_M$.
\end{lem}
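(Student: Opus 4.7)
The plan is to produce an element $a \in \pi_3 M$ with $\mu_3(a) = \mu_3(J)$, constructed canonically from the based self-homotopy $H \colon S^2 \times I \to M \times \R$ whose generic track is $J$.

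First I would close up $H$ by identifying the two copies of $R$ at $S^2 \times \{0,1\}$ to form $\tilde H \colon S^2 \times S^1 \to M \times \R$. Since $H$ is based, $\tilde H$ sends $\{*\} \times S^1$ to $z$ and descends to $X := (S^2 \times S^1)/(\{*\} \times S^1)$. A direct cellular calculation identifies $X \simeq S^2 \vee S^3$: the attaching map of the unique $3$-cell of $X$ is the image of the Whitehead product $[\iota_{S^2}, \iota_{S^1}]$ under the projection $S^2 \vee S^1 \twoheadrightarrow S^2$, which is null by naturality of Whitehead products. The $S^3$-component of the resulting map $S^2 \vee S^3 \to M \times \R$ defines $a \in \pi_3(M \times \R) = \pi_3 M$, while the $S^2$-component is $R$ itself.

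Next I would check that $\mu_3$ assembles into a well-defined homomorphism $\pi_1(\Map_*(S^2, M \times \R), R) \to \F_2 T_M$: additivity is the stacking of tracks under concatenation (which introduces no cross intersections, since the two tracks occupy disjoint time slabs in $M \times \R \times I$), and well-definedness on based-homotopy classes follows from bordism invariance of $\mu_3$ applied to the generic track of a $2$-parameter family of based self-homotopies, a $4$-manifold immersion into the $7$-manifold $M \times \R \times I \times I$. The arcs of the $1$-dimensional double-point locus carry constant sign and group element along their length, so paired endpoints cancel in $\F_2 T_M$.

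Finally I would realize the class $a$ by an explicit normal form $H_a$ whose track has $\mu_3(H_a) = \mu_3(a)$ by direct computation. Take a generic based map $A \colon S^3 \to M \times \R$ with $S^3 = (S^2 \times I)/(S^2 \times \partial I \cup \{*\} \times I)$, representing $a$. Construct $H_a$ by inserting a ``finger-and-bubble'' into the constant self-homotopy $c_R$: extend a tube from a small disk of $R$ upward into the slice $M \times \{1\}$ (away from $R(S^2) \subset M \times \{0\}$) and carry at its tip a parameterized $2$-sphere bubble tracing out the $s$-slices of $A$ as the self-homotopy parameter sweeps across a subinterval $[s_0, s_1] \subset I$. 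The $s$-coordinate of the self-homotopy plays the role of the stabilizing $\R$-factor in the definition of $\mu_3(a) = \mu_3(A_{st})$ for the stabilization $A_{st}(x,s) = (A(x,s), s)$, so the double points of the bubble's track in $M \times \R \times I$ coincide precisely with those of $A_{st}$, while the finger and the constant $R$-part of the track are embedded and mutually disjoint from the bubble. Hence $\mu_3(H_a) = \mu_3(a)$, and since $[H_a] = [H]$ corresponds to $a$ under our identification, the homomorphism of the previous step agrees on $a$ with the standard $\mu_3 \colon \pi_3 M \to \F_2 T_M$. We conclude $\mu_3(J) = \mu_3(H) = \mu_3(a) \in \mu_3(\pi_3 M)$.

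The main obstacle I expect is arranging the finger-and-bubble normal form sufficiently cleanly so that no spurious self-intersections of the track arise beyond those of the stabilized $A_{st}$. This is handled by placing the bubble entirely in the slice $M \times \{1\}$ (keeping it disjoint from $R \subset M \times \{0\}$) and tapering the finger smoothly to vanish at both endpoints $s_0$ and $s_1$, yielding an embedded $3$-manifold in $M \times \R \times I$ whose only double points are the stabilized double points of $A$.
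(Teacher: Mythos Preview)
Your approach is correct in outline but takes a more elaborate route than the paper's. Both proofs identify a class in $\pi_3 M$ (your $a$, the paper's $b$) and show that $\mu_3(J)$ equals $\mu_3$ of that class; the difference lies in how this equality is established.

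The paper proceeds directly: since $J$ and the constant track $R\times I$ agree on the 2-skeleton $S^2\times\{0,1\}\cup z_0\times I$ of $S^2\times I$, they differ only on the single 3-cell. Writing $B$ for the restriction of $J$ to that 3-cell, one glues $B$ to $R(D^2)\times I$ along their common 2-sphere boundary to obtain $b:S^3\to M\times\R\times I$. The self-intersections of $b$ then split into two types: self-intersections of $B$, which are exactly those of $J$, and intersections $B\pitchfork (R(D^2)\times I)$, which contribute $\lambda_3(J, R\times I)$. But $\lambda_3(J, R\times I)=0$ because $R\times I$ can be pushed into $M\times\{1\}\times I$, disjoint from a homotopic copy of $J$. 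Hence $\mu_3(J)=\mu_3(b)$ in one stroke.

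You instead pass through the identification $\pi_1(\operatorname{Map}_*(S^2, M\times\R), R)\cong\pi_3 M$ and construct a normal-form representative $H_a$ of the loop class. The underlying geometric idea---separating the ``interesting'' part of the track from $R$ using the extra $\R$-factor---is the same as the paper's $\lambda_3=0$ step, but you deploy it inside the normal-form construction rather than as a direct intersection computation. Two points to tighten: the phrase ``placing the bubble entirely in the slice $M\times\{1\}$'' should be relaxed to $M\times(0,\infty)$, since constraining $A$ to a four-dimensional slice makes the double-point count ill-posed; and the assertion $[H_a]=[H]$ rests on the bijection $\pi_1(\operatorname{Map}_*, R)\cong\pi_3 M$, which you use but do not quite prove---it follows from the single relative 3-cell of $(S^2\times I,\, S^2\times\partial I\cup\{*\}\times I)$, once one checks that the constant self-homotopy $c_R$ corresponds to $0$ under your chosen splitting of $X$. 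The paper's argument buys brevity by sidestepping both the loop-space identification and the normal form entirely.
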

It follows that for any two based homotopies $H, H': S^2 \times  I\to M^4 \times \R$ between embedded spheres $H_0=H'_0$ and $H_1=H'_1$ in $M \times 0$, 
the difference $\mu_3(H) - \mu_3(H')\in \F_2 T_M$ lies in the image of $\mu_3: \pi_3M \to \F_2T_M$, since
stacking the two homotopies gives a based self-homotopy $J=H\cup -H'$ such that $\mu_3(J) = \mu_3(H) - \mu_3(H')$.
\begin{proof}
By assumption, $J$ agrees with the track $R \times I$ of the product self-homotopy on the 2-skeleton $S^2 \times \{0,1\} \cup z_0 \times I$ of $S^2 \times I$. So they only differ on the 3-cell where $R\times I$ is represented by $R(D^2)\times I$ (here $D^2$ is the complement in $S^2$ of a small disk around $z_0$) and
$J$ is represented by a generic $3$--ball $B:D^3\imra (M\times \R \times I) \smallsetminus \nu(z \times I)$.
Here $z$ denotes the image of the basepoint $z_0\in S^2$, and by construction the boundaries of these $3$--balls are parallel copies of an embedded 2--sphere in the boundary of a small neighborhood of $R \times \{0,1\}\cup(z \times I)$.
Gluing $B$ and $R(D^2)\times I$ together along a small cylinder $S^2 \times I$ between their boundaries yields a map of a $3$--sphere
$b:= B\cup R(D^2) \times I:S^3 \to M \times \R \times I$.
To prove the lemma we will show that $\mu_3(J)=\mu_3(b)\in\mu_3(\pi_3(M))$.  

First note that all contributions to $\mu_3(J)$ come from double point loops in $B$. 
There are two types of self-intersections that contribute to $\mu_3(b)$, namely the self-intersections of the immersed $3$--ball $B$ and the intersections between $B$ and the embedded $3$--ball $R(D^2)\times I$. Observe that $B\pitchfork R(D^2)\times I=J\pitchfork R\times I$, with the corresponding loops based at $z\in R\times 0$ determining the same group elements contributing to both of $\mu_3(b)$ and $\lambda_3(J,R\times I)$.   

Now note that $\lambda_3(J,R\times I)=0$, 
since $R\times I\subset M\times 0\times I$ can be made
disjoint from a homotopic (rel boundary) copy of $J$ in $M\times 1\times I$.
So $B\pitchfork R(D^2)\times I$ contributes trivially to $\mu_3(b)$, and it follows that $\mu_3(b)=\mu_3(J)$
since both are determined by double point loops in $B$.
\end{proof}

\begin{defn}\label{def:fq}
Given embeddings $R,R':S^2\hra M^4$ which are based homotopic, their Freedman--Quinn invariant is given by:
\[
\fq(R,R'):= [\mu_3(H)] \in \F_2T_M/\mu_3(\pi_3M)
\]
for any choice of based homotopy $H$ from $R\times 0$ to $R'\times 0$ in $M \times \R$.  
\end{defn}
Recall from the beginning of the proof of Lemma~\ref{lem:based} that a common dual for $R$ and $R'$ forces any given homotopy in $M$ to be based and hence $\fq(R,R')$ is defined for any pair $R,R'\in \cR^G_{[f]}$. This definition of $\fq(R,R')$ is independent of the choice of $H$ by Lemma~\ref{lem:self-homotopies}.

\subsection{Computing the Freedman--Quinn invariant}\label{subsec:compute-fq}

We show how to compute $\fq(R,R')$ as a ``difference of sheet choices'' for embedded $2$-spheres $R\times 0$ and $R'\times 0$ in $M \times \R$.
 
Consider a homotopy $H$ given by finger moves on $R$ leading to a middle level $f:S^2\imra M$, followed by Whitney moves on $f$ leading to $R'$. The collection of Whitney disks $\cW$ on $f$, inverse to the finger moves, gives $f_\cW=R$ and determines a choice of sheets 
$\sx=(x_1,\ldots,x_{2n})$, and the collection of Whitney disks $\cW'$ such that $f_{\cW'}=R'$ determines a choice of sheets 
$\sx'=(x'_1,\ldots,x'_{2n})$.

We will describe an isotopy in $M \times \R$ from $R \times 0$ to $f \times  b $, where $b:S^2 \to \R$ will be a sum of bump functions that ``resolves'' the double points in $f$. For simplicity of notation, we'll assume that $f$ is the result of just a single finger move, with $\sx=(x_1,x_2)$.

First define for each $x\in S^2$ a smooth family of non-negative bump functions $b^x_s:S^2\to \R$ which are supported in a small neighborhood of $x$ and have maximum $b^x_s(x)=s$. 
There is a homotopy $R_s$, $s\in [0,1]$, describing how the finger grows from $R$ to the self-tangency which introduces an identification of $x,y\in S^2$, where $y$ gives the ``finger tip'' $R_s(y)$ while $R_s(x)$ is fixed for all $s$. It gives an isotopy $R_s \times  b^x_s$ from $R \times 0$ to $R_1 \times  b^x_1$, with the self-tangency avoided by the bump $b^x_1$ having lifted the image of the $x$-sheet above 
 what was the tangency point (see Figure~\ref{fig:2d-finger-move-sheet-resolution} left).

\begin{figure}[ht!]
        \centerline{\includegraphics[scale=.23]{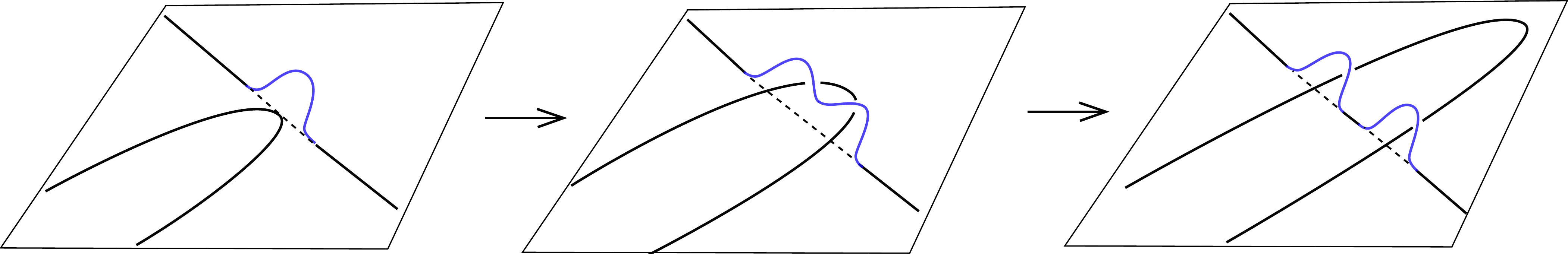}}
        \caption{A single bump splitting into two, along a finger move.}
        \label{fig:2d-finger-move-sheet-resolution}
\end{figure}
We extend this to an isotopy in $M \times \R$ from $R \times 0$ to an embedding $f \times  b $:
As $R_s$ continues to move towards $f$, the self-tangency splits into two transverse intersection points, and we arrange the single bump $b^x_1$ to split into a sum of two bumps which finally arrives at $b:=b^{x_1}_1 + b^{x_2}_1$ when the finger move is done, see Figure~\ref{fig:2d-finger-move-sheet-resolution}.

Note that in this convention, the chosen sheets $x_i\in S^2$ represent ``over-crossings'' of the embedding $f \times b$. The isotopy class of this embedding does not depend on the particulars of $b$ but only on the
choice of sheets $\sx$. In the general case of $n$ finger moves such a $b$ can be defined simultaneously to get a corresponding isotopy.

Turning the homotopy $H$ upside down, we can also consider finger moves leading from $R'$ to $f$ which are inverse to the Whitney moves along Whitney disks in $\cW'$. Apply the same procedure using the choice of sheets 
$\sx'=(x'_1,\ldots,x'_{2n})$ to get an isotopy in $M\times \R$ from $R' \times 0$ to $f \times  b'$.  
If $x_i=x_i'$ we have $b =b'$ near $x_i$, so these two isotopies can be glued together in that neighborhood. 

If $x_i\neq x_i'$ there is a local homotopy $H_i(s) := f \times (b^{x^\prime_i}_{1-s} + b^{x_i}_s)$ that moves $f \times  b'$ to locally coincide with $f \times  b$ by a ``crossing change'' (see Figure~\ref{fig:2d-sheet-resolution}). $H_i$ has a single double point where it identifies $(x_i,1/2)$ with $(x_i',1/2)$. The associated group element is $g(x_i)\in\pi_1M$ associated to the sheet choice $x_i$ of the double point $f(x_i)$.
\begin{figure}[ht!]
        \centerline{\includegraphics[scale=.23]{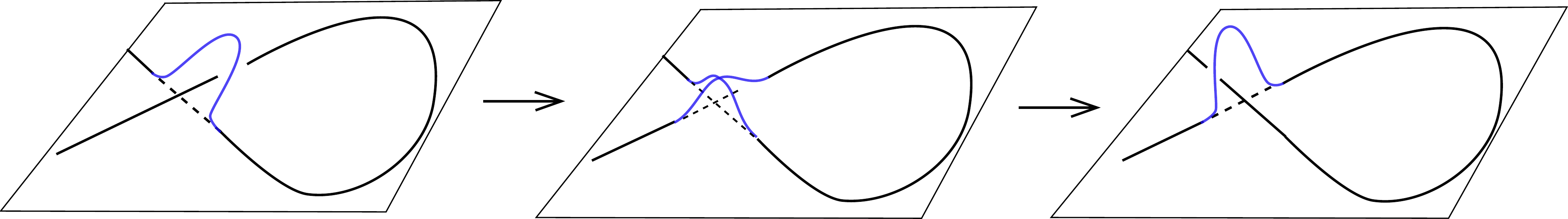}}
        \caption{Two bumps crossing in a single point during a local homotopy $H_i$.}
        \label{fig:2d-sheet-resolution}
\end{figure}

Assembling such local homotopies $H_i $ around all $x_i\neq x_i'$, and then composing with the above isotopies from $R \times 0$ to $f \times b$ and from $f \times b'$ to $R' \times 0$, yields a based homotopy $H_{\cW,\cW'}$. Its isotopy class rel boundary only depends on the sheet choices $\sx,\sx'$ and not on the particulars of the bump functions in the construction.

\begin{lem}\label{lem:compute-mu-by-crossing-changes}
$\mu_3(H_{\cW,\cW'} ) = \sum_i  g(x_i) \in \F_2T_M$,
where the sum is over those double points $p_i$ of $f$ for which $x_i\neq x_i'$. This sum is therefore a representative for $\fq(R,R')\in\F_2T_M/\mu_3(\pi_3(M))$. 
\end{lem}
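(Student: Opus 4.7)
The plan is to identify $H_{\cW,\cW'}$ as a based homotopy whose generic track in $M\times\R\times I$ has precisely one transverse double point per index $i$ with $x_i\neq x_i'$, and then read off the associated group element directly from the geometry.

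First, I would decompose $H_{\cW,\cW'}$ according to its construction in Section~\ref{subsec:compute-fq}: it is the composition of (i) the isotopy in $M\times\R$ from $R\times 0$ to $f\times b$, (ii) the local crossing homotopies $H_i=f\times(b_{1-s}^{x_i'}+b_s^{x_i})$ for each index $i$ with $x_i\neq x_i'$, and (iii) the isotopy from $f\times b'$ to $R'\times 0$. Since isotopies have embedded tracks in $M\times\R\times I$, only the pieces of type (ii) can contribute self-intersections to the track, and these pieces have pairwise disjoint support near the distinct points $x_i,x_i'\in S^2$, so their contributions to $\mu_3$ are independent and can be computed one at a time.

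Second, for a fixed index $i$ with $x_i\neq x_i'$, I would locate the single transverse double point of the track of $H_i$. At time $s=1/2$ the two bumps $b_{1/2}^{x_i}$ and $b_{1/2}^{x_i'}$ have equal height, so $(x_i,1/2)$ and $(x_i',1/2)$ both map to the same point $(p_i,1/2,1/2)\in M\times\R\times I$; away from $s=1/2$ the one bump strictly dominates the other, so this is the only intersection. The double point loop is computed by joining the two preimages by a path in $S^2\times\{1/2\}$ and taking its image, then projecting to $M$: since near $x_i,x_i'$ the map $H_{1/2}$ agrees with $f$ in the $M$-factor, this is exactly a double-point loop at $p_i$ traversing sheets $x_i$ and $y_i=x_i'$, which by the conventions of Section~\ref{sec:choices} represents $g(x_i)\in\pi_1M$.

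Third, by Lemma~\ref{lem:Wall} $\mu_3$ lands in $\F_2T_M$, so the sign of each double point is immaterial and each contributes $g(x_i)$ with $\F_2$-coefficient; hence
\[
\mu_3(H_{\cW,\cW'}) \;=\; \sum_{x_i\neq x_i'} g(x_i) \;\in\; \F_2T_M.
\]
Finally, because $H_{\cW,\cW'}$ is by construction a based homotopy from $R\times 0$ to $R'\times 0$ in $M\times\R$, Definition~\ref{def:fq} gives $\fq(R,R')=[\mu_3(H_{\cW,\cW'})]\in\F_2T_M/\mu_3(\pi_3M)$, so this sum is a representative. The main subtlety, and the only step requiring care, is verifying that the three-stage construction really does glue into a well-defined homotopy whose track is generic with exactly the claimed double-point data; this amounts to checking that the bump functions used for the isotopies in (i) and (iii) can be chosen to agree with those of the local homotopies $H_i$ on their overlap near each $x_i,x_i'$, which is implicit in the construction preceding the lemma.
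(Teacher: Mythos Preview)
Your proposal is correct and follows exactly the approach of the paper. In fact, the paper does not give a separate proof for this lemma: the construction in Section~\ref{subsec:compute-fq} preceding the statement already identifies the single double point of each local homotopy $H_i$ at $s=1/2$ with group element $g(x_i)$, and the remark after the lemma invokes Lemma~\ref{lem:Wall} to explain why the result lands in $\F_2T_M$; your write-up is simply a more explicit version of that same argument.
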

Recall from Lemma~\ref{lem:Wall} and Section~\ref{sec:homotopies} that the target
of $\mu_3(H_{\sx,\sx'} )$ is indeed the subgroup $\F_2T_M$ of $\Z\pi_1N / \langle  g+g^{-1}, 1 \rangle$, i.e.\ any $g(x_i)$ with $g(x_i)^2\neq 1$ must contribute trivially (and we don't have to worry about signs).

\subsection{Singular circles: The origin of the $\fq$ invariant} \label{sec:singular circles}
The $\fq$-invariant originally appeared in the more general setting of \cite[Chap.10.9]{FQ} as the obstruction to eliminating circles of intersections between the cores of $3$-handles in a $5$-manifold.
For the interested reader we briefly explain the connection with singular circles in our setting. The results of this section will not be used in our paper.

The singular set of a generic track $S^2\times I\imra M\times I$ of a regular homotopy from $R$ to $R'$ consists of circles which are double-covered by circles in $S^2\times I$. The group element associated to a singular circle is determined by a double point loop in the image of $S^2\times I$ that changes sheets exactly at one point on the singular circle, with a choice of first sheet orienting the loop. 
The group element $g(\gamma)$ associated to a circle $\gamma$ with connected double cover satisfies $g(\gamma)^2=1$ since $\gamma$ itself represents $g(\gamma)$ and the double cover bounds a disk in the domain. The singular arcs that appear in \cite[Chap.10.9]{FQ} and start/end at cusps, do not occur in our setting since we work with a regular homotopy.
\begin{lem}\label{lem:mu3-equals-circle-count}
$
\fq(R,R')=[\sum_\gamma g(\gamma)]\in \F_2T_M/\mu_3(\pi_3(M)),
$
where the sum is over all $\gamma$ that have connected double covers in $S^2 \times I$.
\end{lem}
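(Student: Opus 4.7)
The plan is to compute $\mu_3$ of a carefully chosen generic perturbation of the track of the regular homotopy, viewed as a map into $M\times \R \times I$. Let $h_t:S^2\to M$ be the given regular homotopy from $R$ to $R'$ with generic track $T:S^2\times I\to M\times I$, $T(z,t) = (h_t(z),t)$, whose singular set is the described union of circles. Based on $H := h \times 0: S^2\times I\to M\times \R$, the invariant $\fq(R,R')$ is represented by $\mu_3$ of any generic immersed track for $H$ in $M\times \R \times I$. I would produce one by lifting $T$ in the $\R$-direction: set $T_\eta(z,t) := (h_t(z),\,\eta(z,t),\,t)$ for a generic smooth function $\eta: S^2 \times I\to \R$ vanishing on $S^2\times \partial I$. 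Since $T$ is already an embedding off its singular set, the self-intersections of $T_\eta$ are localized in small neighborhoods of the singular circles, and the analysis reduces to one circle at a time.

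For a singular circle $\gamma$ with disconnected double cover $\tilde\gamma_1\sqcup \tilde\gamma_2\subset S^2\times I$, choose $\eta$ near these components with $\eta|_{\tilde\gamma_1}>0>\eta|_{\tilde\gamma_2}$; the two sheets are then separated in the $\R$-coordinate and $T_\eta$ is an embedding nearby. Independently, each $\tilde\gamma_i$ bounds a disk in the simply-connected $S^2\times I$, so the group element $g(\gamma)$ is trivial and any residual double point would vanish in $\mu_3$ thanks to the relation $1=0$. For a singular circle $\gamma$ with connected double cover $\tilde\gamma$, parametrize $\tilde\gamma\cong S^1$ by $\theta\in\R/2\pi\Z$ so that the deck involution is $\sigma:\theta\mapsto\theta+\pi$. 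Double points of $T_\eta$ near $\gamma$ correspond precisely to antipodal pairs of zeros of $f(\theta) := \eta(\tilde\gamma(\theta)) - \eta(\tilde\gamma(\sigma\theta))$. Since $f(\theta+\pi)=-f(\theta)$, the function changes sign across every half-period, so it has an \emph{odd} number of transverse zeros on $[0,\pi)$; each such zero yields exactly one double point of $T_\eta$ near $\gamma$.

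Next, identify the group element at any such double point. The double-point loop passes from one preimage sheet to the other by traversing one-half of $\tilde\gamma$ in $S^2\times I$, whose image in $M\times \R\times I$ is $\gamma$ traversed once; hence the associated group element is $g(\gamma)\in\pi_1M$. Because $\tilde\gamma$ bounds a disk in $S^2\times I$, the loop $2\gamma$ is null-homotopic in $M$, so $g(\gamma)^2=1$ and $g(\gamma)\in T_M\cup\{1\}$, as required to land in the subgroup $\F_2T_M$ of the target of $\mu_3$ per Lemma~\ref{lem:Wall}. In $\F_2T_M$ signs are irrelevant and contributions from multiple double points on the same connected circle add mod~$2$; the parity argument above shows each contributes the odd multiple $g(\gamma)$. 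Summing over all singular circles gives
\[
\mu_3(T_\eta) \;=\; \sum_{\gamma} g(\gamma) \;\in\; \F_2T_M,
\]
with the sum over $\gamma$ with connected double cover in $S^2\times I$. By Definition~\ref{def:fq}, $\fq(R,R') = [\mu_3(T_\eta)] = [\sum_\gamma g(\gamma)]\in \F_2T_M/\mu_3(\pi_3 M)$.

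The main obstacle is the parity count in the connected double-cover case: one must verify that a generic height function genuinely produces an odd number of sheet-matching points along $\tilde\gamma$, which rests on the antisymmetry $f(\theta+\pi)=-f(\theta)$ and the resulting forced sign change. The remaining work is essentially local -- ensuring that $\eta$ can be chosen globally generic so that the self-intersections of $T_\eta$ are transverse and confined to neighborhoods of the singular circles of $T$ -- which follows from standard jet-transversality arguments applied to the lift in the $\R$-direction.
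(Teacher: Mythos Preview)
Your approach is essentially the same as the paper's sketch: perturb the track into the extra $\R$-direction and count the surviving double points. Your parity argument via the odd function $f(\theta+\pi)=-f(\theta)$ is a nice way to make precise the paper's claim that a connected double-cover circle can be resolved to a single double point (you get an odd number, which is equivalent in $\F_2T_M$).

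One correction: your aside in the disconnected case is wrong. From ``each $\tilde\gamma_i$ bounds a disk in $S^2\times I$'' you can conclude that $[\gamma]=1\in\pi_1M$ (since $T|_{\tilde\gamma_1}:\tilde\gamma_1\to\gamma$ is a diffeomorphism and extends over the disk), but this is \emph{not} the same as $g(\gamma)=1$. The group element $g(\gamma)$ is represented by the $T$-image of a path from $x\in\tilde\gamma_1$ to $y\in\tilde\gamma_2$, and there is no reason this equals $[\gamma]$; indeed, a finger move along an arbitrary $g\in\pi_1M$ followed by its inverse Whitney move produces a singular circle with disconnected double cover and $g(\gamma)=g$. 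Fortunately your main argument (choosing $\eta$ with opposite signs on the two components to separate the sheets entirely) already disposes of the disconnected case with no residual double points, so the erroneous aside is not needed and can simply be deleted.
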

\begin{proof}[Sketch of Proof:] The idea is to resolve the singular circles of a track $H:S^2\times I\imra M\times I$ to (at worst) self-intersection points of $S^2\times I\imra  M\times \R\times I$, and compute $\mu_3$. Using the extra $\R$-factor, the singular circles with disconnected covers can be eliminated by perturbing one sheet into the $\R$-direction. By perturbing the sheets that intersect in a circle $\gamma$ with connected double cover partially into the positive $\R$-direction and partially into the negative $\R$-direction, $\gamma$ can be eliminated except for a single  transverse self-intersection with group element $g(\gamma)$. 
\end{proof}

It is interesting to note that these singular circles in $M \times I$ project to the middle level $f:S^2\times 1/2\imra M\times 1/2$ as follows: They map to the union of the boundary arcs of Whitney disks $W_i$ (inverse to finger moves on $R$) and the boundary arcs of Whitney disks $W_i'$ (guiding Whitney moves towards $R'$). These arcs meet at the self-intersections of $f$, so the union $\cup_i\partial W_i\cup\partial W'_i$ is a map of circles into $f(S^2)$. The number of circles will not in general be the number of self-intersection pairs, because the $W_i$ and $W'_i$ may induce different pairings. 

To see that these Whitney disk boundaries are projections of the singular circles to the middle level $f$, consider first the track of the $i$th finger move:
As the finger first touches the sheets and then pushes through, a single tangential self-intersection is created which then splits into two self-intersections that move apart until coming to rest at the end of the finger's motion. So in each sheet the motion of a single point splitting into two traces out one arc in the boundary of the Whitney disk $W_i$ (inverse to the finger move). In the domain $S^2\times I$ of the homotopy we see neighborhoods of two minima  of singular circles, see Figure~\ref{fig:homotopy-track}.
Turning the homotopy upside down, the same observations explain neighborhoods of the maxima.

\begin{figure}[ht!]
        \centerline{\includegraphics[scale=.28]{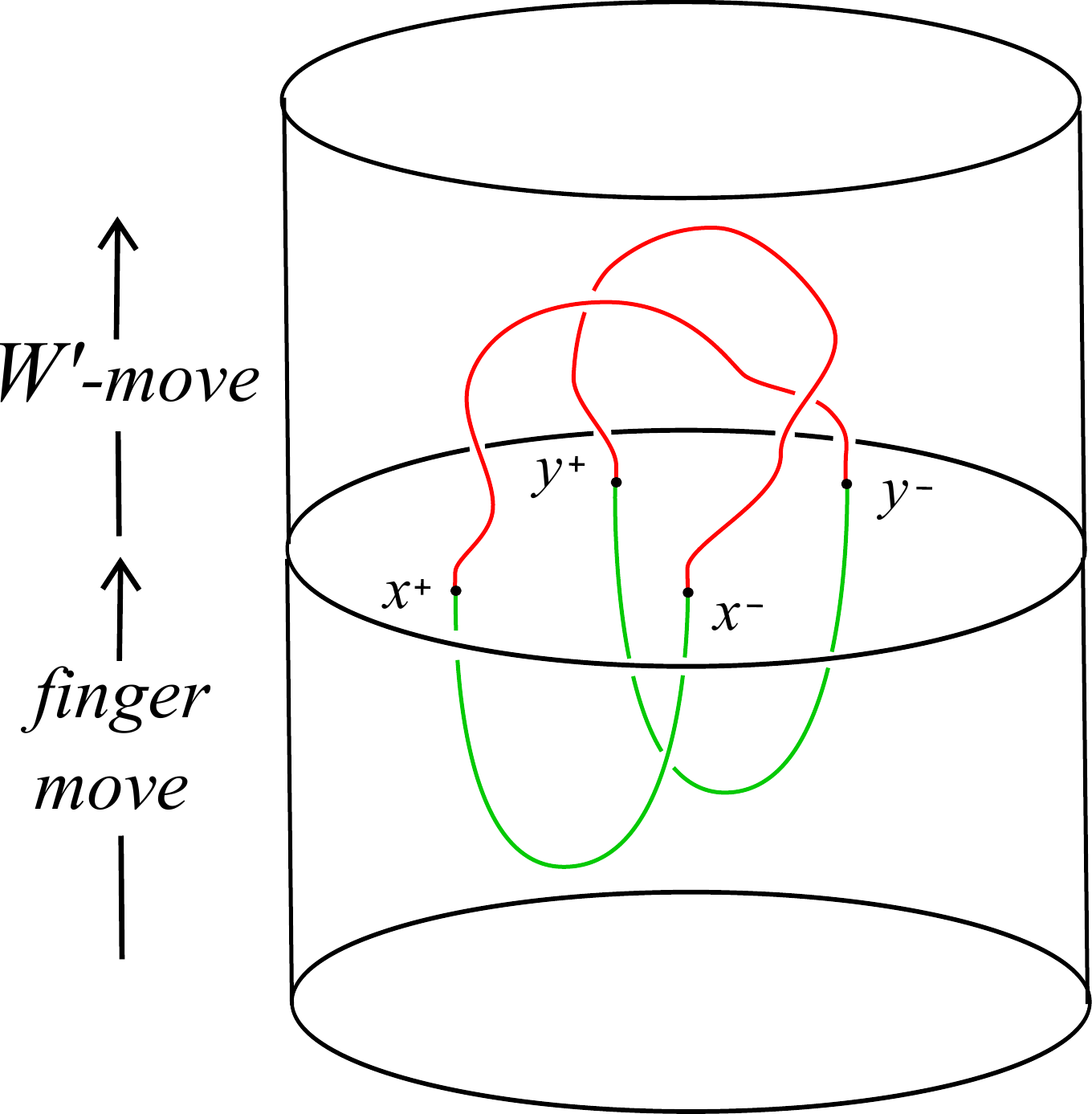}}
        \caption{Singular circles in $S^2\times I$: A connected double cover.}
        \label{fig:homotopy-track}
\end{figure}

Singular circles with connected double covers arise when there are differences in the sheet choices determined by the $W_i$ and $W'_i$ as shown in Figure~\ref{fig:homotopy-track}. This is consistent with our two computations of the Freedman--Quinn invariant in Lemmas~\ref{lem:compute-mu-by-crossing-changes} and~\ref{lem:mu3-equals-circle-count}: Each singular circle with double point loop $g$ corresponds to $n$ finger moves along the same $g$ and $n$ Whitney moves resolving the resulting double points. The number $n$ is the number of minima (and maxima) of the projection $M \times I\to I$ when restricted to the singular circle. The double cover is connected if and only if $g^2=1$ and there is an odd number of sheet changes from the sheet choice determined by the finger moves to the sheet choice of the Whitney moves.

\section{Proof of Theorem~\ref{thm:4d-light-bulb}}\label{sec:proof-main}

The last sentence of Theorem~\ref{thm:4d-light-bulb} follows from the fact that all our constructions, including throughout this section, are supported away from $G$. That $\cR^G_{[f]}\neq\emptyset$ if and only if Wall's reduced self-intersection invariant $\widetilde\mu(f)$ vanishes follows from Lemma~\ref{lem:choice-of-disks-exists}, since the vanishing of
$\widetilde\mu(f)$ is a sufficient condition for the existence of null homotopic Whitney circles for all self intersections of $f$, and is a necessary condition for $f$ to be homotopic to an embedding.
For the rest of Theorem~\ref{thm:4d-light-bulb}, we will proceed with the following steps:

\begin{enumerate}
\item [A.] Define the geometric action of $\F_2T_M$ on $ \cR^G_{[f]}$ and show that 
\[
\fq(t\cdot R,R) = [t]\in \F_2T_M/\mu_3(\pi_3M)\quad \forall R\in \cR^G_{[f]}, t\in \F_2T_M.
\] 
\item[B.] Show that the stabilizers are $\mu_3(\pi_3M)$.
\item[C.] Prove that $R'$ is isotopic to $\fq(R,R')\cdot R$ for all $R,R'\in \cR^G_{[f]}$.
\end{enumerate}
The last item implies the transitivity of the action, so these steps complete the proof of Theorem~\ref{thm:4d-light-bulb}: For a fixed $R\in\cR^G_{[f]}$ the Freedman--Quinn invariant $\fq(R,\cdot)\in\F_2T_M/\mu_3(\pi_3M)$ inverts the $\F_2T_M$-action.  
$\hfill\square$

\subsection{The geometric action on $ \cR^G_{[f]}$}\label{subsec:geo-action-def}
An outline of this construction was given in Section~\ref{sec:outline-proof}.
Given $t=t_1+\cdots +t_n\in\F_2T_M$ and $R\in \cR^G_{[f]}$, we first do $n$ finger moves on $R$, along arcs starting and ending near the base-point in $R$, representing $t_i \in T_M$. The isotopy class of the resulting generic map $f^t:S^2\imra M$ only depends on $R$ and $t$ because $\pi_1(M \smallsetminus R)\cong \pi_1M$ and homotopy implies isotopy for arcs in 4--manifolds.

The second step in the definition of our action is to do Whitney moves on $f^t$ along a collection $\cW^t$ of $n$ Whitney disks to arrive at an embedding denoted by $t\cdot R$, where $\cW^t$ satisfies the following sheet choice condition: Let $\sx=(x_1^+,x_1^-,\dots, x_n^+,x_n^-)$ be a sheet choice such that the collection $\cW$ of Whitney disks $W_i$ which are inverse to the finger moves is $\sx$-compatible and each $W_i$ pairs $f(x_i^\pm)$, i.e.~$\cW$ is also compatible with the pairing choice $\sx^\pm=(x_1^\pm,\ldots,x_n^\pm)$. Then we take $\cW^t$ to be any choice of Whitney disks that is compatible with the sheet choice $\sx^t:=(x_1^+,y_1^-,\ldots,x_n^+,y_n^-)$ which has the sheets of $f^t$ switched at each negative self-intersection $f^t(x^-_i)=f^t(y^-_i)$. Such an $\sx^t$-compatible $\cW^t$ exists by Lemma~\ref{lem:disks-exist-for-all-choices}, and by Corollary~\ref{cor:w-disk-independence-of-arcs-and-pairings} the isotopy class of $f^t_{\cW^t}$ is determined by $\sx^t$,
so $t\cdot R:=f^t_{\cW^t}\in\cR^G_{[f]}$ is well defined. 
Lemma~\ref{lem:compute-mu-by-crossing-changes} implies by construction:

\begin{lem}\label{lem:fq-tdotR-R-equals-t}
$\fq(t\cdot R, R) = [t]$ for all $R\in \cR^G_{[f]}$ and $t=t_1 +\cdots + t_n\in \F_2T_M$. $\hfill\square$
\end{lem}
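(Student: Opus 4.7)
The plan is to recognize this lemma as essentially an unpacking of Lemma~\ref{lem:compute-mu-by-crossing-changes} applied to the middle level $f^t$ together with the two Whitney disk collections $\cW$ and $\cW^t$ built in Section~\ref{subsec:geo-action-def}. Since $f^t_{\cW} = R$ and $f^t_{\cW^t} = t\cdot R$, the triple $(f^t, \cW, \cW^t)$ encodes exactly the kind of finger-moves-then-Whitney-moves homotopy $H_{\cW,\cW^t}$ from $R$ to $t\cdot R$ in $M \times \R$ which Lemma~\ref{lem:compute-mu-by-crossing-changes} was designed to compute.

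The first step is to compare the sheet choices. The collection $\cW$ was declared to be $\sx$-compatible for $\sx = (x_1^+, x_1^-, \dots, x_n^+, x_n^-)$, and by definition $\cW^t$ was chosen to be $\sx^t$-compatible for $\sx^t = (x_1^+, y_1^-, \dots, x_n^+, y_n^-)$. These two sheet choices agree at every positive self-intersection of $f^t$ and differ at every negative self-intersection $p_i^-$, so the index set in Lemma~\ref{lem:compute-mu-by-crossing-changes} is exactly $\{1,\dots,n\}$.

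The second step is to identify the group element $g(x_i^-) \in \pi_1 M$ contributed by each such negative self-intersection. By construction $p_i^\pm$ is the pair of double points created by a finger move on $R$ along an arc representing $t_i \in T_M$; the double point loop through $p_i^-$ reads off exactly this arc (up to homotopy and sheet convention), so $g(x_i^-) = t_i$. Since $t_i^2 = 1$, this value is insensitive to the preimage choice used to orient the double point loop, so the computation is well defined in $\F_2 T_M$.

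Putting these together, Lemma~\ref{lem:compute-mu-by-crossing-changes} gives
\[
\mu_3(H_{\cW,\cW^t}) \;=\; \sum_{i=1}^n t_i \;=\; t \;\in\; \F_2 T_M,
\]
whose class in $\F_2 T_M / \mu_3(\pi_3 M)$ represents $\fq(R, t\cdot R) = \fq(t\cdot R, R)$, the equality being immediate since the target is an $\F_2$-vector space and $\mu_3$ of the time-reversed track coincides with $\mu_3$ of the original. There is no real obstacle; the only point that merits explicit mention is the identification $g(x_i^-) = t_i$, which is forced by the geometry of the finger move, and the observation that $\sx$ and $\sx^t$ differ in exactly the coordinates built into the definition of the action.
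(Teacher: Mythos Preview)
Your proposal is correct and follows exactly the paper's approach: the paper simply states that the lemma ``implies by construction'' from Lemma~\ref{lem:compute-mu-by-crossing-changes}, and you have written out precisely what that phrase means by tracking the sheet choices $\sx$ and $\sx^t$ through the definition of the action in Section~\ref{subsec:geo-action-def}. Your added details---that the two sheet choices differ exactly at the $n$ negative self-intersections and that $g(x_i^-)=t_i$ by the choice of finger-move arcs---are the content of ``by construction.''
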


By Corollary~\ref{cor:choice-of-sheets-non-involution}, sheet choices $\sx$ don't affect the isotopy class of $f_\sx$ at double points whose group element is not 2-torsion. This implies that $t\cdot R$ is unchanged if we perform more finger moves on $R$ along non-2-torsion (and then appropriate Whitney moves to arrive at an embedding). In Lemma~\ref{lem:double-sheet-change} we showed that making double sheet changes doesn't change the isotopy class of $f_\sx$, so only the mod 2 number of finger moves along 2-torsion matters for the isotopy class of $t\cdot R$. This leads to the following result:

\begin{lem}\label{lem:sheets give action}
For $R\in \cR^G_{[f]}$ and $t=t_1 +\cdots + t_n\in \F_2T_M$, $t\cdot R=R'\in \cR^G_{[f]}$ for any $R'$ that is obtained from $R$ by a sequence of finger moves and Whitney moves as long as $\mu_3(H_{\cW,\cW'})=t$. $\square$
\end{lem}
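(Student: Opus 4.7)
My plan is to prove $R'=t\cdot R$ by bringing both constructions to a common enriched middle level and comparing the associated sheet choices. Let $F$ be the middle level of the given homotopy $H_{\cW,\cW'}$, so $F_\cW=R$ and $F_{\cW'}=R'$; by Corollary~\ref{cor:w-disk-independence-of-arcs-and-pairings} these depend only on the induced sheet choices, giving $R=F_\sx$ and $R'=F_{\sx'}$. First I would apply Corollary~\ref{cor:choice-of-sheets-non-involution} to modify $\sx'$ so that $\sx$ and $\sx'$ agree at every double point whose group element is not of order~$2$, without changing $R'$. Since non-$2$-torsion terms vanish in $\F_2T_M$, Lemma~\ref{lem:compute-mu-by-crossing-changes} then reads $t=\sum_{i\in S}g(x_i)\in\F_2T_M$, where $S$ is the set of $2$-torsion double points at which $\sx$ and $\sx'$ differ.

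Next I would enrich the middle level. Starting from $R$, perform $n$ additional finger moves along disjointly embedded arcs representing $t_1,\dots,t_n$, chosen disjoint from $F$, $\cW$ and $\cW'$. Together with the original finger moves producing $F$, this yields an immersion $\widetilde F$ whose self-intersections split as $D_{\mathrm{old}}\sqcup D_{\mathrm{new}}$, with $D_{\mathrm{old}}$ inherited from $F$ and $D_{\mathrm{new}}$ consisting of $n$ pairs at the $2$-torsion elements $t_i$. Define two sheet choices on $\widetilde F$: $\widetilde\sx^{R'}$ extends $\sx'$ on $D_{\mathrm{old}}$ and uses on $D_{\mathrm{new}}$ the sheet choice whose compatible Whitney disks invert the extra finger moves; and $\widetilde\sx^{tR}$ extends $\sx$ on $D_{\mathrm{old}}$ and uses the flipped sheet choice $\sx^t$ on $D_{\mathrm{new}}$ (flipping at the negative intersection of each $t$-finger pair). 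Picking clean Whitney disk collections whose supports are disjointly concentrated on the two blocks, the Whitney moves commute in order, and I would use this to verify $\widetilde F_{\widetilde\sx^{R'}}=R'$ (first undo the extra fingers, then run $\cW'$) and $\widetilde F_{\widetilde\sx^{tR}}=t\cdot R$ (first run $\cW$ to reach the $t\cdot R$-middle-level $f^t$, then do $\cW^t$).

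The final step is to establish $\widetilde F_{\widetilde\sx^{R'}}=\widetilde F_{\widetilde\sx^{tR}}$ by comparing the sheet choices directly. They differ only at $2$-torsion double points, with $\F_2T_M$-contributions summing to $t$ on $D_{\mathrm{old}}$ and to $t_1+\cdots+t_n=t$ on $D_{\mathrm{new}}$, for a total of $2t=0$. Hence for each $2$-torsion element $g$, the number of positions in $\widetilde D_g:=\{i:g(x_i)=g\}$ at which the two sheet choices disagree is even. Since $\mu(\widetilde F)=0$ forces $|\widetilde D_g^+|=|\widetilde D_g^-|$, the double-sheet-change generators $[i]+[j]\in\F_2^{\widetilde D_g}$ (with $\epsilon_i=-\epsilon_j$ and $g(x_i)=g(x_j)=g$) span the full even-weight subspace of $\F_2^{\widetilde D_g}$. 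Therefore $\widetilde\sx^{R'}$ can be transformed into $\widetilde\sx^{tR}$ by a finite sequence of double sheet changes, each of which preserves $\widetilde F_{\widetilde\sx}$ by Lemma~\ref{lem:double-sheet-change}. This yields $R'=t\cdot R\in\cR^G_{[f]}$.

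I expect the main obstacle to be the reordering step in the enrichment: rigorously justifying that the Whitney moves on $D_{\mathrm{old}}$ and $D_{\mathrm{new}}$ can be commuted since their clean Whitney disk collections have disjoint supports, and then applying Corollary~\ref{cor:w-disk-independence-of-arcs-and-pairings} to identify the outputs with $R'$ and $t\cdot R$ respectively. The $\F_2$-span analysis feeding the final double-sheet-change argument demands some bookkeeping but is routine once set up.
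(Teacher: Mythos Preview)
Your proposal is correct and follows essentially the same approach as the paper: both arguments reduce the comparison to Corollary~\ref{cor:choice-of-sheets-non-involution} for non-2-torsion double points and to Lemma~\ref{lem:double-sheet-change} for 2-torsion double points, using that only the mod~2 count of sheet disagreements at each 2-torsion element matters. The paper treats the lemma as an immediate consequence of the paragraph preceding it, whereas you spell out explicitly the common enriched middle level $\widetilde F$ and the $\F_2$-linear-algebra verification that the two sheet choices differ by a sequence of double sheet changes; this extra detail is sound and the reordering concern you flag is harmless since Corollary~\ref{cor:w-disk-independence-of-arcs-and-pairings} already guarantees that $\widetilde F_{\widetilde\sx}$ depends only on the sheet choice, not on the particular Whitney disks or their order.
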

Recall that by Lemma~\ref{lem:compute-mu-by-crossing-changes} $\mu_3(H_{\cW,\cW'})= \sum_{x_i\neq x_i'}  g(x_i)$ only depends on the middle level of the homotopy and the two sheet choices $\sx$ and $\sx'$ (and only at double points whose group elements are 2-torsion and which are counted mod 2).

\subsection{The stabilizer equals $\mu_3(\pi_3M)$}\label{subsec:stabilizer}

\begin{lem}\label{lem:stabilizer-contains-mu-pi3}
If $t\cdot R$ is isotopic to $R$, then $t\in \mu_3(\pi_3M)$, i.e.\ the stabilizer of $R\in \cR^G_{[f]}$ is contained in $\mu_3(\pi_3M)$.
\end{lem}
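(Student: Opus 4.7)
The plan is to derive this from the Freedman--Quinn invariant. By Lemma~\ref{lem:fq-tdotR-R-equals-t} we already have
\[
\fq(t\cdot R,\,R) \;=\; [t] \;\in\; \F_2T_M/\mu_3(\pi_3M),
\]
so it suffices to show that if $t\cdot R$ is isotopic to $R$, then $\fq(t\cdot R,R)=0$.

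Given such an isotopy $R_s\colon S^2\hra M$ with $R_0=R$ and $R_1=t\cdot R$, I would first turn it into a \emph{based} isotopy. Since $R$ and $t\cdot R$ share the common geometric dual $G$ (and may be assumed to agree near $G$, in particular to meet $G$ at the same point $z$), the argument in the proof of Lemma~\ref{lem:based} shows that free and based homotopies coincide in this setting. Hence the basepoint-tracking loop $s\mapsto R_s(z_0)$ can be pushed back to $z$ by composing $R_s$ with an auxiliary ambient isotopy of $M$, without introducing any self-intersections in the track. Including the resulting based isotopy in $M\times\R$ via $M\times\{0\}$ gives a based homotopy $H$ from $R\times 0$ to $(t\cdot R)\times 0$ whose track in $M\times\R\times I$ is the embedding
\[
(x,s)\ \longmapsto\ (R_s(x),\,0,\,s),
\]
with no self-intersections whatsoever. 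Hence $\mu_3(H)=0$, and by Definition~\ref{def:fq} we get $\fq(t\cdot R,R)=[\mu_3(H)]=0$. Combined with Lemma~\ref{lem:fq-tdotR-R-equals-t}, this forces $[t]=0$, i.e.\ $t\in\mu_3(\pi_3M)$.

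The proof is thus essentially formal once Lemma~\ref{lem:fq-tdotR-R-equals-t} is in hand: an isotopy in $M$ extends trivially to an embedded $3$-manifold in $M\times\R\times I$, which automatically kills $\mu_3$. I expect the only place that genuinely uses the common dual hypothesis to be the passage from a free to a based isotopy; this is the only non-trivial piece of bookkeeping and is handled by the same ideas as in Lemma~\ref{lem:based}.
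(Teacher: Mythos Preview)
Your proof is correct and essentially the same as the paper's. The paper stacks the defining homotopy $H^t$ (with $\mu_3(H^t)=t$) onto a based isotopy $H^0$ and applies Lemma~\ref{lem:self-homotopies} directly to the resulting self-homotopy, whereas you package the same two ingredients through the $\fq$-invariant via Lemma~\ref{lem:fq-tdotR-R-equals-t} and the well-definedness of $\fq$ (which is precisely Lemma~\ref{lem:self-homotopies}); in particular, your discussion of upgrading the free isotopy to a based one is the one point the paper leaves implicit.
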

\begin{proof}
The union of a based homotopy $H^t$ from $R$ to $t\cdot R$ with $\mu_3(H^t)=t$ and a based isotopy $H^0$ from 
$t\cdot R$ to $R$ forms a based self-homotopy $J:=H^t\cup H^0$ of $R$. 
So by Lemma~\ref{lem:self-homotopies}, we have $t=\mu_3(H^t)=\mu_3(J)\in\mu_3(\pi_3M)$.
\end{proof}

\begin{lem}\label{lem:stabilizer}
If $t\in \mu_3(\pi_3M)$ then $t\cdot R$ is isotopic to $R$, i.e.\ $\mu_3(\pi_3M)$ is contained in the stabilizer of any $R\in \cR^G_{[f]}$.
 \end{lem}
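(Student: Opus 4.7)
The plan is to build an auxiliary homotopy from $R$ to $t\cdot R$ whose self-intersection invariant vanishes, so that Lemma~\ref{lem:sheets give action} (applied with $0$ in place of $t$) forces $t\cdot R = 0\cdot R = R$ as isotopy classes in $\cR^G_{[f]}$. The auxiliary homotopy will be the concatenation of the defining homotopy $H^t$ of the action with a ``bubble'' self-homotopy of $R$ realizing $\eta\in\pi_3 M$ with $\mu_3(\eta) = t$.

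First I would pick $\eta\in\pi_3 M$ with $\mu_3(\eta) = t$ and represent it by a generic map $a:S^3\imra M\times\R$. The idea is to graft $a$ as a bubble into the constant self-homotopy $R\times I$ of $R\times 0\subset M\times\R$, localized in a small $3$-ball $B^3\subset S^2\times I$ chosen to avoid both the basepoint arc $\{z_0\}\times I$ and the preimage of $z=R\cap G$. Using general position in the $5$-manifold $M\times\R$, the grafted $3$-ball can be arranged disjoint from $G\times 0$, since $G\times 0$ meets $R\times 0$ only at $\{z\}\times 0$ and we can perturb into the $\R$-direction. The resulting based self-homotopy $H^\eta$ of $R$ has generic track in $M\times\R\times I$ whose only double points come from those of $a$, so $\mu_3(H^\eta) = \mu_3(\eta) = t \in \F_2 T_M$.

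Next, let $H^t$ be the defining homotopy from $R$ to $t\cdot R$ of Section~\ref{subsec:geo-action-def}, for which Lemma~\ref{lem:compute-mu-by-crossing-changes} gives $\mu_3(H^t) = t$. Stack the two in the time direction to form $H' := H^\eta \ast H^t$, a based homotopy from $R$ to $t\cdot R$. Their generic tracks occupy disjoint subintervals of $I$ and meet only along the embedded sphere $R\times 0$, which has no self-intersections, so double points add:
\[
\mu_3(H') = \mu_3(H^\eta) + \mu_3(H^t) = t + t = 0 \in \F_2 T_M.
\]

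Finally, by Section~\ref{subsec:regular-homotopy}, $H'$ is equivalent rel endpoints to a sequence of finger moves followed by Whitney moves with some middle level, carrying the same $\mu_3 = 0$. Lemma~\ref{lem:sheets give action} applied with value $0$ then yields $t\cdot R = 0\cdot R$ in $\cR^G_{[f]}$, and $0\cdot R = R$ tautologically since an empty sequence of finger moves leaves $R$ unchanged. Disjointness from $G$ is preserved by our choice of bubble location together with Lemma~\ref{lem:based}, consistent with the last sentence of Theorem~\ref{thm:4d-light-bulb}. I expect the delicate step to be the bubble construction: confirming that $\mu_3(H^\eta) = \mu_3(\eta)$ and that $H^\eta$ can genuinely be localized away from $G$. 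Both are standard consequences of the definition of $\mu_3$ via generic tracks together with a careful choice of $B^3$, after which concatenation-additivity of $\mu_3$ and Lemma~\ref{lem:sheets give action} make the rest formal.
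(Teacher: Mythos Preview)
Your approach has a genuine gap at the step where you invoke Section~\ref{subsec:regular-homotopy}. Your bubble $a:S^3\imra M\times\R$ lives in the $5$-manifold $M\times\R$, and you explicitly use general position there to avoid $G\times 0$; hence $H^\eta$, and therefore $H'=H^\eta\ast H^t$, is a homotopy in $M\times\R$, not in $M$. But Lemma~\ref{lem:sheets give action} requires a sequence of finger moves and Whitney moves \emph{in $M$} (away from $G$), with $\mu_3(H_{\cW,\cW'})$ computed from the sheet choices at the middle level via Lemma~\ref{lem:compute-mu-by-crossing-changes}. Section~\ref{subsec:regular-homotopy} decomposes homotopies in a $4$-manifold, not in $M\times\R$, so it does not convert your $H'$. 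If instead you produce some finger--Whitney homotopy $H''$ in $M\smallsetminus G$ from $R$ to $t\cdot R$ via Lemma~\ref{lem:based}, then by Lemma~\ref{lem:self-homotopies} you only know $\mu_3(H'')\equiv\mu_3(H')=0\pmod{\mu_3(\pi_3M)}$, i.e.\ $\mu_3(H'')=s$ for some $s\in\mu_3(\pi_3M)$. Lemma~\ref{lem:sheets give action} then gives $t\cdot R=s\cdot R$, which is precisely the statement you are trying to prove. The argument is circular.

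The paper breaks this circularity with a Mayer--Vietoris computation: setting $M_0:=M\smallsetminus\nu(R\cup G)$, one finds $H_3(\widetilde{M_0})\twoheadrightarrow H_3(\widetilde M)$, and since $\mu_3$ factors through Hurewicz (Lemma~\ref{lem:Hurewicz}) one may choose $a\in\pi_3M_0$ with $\mu_3(a)=t$. Representing $a$ as a regular self-homotopy $F_s$ of a trivial sphere \emph{inside $M_0$} and taking the level-wise connected sum $F_s\# R$ yields a genuine finger--Whitney self-homotopy of $R$ in $M$, disjoint from $G$, ending at $F_1\# R=R$, with $\mu_3=t$. Lemma~\ref{lem:sheets give action} then gives $R=t\cdot R$ directly --- no concatenation with $H^t$ is needed. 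The essential point you are missing is that the bubble must be realized disjoint from $R$ and $G$ \emph{in $M$ itself}; general position in $M\times\R$ does not arrange this (after projection a $3$-ball will generically meet $R$ and $G$ in $1$-manifolds), and the homological argument is what makes it possible.
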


\begin{proof}
We first use that a closed tubular neighborhood $\nu(R\cup G)$ has boundary $S^3$ and is homotopy equivalent to $S^2\vee S^2$ (in fact, capping it off with $B^4$ leads to a $S^2$-bundle over $S^2$ with Euler number $R\cdot R$). If $M_0\subset M$ is the closure of the complement of $\nu(R\cup G)$ then the corresponding Mayer-Vietoris sequence (for universal covering spaces) reads as follows:
\[
H_3(\nu(R\cup G);\Z\pi_1M) \oplus H_3(\widetilde M_0) \ra H_3(\widetilde M) \ra H_2(S^3;\Z\pi_1M)
\]
Since the first summand of the first term and the last term are both $0$, we see that the inclusion induces an epimorphism $H_3(\widetilde M_0) \sra H_3(\widetilde M)$. By the surjectivity of Hurewicz maps, this implies that we may assume that $t=\mu_3(a)$ for some $a\in \pi_3M_0$. 

Now represent $a$ by a based generic regular homotopy $F_s:S^2 \times I\to M_0$ from the trivial sphere $F_0=F_1$ in $M_0$ to itself.
By construction, $F_s$ lies in the complement of $R$ at each $s$-level, so we can take a smooth family of ambient connected sums of $F_s$ with $R\times s$
to get a homotopy $H:S^2\times I\to M$ from $R$ to itself with $\mu_3(H)=t$. 
By Lemma~\ref{lem:sheets give action}, this shows that $F_1\# R$ is an admissible representative of our action $t\cdot R$ and therefore, $t\cdot R$ is isotopic to $R$.
\end{proof}

\subsection{The action is transitive}\label{sec:transitive}
This follows directly from:
\begin{lem}\label{lem:fq-inverts-action}
For any $R,R'\in  \cR^G_{[f]}$, we have 
$\fq(R,R')\cdot R = R'.$
\end{lem}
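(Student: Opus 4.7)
The plan is to realize $R$ and $R'$ as the two Whitney-move outputs from a common middle-level immersion $f$, identify $\fq(R,R')$ concretely via the sheet-change formula of Lemma~\ref{lem:compute-mu-by-crossing-changes}, and then read off the conclusion from Lemma~\ref{lem:sheets give action}.

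First I would invoke Lemma~\ref{lem:based}: since $R, R' \in \cR^G_{[f]}$ share the geometric dual $G$, they are joined by a finite sequence of isotopies, finger moves and Whitney moves in $M\smallsetminus G$. Let $f:S^2\imra M$ be the middle-level generic immersion obtained after all finger moves. Reversing the finger moves gives a clean collection $\cW$ of Whitney disks on $f$, disjoint from $G$, with $f_\cW=R$, and the forward Whitney moves give another such collection $\cW'$ with $f_{\cW'}=R'$. Let $\sx$ and $\sx'$ denote the sheet choices at the self-intersections of $f$ determined by $\cW$ and $\cW'$.

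Next, the construction of Section~\ref{subsec:compute-fq} applied to $(f,\cW,\cW')$ produces a based homotopy $H_{\cW,\cW'}$ from $R\times 0$ to $R'\times 0$ in $M\times\R$ whose self-intersection invariant is computed by Lemma~\ref{lem:compute-mu-by-crossing-changes} as
\[
t := \mu_3(H_{\cW,\cW'}) = \sum_{x_i\neq x_i'} g(x_i) \in \F_2T_M,
\]
so that $\fq(R,R')=[t]\in \F_2T_M/\mu_3(\pi_3M)$ by Definition~\ref{def:fq}. Now Lemma~\ref{lem:sheets give action}, applied to this same data $(f,\cW,\cW')$ with this very $t$, yields $t\cdot R = R'$ as isotopy classes in $\cR^G_{[f]}$.

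Finally, the stabilizer calculation of Section~\ref{subsec:stabilizer} (Lemmas~\ref{lem:stabilizer-contains-mu-pi3} and~\ref{lem:stabilizer}) identifies $\mu_3(\pi_3M)$ as the stabilizer of any element under the geometric $\F_2T_M$-action, so this action descends to a well-defined action of $\F_2T_M/\mu_3(\pi_3M)$ on $\cR^G_{[f]}$. Hence $\fq(R,R')\cdot R = [t]\cdot R = t\cdot R = R'$. I do not anticipate a genuine obstacle here: the real content has already been packaged into Lemma~\ref{lem:based} (the existence of a $G$-avoiding middle-level homotopy) and Lemma~\ref{lem:sheets give action} (the identification of the Freedman--Quinn sheet-sum with the geometric action); the present lemma is a clean assembly of these.
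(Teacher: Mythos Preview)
Your proposal is correct and follows the same approach as the paper: the paper's own proof is a single sentence citing Lemmas~\ref{lem:compute-mu-by-crossing-changes} and~\ref{lem:sheets give action}, and your write-up simply unpacks that citation by making explicit the roles of Lemma~\ref{lem:based} (producing the middle level $f$ with $\cW,\cW'$) and the stabilizer computation (passing from $t$ to $[t]$). There is no substantive difference in strategy.
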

\begin{proof} This is a simple consequence of Lemmas~\ref{lem:compute-mu-by-crossing-changes} and~\ref{lem:sheets give action}.
\end{proof}

\section{Proofs of Corollaries~\ref{cor:infinitely-many} and \ref{cor:unknotting number}}\label{sec:infinitely-many}

Recall the statement of Corollary~\ref{cor:infinitely-many}:
There exist  $4$--manifolds $M$ and $f:S^2\imra M$ with infinitely many free isotopy classes of embedded spheres homotopic to $f$ (and with common geometric dual). These manifolds also admit infinitely many distinct pseudo-isotopy classes of self-diffeomorphisms.

\begin{proof}[Proof of Corollary~~\ref{cor:infinitely-many}:]
We first note that in the example given below Corollary~\ref{cor:infinitely-many},  $\mu_3(\pi_3M) =0$ since $M$ (and hence its universal covering $\widetilde M$) has no 3-handles, and $\mu_3$ factors through the Hurewicz homomorphism $\pi_3(M) \sra H_3(\widetilde M)=0$ by Lemma~\ref{lem:Hurewicz}. So $|\F_2T_M/\mu_3(\pi_3M)|=|\F_2T_M|=\infty$.

The pseudo-isotopy statement of Corollary~\ref{cor:infinitely-many} follows from Lemma~\ref{lem:pseudo-isotopy} below because a diffeomorphism $\varphi:M \times I \stackrel{\cong}{\to} M \times I$ with $\varphi_0=\id$ (the pseudo-isotopy condition) and $\varphi_1(R)=R'$ leads to the concordance $\varphi\circ (R\times\id):S^2 \times I\hra M \times I$ from $R$ to $R'$. This contradicts $\fq(R,R')\neq 0$ by Corollary~\ref{cor:concordance}.
\end{proof}

\begin{lem}\label{lem:pseudo-isotopy}
Let $G:S^2 \hra M$ be framed and fix $n\in\Z$. Then the diffeomorphism group of $M$ acts transitively on embedded spheres $R:S^2 \hra M$ with $G$ as a geometric dual and normal Euler number $e(\nu R)=n$.
\end{lem}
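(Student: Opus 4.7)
The plan is to construct an ambient self-diffeomorphism of $M$ carrying $R$ to $R'$ in three stages: normalize so that $R$ and $R'$ coincide on a fixed tubular neighborhood of $G$, reduce the remaining comparison to two $2$-disks in the complement of $\nu(G)$, and then use the common geometric dual in the complement to produce the required ambient extension.

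First I would fix a tubular neighborhood identification $\nu(G)\cong S^2\times D^2$ with $G = S^2\times\{0\}$. Since both $R$ and $R'$ meet $G$ transversely in a single point and $G\cong S^2$ is connected, an ambient isotopy of $M$ supported in $\nu(G)$ first moves the intersection point $R\cap G$ onto $R'\cap G=:z$, then rotates the normal $D^2$-fibres to align the tangent planes $T_zR$ and $T_zR'$. After shrinking $\nu(G)$ one can arrange $R\cap\nu(G)=R'\cap\nu(G)=\{z\}\times D^2$; composing with this ambient diffeomorphism, we may assume $R$ and $R'$ agree on $\nu(G)$.

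Setting $W := M\smallsetminus \nu^{\circ}(G)$, we then have two properly embedded $2$-disks $D := R\cap W$ and $D' := R'\cap W$ with common boundary $\gamma = \{z\}\times S^1\subset \partial W=S^2\times S^1$ and common normal framing in $W$ determined by the Euler number $n$. Pushing a nearby parallel sphere $S^2\times\{q\}\subset\partial W$ slightly into the interior of $W$ yields a sphere $G^+\subset W$ with trivial normal bundle meeting each of $D,D'$ transversely in exactly one point near $\gamma$; thus $G^+$ is a common geometric dual to $D$ and $D'$ inside $W$. The remaining task is to produce a self-diffeomorphism of $W$ that fixes $\partial W$ and sends $D$ to $D'$, after which combining with the identity on $\nu(G)$ gives a self-diffeomorphism of $M$ taking $R$ to $R'$.

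For this last step I would use that the regular neighborhoods $\nu_W(D\cup G^+)$ and $\nu_W(D'\cup G^+)$ are each diffeomorphic to the same plumbing of an $n$-framed $D^2$-bundle-over-$D^2$ with a trivial $D^2$-bundle over $S^2$. Uniqueness of tubular neighborhoods supplies an abstract diffeomorphism between them carrying $D$ to $D'$, fixing $G^+$, and agreeing with the identity on a collar of $\gamma\subset\partial W$. The common dual $G^+$ is then used to identify the complements of these two plumbings inside $W$ in a compatible way, after which the extension across the remaining $4$-dimensional piece is completed by the Alexander trick on the relevant $S^3$ boundaries. The principal obstacle is precisely this extension step: without $G^+$ the disks $D$ and $D'$ could carry subtle knotting data obstructing any ambient identification, and the role of the common dual is to provide a Norman-style trivialization that reduces the problem to the standard plumbing model.
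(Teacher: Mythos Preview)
Your plan has the right architecture---decompose $M$ along a standard neighborhood of the sphere-plus-dual and then argue that the complements match---and this is exactly what the paper does. But the step you flag as ``the principal obstacle'' really is the whole proof, and your proposal does not close it. You write that the common dual $G^+$ is ``used to identify the complements of these two plumbings inside $W$ in a compatible way, after which the extension \dots\ is completed by the Alexander trick on the relevant $S^3$ boundaries.'' Two problems: first, the Alexander trick extends a diffeomorphism of $S^3$ over $D^4$, not over an arbitrary compact $4$--manifold with $S^3$ boundary, so even if the interior boundary of your plumbing is $S^3$ you still need to know the two complements are diffeomorphic \emph{rel boundary}; second, ``Norman-style trivialization'' is a technique for modifying immersed surfaces, not for producing diffeomorphisms of $4$--manifold complements, and you give no mechanism by which $G^+$ identifies $W\smallsetminus\nu_W(D\cup G^+)$ with $W\smallsetminus\nu_W(D'\cup G^+)$.

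The paper's proof supplies precisely this missing identification, and it is worth seeing how. Rather than passing to $W$ and introducing a push-off $G^+$, one works directly with $\nu(R\cup G)\subset M$, which is the plumbing $M_n$ with $\partial M_n\cong S^3$, giving $M\cong M_n\cup_{S^3} M_R$. The key observation is that \emph{surgery on $G$} turns $M_n$ into $D^4$ (the $0$-framed $2$-handle becomes a $1$-handle cancelling the $n$-framed one), so surgery on $G$ in $M$ yields $M_R\cup_{S^3}D^4$. Since $G$ is common to $R$ and $R'$, the surgered manifold is the same in both cases, whence $M_R\cup_{S^3}D^4\cong M_{R'}\cup_{S^3}D^4$ and therefore $M_R\cong M_{R'}$. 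Now Cerf's theorem (orientation-preserving diffeomorphisms of $S^3$ are isotopic to the identity) lets you glue this to the identity on $M_n$. The surgery trick is the content you are missing; without it, nothing in your outline forces the two complements to agree.
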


\begin{proof}
Given $G,R$ as above, consider a closed regular neighborhood $\nu(R\cup G)\subset M$. It is diffeomorphic to the $4$-manifold $M_n$ with one 0-handle and two 2-handles attached to the Hopf link, one 0-framed and the other $n$-framed. In particular, the boundary $\partial M_n$ is a 3-sphere which leads to a decomposition
\[
M \cong M_n \cup_{S^3} M_R,
\]
where $M_R$ is the closure of the complement of $M_n$ in $M$. Note that $G:S^2 \hra M_n\subset M$ is the union of the (core of the) 0-framed 2-handle and a disk bounding the $0$-framed component of the Hopf link. As a consequence, surgery on $G$ in $M_n$ leads to the 4-manifold where that 0-framed 2-handle is replaced by a 1-handle. This 1-handle then cancels the $n$-framed 2-handle, showing that surgery on $G$ leads from $M_n$ to $D^4$. It follows that surgery on $G$ also leads from $M$ to $M_R\cup_{S^3}D^4$.

If $R':S^2 \hra M$ with $e(\nu R')=e(\nu R)$ also has $G$ as a geometric dual, then
repeating the same constructions for $R'$ in place of $R$, we get a second decomposition
\[
M \cong M_n \cup_{S^3} M_{R'},
\]
where $M_{R'}\cup_{S^3}D^4$ is diffeomorphic to surgery on $G$ in $M$. But $G$ is a {\em common} dual, so we get an orientation preserving diffeomorphism $M_R \cong M_{R'}$. Since orientation preserving diffeomorphisms of $S^3$ are isotopic to the identity, we can extend this to a self-diffeomorphism of $M$ which carries $R$ to $R'$ and fixes $G$: This just requires to line up the 2-handles of $M_n$ in the obvious way.
\end{proof}

Corollary~\ref{cor:unknotting number} states that
for $R,R'\in \cR^G_{[f]}$, the relative unknotting number equals the support of the Freedman-Quinn invariant: $\un(R,R') = |\fq(R,R')|$.

Here $\un(R,R')\in \N_0$ denotes the minimal number of finger moves required in any regular homotopy between $R$ and $R'$,
and $|\fq(R,R')|$ is the minimum number of non-zero coefficients in any representative of $\fq(R,R')\in \F_2T_M/\mu_3(\pi_3M)$.

\begin{proof} For $t\in\F_2T_M$, the relative unknotting number satisfies $\un(t\cdot R,R) \leq |t|$ because $t\cdot R$ is constructed from $R$ by using $|t|$ finger moves. Moreover, any $R'\in  \cR^G_{[f]}$ is isotopic to some $t\cdot R$, so it suffices to understand those particular numbers. If $[t] = [s]$ then $t\cdot R=[t]\cdot R$ is isotopic to  $s\cdot R$, so $\un(t\cdot R,R) \leq |s|$ holds as well. 

If $u:=\un(t\cdot R,R)$ then there are $u$ finger moves and then $u$ Whitney moves that lead from $R$ to $t\cdot R$. By general position, we may assume that the finger moves are disjoint from $G$ and run along group elements $g_i\in \pi_1M, i=1,\dots, u$. By Lemma~\ref{lem:choice-of-disks-exists} we find Whitney disks with the same sheet choices in the complement of $G$, and by Lemma~\ref{lem:sheets give action} they also lead to $t\cdot R$. This implies that $u$ is at least as large as the number of 2-torsion $s_j$ among the $g_i$ which by itself equals $|s|$ for $s:=\sum_j s_j$. So we get $u\geq|s|$ and together $u=|[t]|$ as claimed. 
\end{proof}

\section{Ambient Morse theory and the $\pi_1$-negligible embedding Theorem}\label{sec:ambient-morse-pi1-embedding-thm}

A third proof of Gabai's LBT arises from ambient Morse theory and the uniqueness part of the $\pi_1$-negligible embedding theorem \cite[Thm.10.5A(2)]{FQ, Stong}. We only state it in the orientable, non $s$-characteristic case that we are going to use because our dual $G$ is framed. Recall that an embedding $h:V\hra W$ is \emph{$\pi_1$-negligible} if the inclusion induces an isomorphism $\pi_1(W \smallsetminus h(V)) \cong \pi_1W$, which is guaranteed by a dual sphere.

\begin{Thm10}\label{thm:FQ-10.5-uniqueness}
Let $(V; \partial_0 V, \partial_1 V)$ be a compact $4$--manifold triad so that $\pi_1(V, \partial_0 V)=\{1\}=\pi_1(V,\partial_1V)$ (all basepoints), each component has nonempty intersection with $\partial_1 V$, and components disjoint from $\partial_0 V$ are 1-connected. 

Suppose $W$ is an oriented $4$--manifold, $h, h':(V,\partial_0 V), \hra (W,\partial W)$ are $\pi_1$-negligible embeddings, both not $s$-characteristic, and $H$ is a homotopy rel $\partial_0 V$. Then there is an obstruction $\fq(H)\in H^2 (V,\partial_0 V; \F_2T_W)$ which vanishes if and only if $H$ is homologous (with $\Z[\pi_1W]$-coefficients) to a $\pi_1$-negligible concordance $V \times I \hra W \times I$ from $h$ to $h'$.$\hfill\square$
\end{Thm10}
Stong extends this theorem to the $s$-characteristic case in \cite[p.2]{Stong} by showing that there is a secondary obstruction, the {\it Kervaire-Milnor invariant}, to finding a concordance. 

Stong also observes on the bottom of \cite[p.2]{Stong} that $\fq$ can be strengthened to be independent of $H$ by taking $\fq(h,h')$ in the quotient of $H^2 (V,\partial_0 V; \F_2T_W)$ by the self-intersection invariant on $\pi_3W$. Note that this is a 5-dimensional result so it holds in the smooth category.

We apply this theorem for $W$ defined to be the manifold $M$, with an open neighborhood of $G$ removed, and $V:= D^2 \times D^2$ with $\partial_0 V= S^1 \times D^2$ and $\partial_1 V = D^2 \times S^1$. Then $R, R'$ can be turned into embeddings $h, h': (V,\partial_0 V) \hra (W,\partial W)$ by using the normal bundles of $R,R'$ and removing a neighborhood of their intersection point with $G$. Note that $R$ may have non-trivial normal bundle (necessarily isomorphic to that of $R'$) but after removing the neighborhood of $G$, it turns into a $D^2$-bundle over $D^2$ which must be trivial.

By Lemma~\ref{lem:based}, the resulting embeddings $h, h'$ are homotopic rel $\partial_0 V$ and the theorem applies. Note that
$(V,\partial_0 V)\simeq (D^2, S^1)$ and hence the invariant $\fq(R,R')=\fq(h, h')$ lies in $\F_2T_W/\mu_3(\pi_3W) $. Note also that Seifert--van Kampen shows that in this case, every concordance is $\pi_1$-negligible (as long as it is on one boundary). 

If $\fq(h,h')=0$ then $h$ and $h'$ are concordant by the above theorem.  We now reverse the above steps of thickening spheres and disks to 4--manifolds with boundary to arrive at a concordance $C: S^2 \times I \hra M \times  I$ between $R$ and $R'$ exactly as in the corollary below. Note that Stong's additional Kervaire-Milnor invariant vanishes in our setting since $R$ is not s-characteristic: The dual sphere $G$ is framed, so that 
\[
R\cdot G \equiv 1 \neq 0 \equiv G\cdot G \mod 2.
\] 

\begin{cor}\label{cor:concordance}
Given embedded spheres $R,R'\in \cR^G_{[f]}$ as in Theorem~\ref{thm:4d-light-bulb}, the obstruction $\fq(R,R')\in \F_2T_M/\mu_3(\pi_3M)$ vanishes if and only if there is a concordance $C: S^2 \times I \hra M \times  I$ between $R$ and $R'$ that has $G$ as a geometric dual in every level: $C^{-1}(G \times \{t\}) = (z_0,t).\hfill\square$ 
\end{cor}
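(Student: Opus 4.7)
The plan is to apply the embedding Theorem 10.5(2) in the setup already laid out in the paragraphs preceding the corollary. The easy direction, that a concordance forces $\fq(R,R')=0$, proceeds by translating a given concordance $C:S^2\times I\hra M\times I$ vertically to an embedding $H:S^2\times I\hra M\times\R$, $H(x,s)=C(x,s)-(0,s)$, with $H_0=R\times 0$ and $H_1=R'\times 0$ (based by Lemma~\ref{lem:based}). The track of $H$ in $M\times\R\times I$ is then again an embedding, so $\mu_3(H)=0$ and therefore $\fq(R,R')=0$.

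For the converse, I would set $W:=M\smallsetminus\nu(G)$, take $V:=D^2\times D^2$ with $\partial_0 V:=S^1\times D^2$, and use the normal bundles of $R,R'$ (each trivial once a neighborhood of the $G$-intersection is deleted) to produce $\pi_1$-negligible embeddings $h,h':(V,\partial_0 V)\hra(W,\partial W)$. Lemma~\ref{lem:based} gives a based homotopy rel $\partial_0 V$, while Seifert--van Kampen and excision identify $\F_2 T_W/\mu_3(\pi_3 W)$ with $\F_2 T_M/\mu_3(\pi_3 M)$ and $\fq(h,h')$ with $\fq(R,R')$ under this isomorphism, since both count $2$-torsion self-intersections of a generic homotopy track in a $5$-manifold. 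Assuming $\fq(R,R')=0$, Theorem 10.5(2) produces a $\pi_1$-negligible concordance from $h$ to $h'$; Stong's Kervaire--Milnor obstruction is ruled out because $R$ is not s-characteristic, as $R\cdot G\equiv 1\not\equiv 0\equiv G\cdot G\pmod 2$ since $G$ is framed.

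To finish, I would re-glue $\nu(G)\times I$ to $W\times I$ along the boundary $\partial W\times I$. In each slice $t\in I$ the meridian $\partial_0 V\times\{t\}$ bounds a canonical meridional disk in $\nu(G)\times\{t\}$ hitting $G\times\{t\}$ transversely in a single point, and capping the concordance fiberwise with these meridian disks produces the required embedded $C:S^2\times I\hra M\times I$. By construction $C^{-1}(G\times\{t\})=(z_0,t)$, so the ``moreover'' clause falls out of the construction rather than requiring a separate argument.

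The most delicate step is the identification $\fq(h,h')=\fq(R,R')$ on which the embedding theorem's hypothesis rests. The invariant in Theorem 10.5(2) takes values in $H^2(V,\partial_0 V;\F_2 T_W)/\mu_3(\pi_3 W)$, which collapses to $\F_2 T_M/\mu_3(\pi_3 M)$ under $(V,\partial_0 V)\simeq(D^2,S^1)$ and $\pi_1 W\cong\pi_1 M$; but matching this algebraic description with the track-of-a-homotopy definition of $\fq(R,R')$ from Section~\ref{sec:FQ} requires threading the definitions carefully through both thickening steps, keeping track of whiskers and of the effect of deleting $\nu(G)$ on $\pi_3$. Once this bookkeeping is complete, the rest of the argument is a direct invocation of Theorem 10.5(2) together with the framed-dual observation already emphasized in the excerpt.
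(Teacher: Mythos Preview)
Your proposal is correct and follows essentially the same route as the paper: the corollary's ``proof'' in the paper is precisely the discussion preceding it (hence the bare $\square$), namely the setup $W=M\smallsetminus\nu(G)$, $V=D^2\times D^2$, the thickening of $R,R'$ to $h,h'$, the invocation of Theorem~10.5(2) together with the observation that $R$ is not s-characteristic, and the reversal of the thickening to obtain $C$ with the built-in level-wise dual $G$. Your one genuine addition is the direct argument for the easy direction (concordance $\Rightarrow \fq=0$) via the embedded track of $H$; the paper instead leans on the ``if and only if'' already packaged into Theorem~10.5(2), but your computation is correct and arguably more transparent. You are also right to flag the identification $\fq(h,h')=\fq(R,R')$ as the step needing the most care --- the paper asserts this identification without further comment, so your caution there is well placed rather than a gap.
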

By the following result, which will be proven using Morse theory for the 3-manifold $S^2 \times I$ and only basic lemmas from this paper, the Freedman--Quinn invariant completely detects isotopy in this setting:
\begin{thm}\label{thm:concordance implies isotopy}
Given a concordance $C: S^2 \times I \hra M \times  I$ between $R$ and $R'$ which has $G$ as a geometric dual in every level $t\in I$ as in the above corollary, it follows that $R$ and $R'$ are isotopic.
\end{thm}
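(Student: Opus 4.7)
The plan is to use ambient Morse theory on the concordance $C$, with the fiberwise dual $G \times I$ providing the geometric input needed to cancel all critical points of the height function. Let $p_I\colon M \times I \to I$ denote projection and set $h := p_I \circ C\colon S^2 \times I \to I$. The fiberwise dual hypothesis $C(z_0,t) \in G \times \{t\}$ forces $h(z_0,t) = t$, so $h$ has no critical points along the arc $z_0 \times I$ and satisfies $h^{-1}(\{0,1\}) = S^2 \times \{0,1\}$.

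First, ambient-isotope $C$ in $M \times I$ rel $M \times \{0,1\}$ (keeping the fiberwise dual) so that $h$ is Morse. Small ambient moves can then eliminate any interior critical points of index $0$ or $3$ by standard creation/cancellation arguments, leaving only interior critical points of index $1$ and $2$ on the $3$-dimensional cobordism $S^2 \times I$. Since $H_\ast(S^2 \times I, S^2 \times 0) = 0$, these critical points come in birth-death cancellable pairs, all disjoint from $z_0 \times I$.

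The heart of the argument is cancelling each index-$1$/index-$2$ pair $(p_-, p_+)$ ambiently in $M \times I$. Pick any regular value $t \in (h(p_-), h(p_+))$; the cross-section $\Sigma_t := C(S^2 \times I) \cap (M \times \{t\})$ is a closed connected surface of positive genus in $M$ having $G \times \{t\}$ as a geometric dual at $z := C(z_0,t)$. Exactly as in the proof of Lemma~\ref{lem:W-move-equals-surgery-on-tube-cap}, take a framed meridional disk $d$ to $\Sigma_t$ dual to the compressing curve of $\Sigma_t$ associated with the handle to be cancelled, and tube $d$ into a parallel copy of $G$ along an embedded arc in $\Sigma_t$. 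This yields a framed embedded compressing disk $c_G$ for $\Sigma_t$ with interior disjoint from $\Sigma_t$. Surgery on $\Sigma_t$ along $c_G$ matches the level surfaces just above $p_+$ and just below $p_-$, so $c_G$ is ambient cancellation data for the pair: an isotopy of $M \times I$ supported near $C \cap (M \times [h(p_-), h(p_+)])$ and disjoint from $G \times I$ eliminates the pair.

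Iterating cancels all critical points of $h$, so after an ambient isotopy of $M \times I$ rel boundary the concordance $C$ becomes a level-preserving embedding $(x,t) \mapsto (R_t(x), t)$, exhibiting an isotopy $\{R_t\}$ from $R$ to $R'$ in $M$. The main technical obstacle is compatibility across successive cancellations: the compressing disks $c_G$ at different levels must be chosen so that cancelling one pair does not obstruct another or disturb the fiberwise dual structure. This is handled uniformly by the disjoint Norman-tubing constructions of Section~\ref{sec:preliminaries}, which exploit the fact that $G \times I$ supplies an unbounded family of pairwise disjoint parallel dual spheres at every level, letting us realize each cancellation within an arbitrarily small neighborhood of $C \cap (M \times [h(p_-), h(p_+)])$ disjoint from the previously-used cancellation regions.
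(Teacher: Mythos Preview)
Your overall strategy---ambient Morse theory on $h=p_I\circ C$---is the paper's, but the cancellation of index-$1$/index-$2$ pairs is where the real content lies, and your argument there has a genuine gap. The level surface just below $p_-$ is $\Sigma_t$ surgered along the $1$-handle cocore $c_1$, while the level just above $p_+$ is $\Sigma_t$ surgered along the $2$-handle core $c_2$; cancelling the pair ambiently means showing these two surgeries give isotopic surfaces in $M$. Your single cap $c_G$ cannot accomplish this: Lemma~\ref{lem:capped-surface-isotopy} only lets you trade surgery along $c_G$ for surgery along a cap \emph{dual} to $c_G$ with disjoint interior, but $c_G$ cannot be geometrically dual to both $c_1$ and $c_2$ (on the relevant genus piece no curve is dual to both members of a dual pair), and in any case the interiors of $c_1$ and $c_2$ may themselves intersect in $M$. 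So the sentence ``Surgery on $\Sigma_t$ along $c_G$ matches the level surfaces just above $p_+$ and just below $p_-$'' is unjustified.

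There is also a prior gap: you assert the $1$- and $2$-handles ``come in birth-death cancellable pairs,'' but the homological cancellation from $H_\ast(S^2\times I,S^2\times 0)=0$ only gives \emph{algebraic} intersection $\delta_{ij}$ between the $\partial c_i$ and $\partial c'_j$ on the middle surface; arranging this geometrically is nontrivial. The paper handles both issues at once by regarding the two cap systems as a genus-$g$ Heegaard splitting of $S^3$, invoking Waldhausen and Reidemeister--Singer to slide caps until boundaries meet geometrically $\delta_{ij}$, and then proving a separate result (Lemma~\ref{lem:disjoint-caps}) that uses $G$ to remove interior intersections between the two cap systems without changing the isotopy class of either surgery. (A smaller point: eliminating index-$0$ and index-$3$ critical points is also not ``standard'' ambiently, since the cancelling $1$-handle cores may wind around the $0$-handle cocore $3$-balls in $M$; the paper's Lemma~\ref{lem:0-1-handles} already needs $G$ and Proposition~\ref{prop:common-arcs-w-moves} for this step.)
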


\begin{proof}
We now show how to directly turn the concordance $C$ into an isotopy using the geometric duals.
By general position, we may assume that the composition $p_2\circ C: S^2 \times I \to I$ is a Morse function. If it has no critical points then $C$ is the track of an isotopy, so we'll study the critical points of $p_2\circ C$ by Morse theory. Extend a gradient-like vector field on $S^2 \times I$ to $M \times I$ that has no additional critical points and flows downwards in the $I$-direction away from the image of $C$. 
Then the relative local models for $(M \times I, C(S^2 \times I))$ at the critical points, corresponding to the $k$-handles of the 3-manifold $S^2 \times I$, $k=0,1,2,3$, are well known. 
In \cite[Lem.8]{BT} it is proven by a dimension count for ascending and descending manifolds of the vector field that by an ambient isotopy of $M \times  I$ one can order the critical points according to their index. 

Moreover, one can also re-order critical points of the {\it same} index arbitrarily which can be seen as follows, say in the case of 1-handles: The core of a 1-handle (in the 3-manifold $S^2 \times I$) is an arc, whereas the cocore is a 2-disk. If we have two adjacent 1-handles, one just below a level $M:=M \times t$ and the other just above $M$, then we can push the cocore up and the core down into that ``middle'' level $M$. By general position, this 1-manifold and 2-manifold will not intersect in the ambient 4-manifold $M$ and hence we can push the upper 1-handle below the lower one. 

As a consequence, we can assume that our Morse function on $S^2 \times I$ first has $n$ minima (0-handles) which are then abstractly cancelled by $n$ 1-handles: Each 0-handle must be abstractly cancelled eventually and we can slide those cancelling 1-handles below the other 1-handles.  Looking at the top, $m$ maxima arise that are abstractly cancelled by $m$ 2-handles. 
The remaining 1-handles have both their feet on $R$ by construction, similarly for the 2-handles read in the other direction.

The remaining 1- and 2-handles form a third cobordism which must be diffeomorphic to $S^2\times I$ since gluing $S^2 \times I$ to its top and bottom gives the entire cobordism $S^2 \times I$. 

More precisely, we can find two non-critical levels $t_1 < t_2$ in $(0,1)$ such that $C^{-1}(M \times  \{t_i\})$ are spheres which separate the domain $S^2 \times I$ of $C$ into three product cobordisms: 
\[
V_i := C^{-1}(M \times  [t_i, t_{i+1}]), \ i=0,1,2  \text{ and } t_0:= 0, t_3:=1.
\]
Here $V_i\cong S^2 \times I$ consists of the $i$- and $(i+1)$-handles discussed above. The proof of Theorem~\ref{thm:concordance implies isotopy} will be completed by the subsequent lemmas which show that each of the three restrictions of $C$ to $V_i$ can be turned into an isotopy, using the geometric dual $G$.
\end{proof}

For $V_0$, the $t$-parameter gives a movie in $M$ that starts with $R$ and then shows $n$ trivial spheres $S_1,\dots,S_n$ being born in $M$, one for each $0$-handle. Then $n$ tubes form, one for each $1$-handle, that connect $R$ to each $S_i$, making the result a new sphere $R_w$ in $M$. Here $w$ is a collection of $n$ words in the free product $\pi_1M \ast F_n\cong\pi_1(M\setminus\cup_iS_i)$, where $F_n$ is the free group generated by the meridians $m_i$ to $S_i$, and the words $w_k$ in $w$ measure how the core arcs $a_k$ of the $1$-handles hit the cocore $3$--balls $B_i$ of the $0$-handles bounded by the $S_i$ in $M$: Each intersection point with $B_i$ reads out the letter $m_i$, whereas the letters from $\pi_1M$ arise from the arcs that are in between such intersection points. The arcs can be turned into based loops after picking whiskers from each $S_i$ to the base-point of $M$. The argument below does not depend on those choices. 

These cocores and cores originally lie in $M \times [0,t_1]$ but we pushed the cocores up and the cores down into a common middle level $M=M \times t_1/2$. By the above reordering argument, the collection $\cC$ of cocores is embedded disjointly into $M$ and similarly, the collection  $\cC'$ of cores is also embedded disjointly. However, these 3-- and 1--manifolds can intersect each other in the 4-dimensional middle level $M$, so the abstract handle cancellation can a priori not be done ambiently in $M$.

\begin{lem}\label{lem:0-1-handles}
The sphere $R_w$ is isotopic to $R$ in any neighborhood of $R\cup \cC\cup\cC'\cup G$ in $M$.
\end{lem}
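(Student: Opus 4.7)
The plan is to prove the statement by a double induction: first on $n$, then on the total intersection count $I := \sum_{i\neq j} |c'_j \cap \int(B_i)|$ of arcs with interiors of non-matching cocores.

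\emph{Base cases.} If $n=0$ then $R_w=R$. If $n\geq 1$ but $I=0$, each ball $B_i$ meets no other arc $c'_j$ in its interior, so the simultaneous retraction of each $S_i$ through $B_i$ is realized by pairwise-disjoint ambient isotopies supported in arbitrarily small neighborhoods of $B_i\cup T_i$ inside the given neighborhood of $R\cup \cC\cup\cC'\cup G$. Each retraction drags $T_i$ down onto $R$, and the composite isotopy takes $R_w$ to $R$.

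\emph{Inductive step on $I$.} Given an intersection $p\in c'_j\cap \int(B_i)$ with $i\neq j$, I would produce an isotopy of $R_w$ that reduces $I$ by at least one, leaving $n$ fixed. The tube $T_j$ meets $B_i$ in a meridian circle $\gamma_p$ that bounds a small embedded disk $d_p\subset B_i$. Using the dual sphere $G$, I would construct an alternative cap $\widetilde{d}_p$ for $\gamma_p$ by tubing a meridional disk to $T_j$ (taken just outside $B_i$) into a parallel copy of $G$ along an arc in $R_w$ to $z=R\cap G$; the cleaning is performed as in the proof of Lemma~\ref{lem:choice-of-disks-exists}, using Remark~\ref{rem:partial-collection} to avoid disturbing previously constructed caps. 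An analogue of Lemma~\ref{lem:capped-surface-isotopy}, applied to a genus-one surface obtained by adding an auxiliary parallel tube at $p$ so that $\gamma_p$ becomes non-separating, then implies that exchanging $d_p$ for $\widetilde{d}_p$ is realized by an ambient isotopy of $R_w$ supported in a neighborhood of $R_w\cup d_p\cup\widetilde{d}_p$. This isotopy replaces the segment of $T_j$ passing through $B_i$ near $p$ by a detour through a parallel of $G$, reducing $I$ by one and not increasing $I$ at any other intersection since the detour can be taken arbitrarily close to $G$.

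The \emph{main obstacle} is translating this ``capped-surface detour'' into a genuine ambient isotopy rather than a surgery that would disconnect the sphere $R_w$; the circle $\gamma_p$ is separating on $R_w$, so the auxiliary-tube trick together with Lemma~\ref{lem:capped-surface-isotopy} is essential for keeping $R_w$ connected throughout. A secondary technical point is verifying that all tubings into parallels of $G$ can be arranged inside the prescribed neighborhood of $R\cup \cC\cup\cC'\cup G$ in $M$; this follows by taking the parallel copies of $G$, the tubing radii, and the thickening of each guiding arc arbitrarily small, together with the disjointness arguments used in the proof of Lemma~\ref{lem:choice-of-disks-exists}.
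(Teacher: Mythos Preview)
Your approach differs from the paper's and has a genuine gap at its key step.

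The paper's argument is short: to remove one occurrence of a meridian letter $m_i$ from $w$, perform a finger move on $R_w$ (creating two self-intersections), observe that the two obvious Whitney disks --- $W$ undoing the finger move and $W'$ pulling the tube past the cocore ball --- share a common $\beta$-arc, and invoke Proposition~\ref{prop:common-arcs-w-moves}. This immediately gives $R_w$ isotopic to $R_{w'}$ with one fewer letter; iterating leaves $w_0\in\pi_1M$, and then the cocore balls supply the final isotopy.

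Your approach instead tries to reroute the tube $T_j$ through $G$ by a direct cap-swap. The obstacle you identify is real: the meridian $\gamma_p$ is separating on the sphere $R_w$, so your two disks $d_p$ and $\widetilde{d}_p$ have the \emph{same} boundary, and Lemma~\ref{lem:capped-surface-isotopy} --- which concerns \emph{dual} caps whose boundaries meet transversally in one point --- says nothing about this situation. Your proposed fix, adding an ``auxiliary parallel tube'' to make $\gamma_p$ non-separating, is where the argument breaks down. Adding a tube to $R_w$ is a surgery, not an isotopy; it produces a genuinely different surface $F$. To make the argument go through you would need to (i) exhibit a clean cap $e$ for the dual curve on $F$ whose interior is disjoint from both $d_p$ and $\widetilde{d}_p$, (ii) verify that surgering $F$ along $e$ recovers $R_w$, and (iii) verify that surgering $F$ along $\widetilde{d}_p$ yields the desired rerouted sphere. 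None of this is carried out, and (i) is delicate: if the auxiliary tube is truly parallel to $T_j$ at $p$, the natural cap for its core runs through $B_i$ and meets $d_p$. Filling these steps in carefully essentially reconstructs the content of Proposition~\ref{prop:common-arcs-w-moves}, so nothing is gained over the paper's one-line appeal to it.

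There is also a smaller gap in your base case: your count $I$ only tracks intersections $c'_j\cap\int(B_i)$ with $i\neq j$, but after pushing cores and cocores to a common level in $M$ there is no reason $c'_j\cap\int(B_j)$ should be empty (beyond the one canonical endpoint on $\partial B_j$). If it is not, the retraction of $S_j$ through $B_j$ is obstructed by the tube $T_j$ itself, and the same reduction step is needed for $i=j$.
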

\begin{figure}[ht!]
        \centerline{\includegraphics[scale=.24]{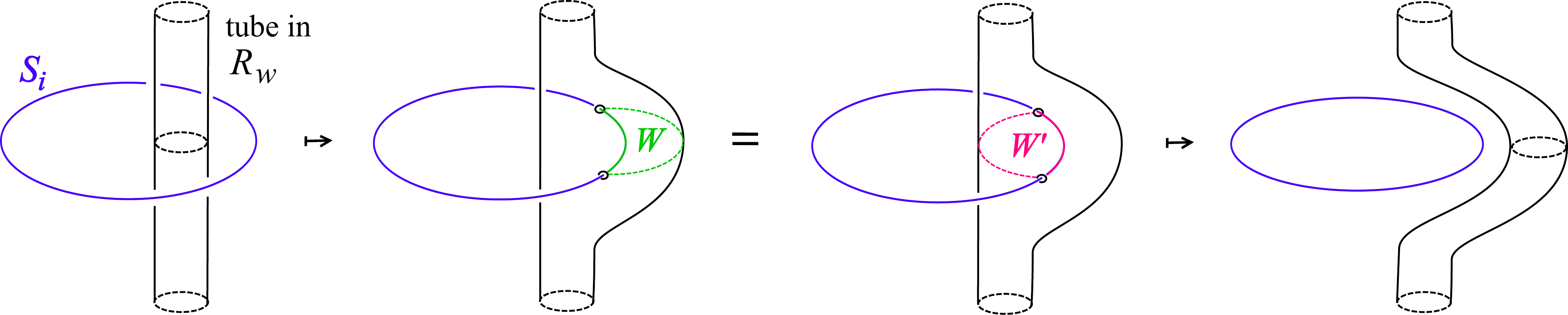}}
        \caption{Pushing core arcs out of cocore 3-balls}
        \label{fig:pullout-tube}
\end{figure}
\begin{proof}
Figure~\ref{fig:pullout-tube} shows how we can reduce the number of occurences of the meridian $m_i$ in $w$. This is a finger move and then a Whitney move on $R_w$, and as usual we see two Whitney disks, $W$ going back to $R_w$ by the inverse of the finger move and $W'$ going forward. These Whitney disks share a boundary arc $\beta$, and by Proposition~\ref{prop:common-arcs-w-moves} it follows that $R_w$ is isotopic to the result $R_{w'}$ of the Whitney move along $W'$, with $w'$ containing one letter $m_i$ fewer then $w$. Iterating this procedure we see that $R_w$ is isotopic to $R_{w_0}$ where $w_0\in\pi_1M$. This means that the 1-handles for $R_{w_0}$ do not intersect the cocore $3$--balls for the $0$-handles. These $3$--balls  and copies of $D^2\times D^1$ inside the tubes then provide the final isotopy from $R_{w_0}$ to $R$. 
\end{proof}

Applying the same arguments of Lemma~\ref{lem:0-1-handles} to $V_2$ turned upside down shows that the restriction of $C$ to $V_2$ can be replaced by an isotopy. So it just remains 
to show that the restriction of $C$ to $V_1$ can be replaced by an isotopy. 

The $t$-parameter movie for $V_1$ starts with the sphere $R$ at $t=t_1$ then $g$ tubes form, one for each remaining 1-handle in $V_1$. We then see a surface $F$ of genus $g$ in the middle level $M$ in which the collection $\cC$ of cocores is also embedded. These are 2--disks, or better, a collection of $g$ disjoint caps (section~\ref{sec:capped-surface-w-move}) attached to a half-basis of disjointly embedded simple closed curves in $F$. The movie continues with $g$ 2-handles being attached to $F$ whose cores form a second collection of caps $\cC'$, again embedded disjointly into the middle level $M$. 

\begin{lem}\label{lem:1-2-handles}
The sphere $R$ is isotopic to $R'$  in any neighborhood of $F\cup\cC\cup\cC'\cup G$ in $M$.
\end{lem}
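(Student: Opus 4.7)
The plan is to use the geometric dual $G$ to arrange the two cap systems $\cC$ and $\cC'$ into a dual pair of cap collections in the sense of Section~\ref{sec:capped-surface-w-move}, and then apply Lemma~\ref{lem:capped-surface-isotopy} to immediately conclude that $R = F_\cC$ is isotopic to $R' = F_{\cC'}$ inside a neighborhood of $F\cup\cC\cup\cC'\cup G$.

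First I would check that the boundary curves $\partial\cC$ and $\partial\cC'$ already form a dual system on $F$. Since $\cC$ consists of cocores of the $1$-handles and $\cC'$ of cores of the cancelling $2$-handles in the product cobordism $V_1\cong S^2\times I$, abstract handle cancellation forces $\partial c_i$ and $\partial c'_j$ to meet geometrically in $\delta_{ij}$ points on $F$, and also forces the ambient algebraic intersection $c_i\cdot c'_j$ in $M$ to equal $\delta_{ij}$. After a small isotopy of cap boundaries on $F$ making all crossings on $F$ transverse and minimal, the remaining off-$F$ interior intersections between $\cC$ and $\cC'$ therefore come in algebraically-cancelling $\pm$ pairs.

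Next I would pair up these interior intersections and pass to Whitney disks. For each pair $\{r^+,r^-\}\subset c_i\cap c'_j$ of opposite-sign interior intersections, I choose disjoint Whitney arcs in $c_i$ and $c'_j$ joining $r^+$ to $r^-$; their union is an embedded null-homotopic Whitney circle in the simply connected set $c_i\cup c'_j\subset M$. Following the clean-up recipe in the proof of Lemma~\ref{lem:choice-of-disks-exists} -- start with any disks bounded by the Whitney circles, push interior intersections down into $F$ by finger moves, tube the resulting $F$-intersections into parallel copies of $G$ along pairwise disjoint arcs in $F$, and boundary-twist to correct framings -- each Whitney circle extends to a clean Whitney disk in the complement of $F\cup G\cup\cC\cup\cC'$, with all such disks pairwise disjoint.

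Now I would perform the Whitney moves simultaneously. Each move is an isotopy of one of the caps $c_i$ or $c'_j$ supported in a regular neighborhood of its clean Whitney disk, hence disjoint from $F$. In particular every boundary circle on $F$ is fixed, so the surgery outcomes $F_\cC\cong R$ and $F_{\cC'}\cong R'$ are each preserved up to ambient isotopy, while the interior intersections of $\cC$ with $\cC'$ are all eliminated. At this point $\cC$ and $\cC'$ have pairwise disjoint interiors and boundaries forming a dual system on $F$, so they constitute a dual pair of cap collections. Lemma~\ref{lem:capped-surface-isotopy} then supplies the desired isotopy $R\cong R'$ supported in a regular neighborhood of $F\cup\cC\cup\cC'$, which is contained in any prescribed neighborhood of $F\cup\cC\cup\cC'\cup G$. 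The hardest step will be orchestrating the simultaneous clean-up so that tubings into $G$ for one Whitney disk do not reintroduce bad intersections with neighboring Whitney disks or with the caps themselves; this is handled by choosing pairwise disjoint guiding arcs in $F$ together with nested tube radii, exactly as in the arguments underlying Lemmas~\ref{lem:choice-of-disks-exists} and~\ref{lem:slide-w-disk-across-D-z}.
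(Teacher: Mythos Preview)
Your first step contains a genuine gap. You assert that ``abstract handle cancellation forces $\partial c_i$ and $\partial c'_j$ to meet geometrically in $\delta_{ij}$ points on $F$'', but the $1$- and $2$-handles in $V_1$ are \emph{not} in cancelling position. All you know is that $V_1\cong S^2\times I$ abstractly; this only forces the boundary map in the handle chain complex to be an invertible integer matrix, not the identity, and it says nothing about geometric intersection numbers on $F$. The paper is explicit that ``caps of different collections may intersect on their boundary (in $F$) as well as in their interiors'', and it spends real effort fixing this: the two cap systems give a Heegaard splitting of $S^3$, Waldhausen's uniqueness theorem identifies it with the standard one, and then the Reidemeister--Singer theorem on slide-equivalence of minimal disk systems is used to realize the $\delta_{ij}$ boundary pattern via ambient handle slides of the caps. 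This $3$-manifold input is essential and is missing from your argument; the same objection applies to your claim that the ambient algebraic intersection $c_i\cdot c'_j$ in $M$ equals $\delta_{ij}$.

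Even granting the $\delta_{ij}$ boundary pattern, your Whitney-disk cleanup does not work as written. The Whitney circles lie on $c_i\cup c'_j$, not on $F$, so ``pushing interior intersections down into $F$'' is not available, and tubing into $G$ (which is dual to $F$, not to the caps) does nothing to remove intersections of your Whitney disks with the other caps in $\cC\cup\cC'$. The paper sidesteps Whitney disks altogether: for each $c'_j$ it builds a geometric dual sphere $S_j$ by surgering a normal torus over $\partial c_j$ along a meridional cap tubed into $G$, and then tubes $c_i$ into $S_j$ to kill each interior intersection. The nontrivial part is verifying that this tubing does not change $F_\cC$ up to isotopy, which is done via an explicit $3$-ball bounded by an alternate surgery $S_j^\gamma$ of the same torus.
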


\begin{proof}
By construction, we have a genus $g$ surface $F\subset M$, together with a collection $\cC$ of $g$ caps such that surgery leads to $R$, and another collection $\cC'$ of $g$ caps for $F$ that surger it to $R'$. The caps in each collection are embedded in $M$, and disjoint from all other caps in the same collection, but caps of different collections may intersect on their boundary (in $F$) as well as in their interiors.  

There are two handle-bodies $Y$ and $Y'$ formed from $F \times [-\epsilon,\epsilon]$ by (abstractly) attaching thickened caps from $\cC$ to 
$F \times -\epsilon$, respectively $\cC'$ to $F \times \epsilon$, and then filling the resulting boundary with two $3$--balls. 
This is a Heegaard decomposition of $S^3$ to which we will next apply some classical $3$--manifold results to simplify the intersection pattern in $F$ between the boundaries of the caps in $\cC$ and those in $\cC'$.

Waldhausen's uniqueness theorem for Heegaard decompositions of $S^3$ \cite{Waldhausen} gives a diffeomorphism of triples (isotopic to the identity -- but we won't use this here) 
\[
(S^3; Y, Y') \cong (S^3; Y_0, Y'_0)
\] 
where the subscript $0$ refers to the standard Heegaard decomposition, stabilized to be of the same genus as $Y$. In the following, we'll need the usual notion of {\it minimal systems of disks}, which are disjointly embedded disks that cut a handlebody into a $3$--ball. For $Y$, respectively $Y'$, such minimal systems are given by the caps in $\cC$, respectively $\cC'$. On the $(Y_0, Y_0')$-side these are standard disks in the sense that their boundaries 
intersect 
$\delta_{ij}$ geometrically. By applying Waldhausen's diffeomorphism, we see that $Y$ and $Y'$ admit minimal systems of disks that also intersect $\delta_{ij}$ geometrically on the boundary.

A result of Reidemeister \cite{Reidemeister} and Singer \cite{Singer} from 1933 asserts that any two minimal systems of disks in a handlebody are slide equivalent. This implies that after finitely many handle slides among the abstract caps in $\cC$, respectively $\cC'$, we may assume that the collections of caps $\cC$ and $\cC'$ intersect $\delta_{ij}$ geometrically on the boundary. These handle-slides can be achieved ambiently in $M$ and we'll assume from now on that this has been done. This has the consequence that the complement in $F$ of the boundaries of the caps in $\cC$ and $\cC'$ is connected. In particular,  in the following arguments we may always find (disjoint) arcs in $F$ from any point in this complement to the intersection point of $F$ and $G$. 

If the interiors of all caps happen to be disjoint then Lemma~\ref{lem:capped-surface-isotopy} shows that the two surgeries $R$ and $R'$ are isotopic in $M$. We will complete our proof of Lemma~\ref{lem:1-2-handles} by showing the following general result.
\end{proof}

\begin{lem}\label{lem:disjoint-caps}
Let $F$ be a surface in a 4--manifold admitting a collection $\cC$ of disjoint caps $c_i$, and also admitting another collection $\cC'$ of disjoint caps $c_i'$, such that the $\partial c_i$ intersect the $\partial c'_i$  geometrically $\delta_{ij}$ in $F$.

If $F$ has a geometric dual $G$ which is disjoint from $\cC\cup \cC'$ then there exists a collection $\cC''$ with the same boundaries as $\cC$, which has no interior intersections with $\cC'$, and such that surgery on $\cC''$ is isotopic to surgery on $\cC$. 
\end{lem}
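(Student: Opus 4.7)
The plan is to induct on $N := \#(\cC \cap \cC')$, the total number of interior intersections between the two collections. When $N=0$ we take $\cC'' := \cC$. For the inductive step, pick an interior intersection $r \in c_i \cap c'_j$ and build a modified cap $c_i''$ with $\partial c_i'' = \partial c_i$, strictly fewer interior intersections with $\cC'$, and surgery result isotopic to $F_\cC$; iterating then kills all interior intersections.

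To build $c_i''$ I use the Norman trick of Section~\ref{sec:tubing-into-G}. Because $\partial\cC$ and $\partial\cC'$ form a symplectic-style basis of $H_1F$ (each collection is a disjoint system of simple closed curves, and the two collections cross as $\delta_{ij}$), cutting $F$ along their union yields a $4g$-gon whose interior, after a small perturbation of $G$, contains the dual point $z = F \cap G$. So there is an embedded arc $\alpha \subset c'_j$ from $r$ to a regular point $p \in \partial c'_j\setminus \bigcup_k \partial c_k$, extended by an arc $\alpha' \subset F$ from $p$ to $z$ avoiding $\partial\cC \cup \partial\cC'$ off $p$. I use $\alpha \cup \alpha'$ as a guiding arc to tube $c_i$ into a parallel copy $G'$ of $G$: the tube is a narrow normal disk bundle over $\alpha \cup \alpha'$, and it replaces a neighborhood of $r$ in $c_i$ by a detour that runs along $c'_j$ and then along $F$, terminating on $G'$. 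The resulting $c_i''$ is again a framed embedded cap with $\partial c_i'' = \partial c_i$, interior disjoint from $F$, and by the hypothesis $G \cap \cC' = \emptyset$ together with the arc choice and narrowness of the tube, the only effect on $\cC'$-intersections is that $r$ is eliminated and no new ones are created.

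The main obstacle is verifying that the surgery is preserved. Tubing $c_i$ into $G'$ adds a $G$-summand to the cap, and after surgery the two parallel copies of $c_i''$ sitting in $F_{\cC''}$ each contribute their own parallel copy of $G$, tubed in on opposite sides of $\partial c_i \subset F$. These two parallels of $G$ sit in the trivial normal bundle of $G$ with opposite orientations (they inherit opposite normal framings from the two sides of the surgery on $\partial c_i$), and they therefore cobound an embedded annulus in $\nu G$. The annulus provides a local standard model directly analogous to Figure~\ref{fig:capped-surface-surgery-1} and the proof of Lemma~\ref{lem:capped-surface-isotopy}, showing that the paired $G$-summands cancel by ambient isotopy, so $F_{\cC''}$ is isotopic to $R = F_\cC$. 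This surgery-preservation step is where I expect all the real work to lie; the arc choices and the Norman tubing itself are routine given the connectivity of the dual boundary pattern on $F$ and the hypothesis $G \cap (\cC \cup \cC') = \emptyset$.
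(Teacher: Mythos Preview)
Your approach differs substantially from the paper's, and the surgery-preservation step has a genuine gap.

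You tube $c_i$ directly into a parallel $G'$ of $G$. Your justification that $F_{\cC''}$ is isotopic to $F_\cC$ rests on the claim that the two parallel copies of $G'$ appearing in $F_{\cC''}$ ``cancel'' because they cobound a region in $\nu G$. But those two copies cobound $S^2\times I$, not an annulus, and more to the point this region alone does not produce an isotopy from $F_{\cC''}$ to $F_\cC$: what you actually need is, for each sign, a $3$--ball bounded by the sphere $E_\pm\cup (D_r)_\pm$ (the ``blip'' union the small disk it replaced) lying in the complement of the rest of $F_{\cC''}$. That sphere is isotopic to a parallel of $G$, and $G$ does \emph{not} bound a $3$--ball in $M\smallsetminus F_\cC$ --- it still meets $F_\cC$ transversely at $z$, which survives every surgery. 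Your appeal to Figure~\ref{fig:capped-surface-surgery-1} and Lemma~\ref{lem:capped-surface-isotopy} is misplaced: that lemma compares surgeries on \emph{dual} caps (boundaries meeting in one point), not two caps with the \emph{same} boundary differing by a $G$--summand.

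The paper avoids this by tubing $c_i$ into a different sphere $S_j$, not into $G$. Here $S_j$ is obtained from the torus $T_j$ of normal circles over $\partial c_j$ (the cap in $\cC$ dual to $c'_j$) by surgering along a meridional cap $d_G$ tubed into $G$. The tube guiding arc then lies entirely in the interior of $c'_j$, from $p$ to the single point $q=S_j\cap c'_j$. The crucial feature is that $S_j$ is isotopic (via Lemma~\ref{lem:capped-surface-isotopy}, applied to the dual caps $d_G$ and $\gamma_j$ on $T_j$) to a sphere $S_j^\gamma$ that bounds a $3$--ball $B_j^\gamma\subset\nu_{c_j}$ whose only intersection with $F$ is the circle $\partial c_j$. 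Since surgery on $\cC$ deletes a neighborhood of $\partial c_j$, the ball $B_j^\gamma$ becomes disjoint from $F_\cC$, and one can shrink the $S_j^\gamma$--summands through it to recover $F_\cC$ from $F_{\cC''}$. This bounding-ball property is precisely what your $G'$ lacks, and it is the reason the paper's more elaborate construction is needed.
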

Recall that by definition (section~\ref{sec:capped-surface-w-move}) the interiors of all caps are embedded in the complement of $F$.
And in our current setting of the proof of Lemma~\ref{lem:1-2-handles} the geometric dual $G$ to $F$ is indeed disjoint from $\cC\cup \cC'$.
Note that Lemma~\ref{lem:capped-surface-isotopy} then implies that surgery on $\cC$ is also isotopic to surgery on $\cC'$, which we wanted to prove.

\begin{proof}
Our construction will eliminate each intersection point $p\in c_i\pitchfork c'_j$ for $c_i\in\cC$ and $c'_j\in\cC'$ by tubing $c_i$ into a dual sphere $S_j$ to $c'_j$. This does not change $F_{\cC'}$ since $\cC'$ is fixed, and it will be checked that the tubing of the $c_i$ into the $S_j$ does not change $F_\cC$ up to isotopy. 

\begin{figure}[ht!]
        \centerline{\includegraphics[scale=.275]{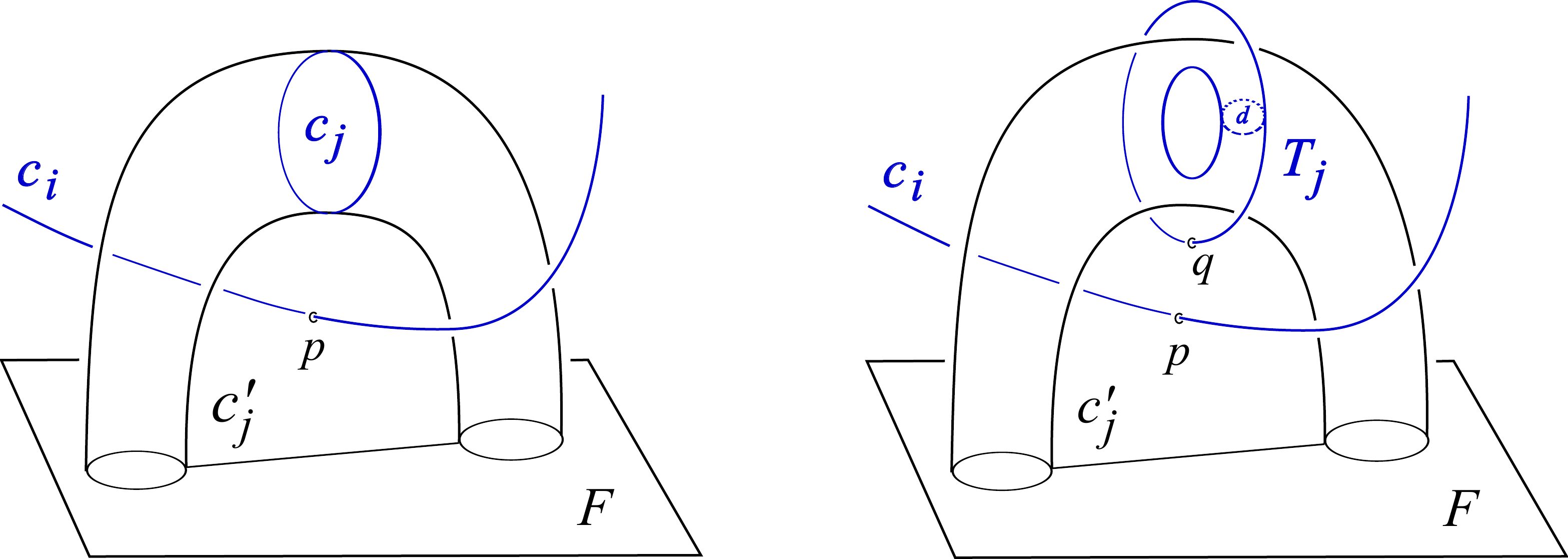}}
        \caption{Left: An intersection $p\in c_i\pitchfork c'_j$. Right:  A torus $T_j$ of normal circles over $\partial c_j$ with $T_j\cap c'_j=\{q\}$.}
        \label{fig:cap-clean-up-1}
\end{figure}

We first describe the easiest case where $\cC\pitchfork \cC'$ is a single interior intersection $p\in c_i\pitchfork c'_j$ for some $c_i\in\cC$ and $c'_j\in\cC'$ with $i\neq j$ (Figure~\ref{fig:cap-clean-up-1}, left). By assumption there exists a cap $c_j\in\cC$ whose boundary $\partial c_j$ intersects $\partial c'_j$ in a single point. A torus $T_j$ of normal circles to $F$ over $\partial c_j$ intersects the interior of $c'_j$ in a single point $q$ (Figure~\ref{fig:cap-clean-up-1}, right). 
Let $d$ be a meridional disk to $F$ bounded by a circle in $T_j$, and denote by $d_G$ the result of tubing $d$ into $G$ to eliminate the intersection between $d$ and $\partial c_j$ (as in Figure~\ref{fig:tube-caps-2} but here $\partial d\subset T_j$). Then surgering $T_j$ along $d_G$ yields a $0$-framed embedded sphere $S_j$ with $q=S_j\cap c'_j$, such that $S_j$ is disjoint from all other caps in $\cC'$, and $S_j$ is disjoint from all caps in $\cC$ (Figure~\ref{fig:cap-clean-up-2}, left). So the intersection $p$ can be eliminated by tubing $c_i$ into $S_j$ along a path between $p$ and $q$ in $c_j$ (Figure~\ref{fig:cap-clean-up-2}, right).

\begin{figure}[ht!]
        \centerline{\includegraphics[scale=.275]{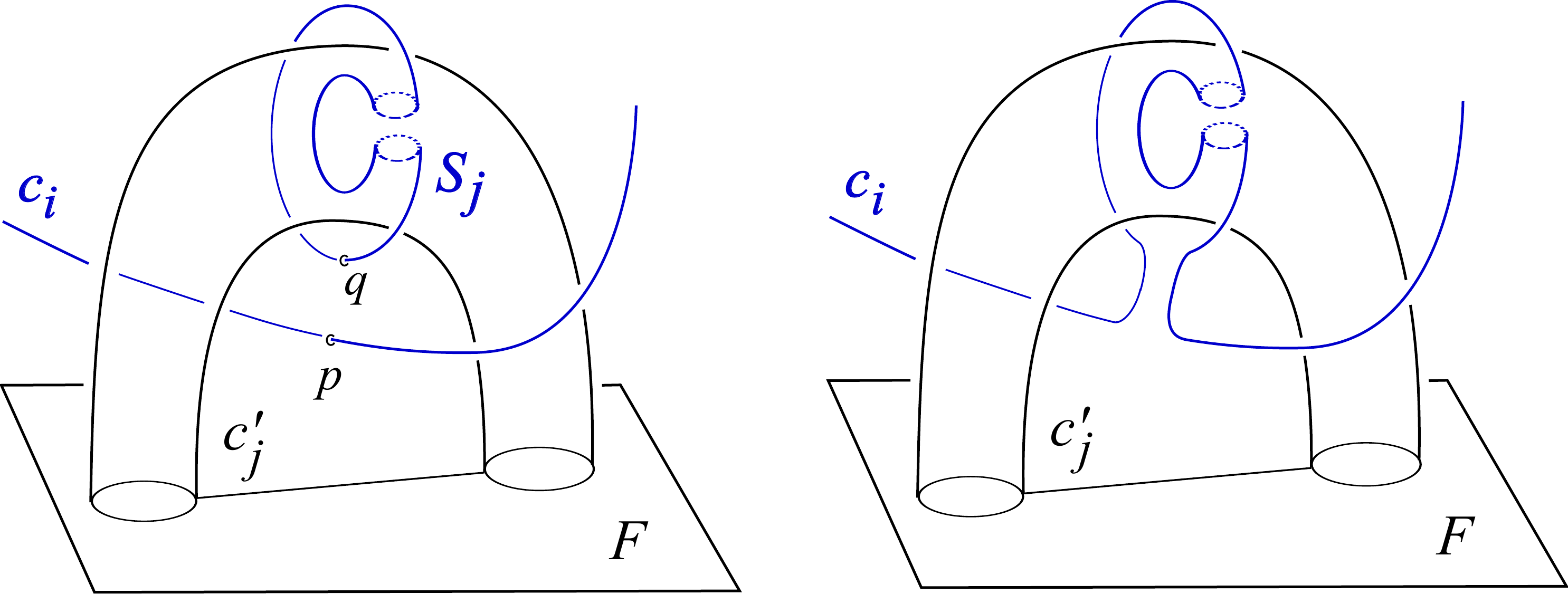}}
        \caption{Left: The sphere $S_j$ with $S_j\cap F=\{q\}$. Right: The result of tubing $c_i$ into $S_j$ to eliminate $p$ and $q$.}
        \label{fig:cap-clean-up-2}
\end{figure}
At this point we have eliminated $p\in c_i\pitchfork c'_j$ by replacing $c_i$ with the connected sum $c''_i:=c_i\# S_j$ of $c_i$ with $S_j$ to get a new collection of caps $\cC''$ with the same boundaries as $\cC$ but with interiors disjoint from $\cC'$. We want to check that $F_\cC$ is isotopic to $F_{\cC''}$.
Note that $T_j$ also admits a cap $\gamma_j$ formed from $c_j$ by deleting a small collar. (The boundary of $\gamma_j$ is visible in the right side of Figure~\ref{fig:cap-clean-up-1} as the ``inner longitude'' of $T_j$.) This cap $\gamma_j$ is disjoint from $F$ and is dual to $d_G$, so it follows from the capped surface isotopy lemma (Lemma~\ref{lem:capped-surface-isotopy}) that the sphere $S_j^\gamma$ formed by surgering $T_j$ along $\gamma_j$ is isotopic to $S_j$ in the complement of $F$. So it suffices to check that $F_\cC$ is isotopic to $F_{\cC^\gamma}$, where the collection of caps $\cC^\gamma$ differs from the original $\cC$ by
replacing $c_i$ with $c_i\# S_j^\gamma$.

The sphere $S_j^\gamma$ is contained in the boundary of a tubular neighborhood $\nu_{c_j}\cong D^2\times D^2$ of $c_j$, and $S_j^\gamma$ bounds an embedded $3$--ball $B_j^\gamma\subset\nu_{c_j}$ which is the union of the solid torus $\partial c_j\times D^2$ with a $1$-dimensional sub-bundle over the interior of $c_j$. Observe that the only intersections between $B_j^\gamma$ and $F$ are the circle $\partial c_j$.

Now surger $F$ along $\cC^\gamma$ to get $F_{\cC^\gamma}$. Since surgery has deleted a regular $\epsilon$-neighborhood of $\partial c_j$ from $F$, the $3$--ball $B^\gamma_j$ is now disjoint from $F_{\cC^\gamma}$. So there exists an isotopy from $F_{\cC^\gamma}$ to $F_\cC$ supported near $B^\gamma_j$ which isotopes the two parallel copies of $c_i\# S_j^\gamma$ in $F_{\cC^\gamma}$ to the two parallel copies of $c_i$ in $F_\cC$ by shrinking the parallels of $S_j^\gamma$ in $B^\gamma_j$.

The description of how this construction can be carried out in the general case to simultaneously eliminate any number of intersections $p\in c_i\pitchfork c'_j$ among all the $c_i\in\cC$ and $c'_j\in\cC'$ is straightforward: Consider some $c'_j$ which has multiple interior intersections with multiple $c_i$ (in the left of Figure~\ref{fig:cap-clean-up-1} imagine more $p$-intersections). We will not introduce sub-index notation to enumerate the interior intersections in each $c'_j$, nor for the subsequent tori and spheres created for each intersection.
Take a torus $T_j$ as in the right of Figure~\ref{fig:cap-clean-up-1}
around a parallel copy of $\partial c_j$ for each interior intersection. (Note that these parallels of $\partial c_j$ and their corresponding disjoint normal tori can be assumed to be supported arbitrarily close to $\partial c_j$, ie.~in the part of $F$ that will be deleted by surgery -- this observation is key to why the general case will present no new difficulties.) Just as above, these tori can be surgered to spheres $S_j$ disjoint from $F$ which are dual to $c'_j$ using caps $d_G$ on the $T_j$ in the complement of $F$ created by tubing meridional disks into $G$ along disjointly embedded arcs in $F$. These $S_j$ are all disjointly embedded by construction. Now all intersections between $c'_j$ and the $c_i$ can be eliminated by tubing the $c_i$ into the $S_j$ along disjointly embedded arcs in $c'_j$ between pairs of intersection points in $c_i\pitchfork c'_j$ and $S_j\cap c'_j$ (as in the right of Figure~\ref{fig:cap-clean-up-2}). Note that the case $i=j$ is allowed in this construction since the tori are supported near the parallel copies of $\partial c_j$ and the $S_j$ are disjoint from all $c_i$, so changing the interior of $c_j$ by tubing into an $S_j$ can be carried out just as for $c_i$ with $i\neq j$. Carrying out this construction for all $c'_j$ replaces $\cC$ with $\cC''$ such that $\cC''$ and $\cC'$ have disjoint interiors (with boundaries unchanged).

It remains to check that the argument from the easy case also applies to show that this construction which has changed the $c_i$ by multiple connected sums has not changed the result of surgery. As before, we can surger each of the $T_j$-tori along a cap $\gamma_j$ formed from a parallel of $c_j$ to get a sphere $S^\gamma_j$  which is isotopic in the complement of $F$ to the corresponding $S_j$. Here we are using parallels of the new $c''_j$ which may been tubed into some $S_k$'s, but the key properties of being framed, with interiors disjointly embedded in the complement of $F$ have been preserved. Since the $\gamma_j$-caps are dual to the $d_G$-caps, the $S_j^\gamma$-spheres are isotopic to the $S_j$-spheres in the complement of $F$, again
by the capped surface isotopy lemma (Lemma~\ref{lem:capped-surface-isotopy}). So again it suffices to check that $F_\cC$ is isotopic to $F_{\cC^\gamma}$
where the collection of caps $\cC^\gamma$ differs from the original $\cC$ by taking connected sums 
of the $c_i$ with multiple $S_j^\gamma$.

Similarly as before, the $S_j^\gamma$ are contained in the boundaries of disjoint tubular $D^2\times D^2$-neighborhoods of parallels of $c_j$, with each of these neighborhoods containing an embedded $3$--ball $B_j^\gamma$ bounded by $S_j^\gamma$ such that $B_j^\gamma$ and $F$ only intersect in the corresponding parallel copy of $\partial c_j$.
Surgering $F$ along $\cC^\gamma$ to get $F_{\cC^\gamma}$ deletes regular $\epsilon$-neighborhoods of all the $\partial c_j$ from $F$, 
and since we may assume that all the $T_j$-tori in the construction were supported near parallels of the $\partial c_j$ that lie inside these deleted 
$\epsilon$-neighborhoods,
all the $B^\gamma_j$-balls are disjoint from $F_{\cC^\gamma}$. So there exists an isotopy from $F_{\cC^\gamma}$ to $F_\cC$ supported near the $B^\gamma_j$ which isotopes the pairs of parallel copies of $c_i\# S_j^\gamma$ in $F_{\cC^\gamma}$ to the pairs of parallel copies of $c_i$ in $F_\cC$ by shrinking the parallels of $S_j^\gamma$ in $B^\gamma_j$.    
\end{proof}


\end{document}